\documentclass[12pt,reqno]{amsart}

\usepackage{amssymb,bm,mathrsfs}
\usepackage{mathabx} 
\usepackage{enumerate}
\usepackage{tikz}
\usepackage[centering,width=6in]{geometry}
\usepackage[colorlinks,pdfstartview=FitH]{hyperref}

\hypersetup{
    colorlinks,
    citecolor=blue,
    filecolor=black,
    linkcolor=red,
    urlcolor=black
}

\theoremstyle{plain}
\newtheorem{theorem}{Theorem}[section]
\newtheorem{proposition}[theorem]{Proposition}
\newtheorem{lemma}[theorem]{Lemma}

\theoremstyle{definition}

\theoremstyle{remark}

\numberwithin{equation}{section}

\newcommand{\Tr}{\operatorname{Tr}}

\newcommand{\B}{\mathcal{B}}

\newcommand{\N}{\mathbb{N}}
\newcommand{\F}{\mathbb{F}}
\newcommand{\Z}{\mathbb{Z}}
\newcommand{\R}{\mathbb{R}}
\newcommand{\C}{\mathbb{C}}

\newcommand{\abs}[1]{\left|#1\right|}
\newcommand{\Conv}{\mathop{\scalebox{1.5}{\raisebox{-0.2ex}{$\Asterisk$}}}}%
\newcommand{\wh}{\widehat}

\DeclareFontFamily{U}{mathx}{}
\DeclareFontShape{U}{mathx}{m}{n}{<-> mathx10}{}
\DeclareSymbolFont{mathx}{U}{mathx}{m}{n}
\DeclareMathAccent{\widehat}{0}{mathx}{"70}
\DeclareMathAccent{\widecheck}{0}{mathx}{"71}

\usepackage{graphicx} 

\title[A new type of deterministic Salem sets and its spectrality]{A new type of deterministic Salem sets \\ and its spectrality}

\author{Chun-Kit Lai}
\address{San Francisco State University Department of Mathematics, 1600 Holloway Ave, San Francisco, CA 94132
}
\curraddr{}
\email{cklai@sfsu.edu}
\thanks{}

\author{Ruxi Shi}
\address{Shanghai Center for Mathematical Sciences, Fudan University, 200438 Shanghai, China}
\email{ruxishi@fudan.edu.cn}

\author{Yu-Hao Xie}
\address{Department of Stochastics, Institute of Mathematics, Budapest University of Technology and Economics, M\H{u}egyetem rkp.~3., H-1111 Budapest, Hungary}
\email{123yh.xie@gmail.com}
\begin{document}

\keywords{Deterministic Salem sets, Spectral measures, Moran sets}

\begin{abstract}
For all $0< s \le  1$, we provide a new deterministic construction of Cantor sets whose Fourier dimension and Hausdorff dimension are both equal to $s$. The construction is based on a straightforward Cantor-Moran construction with contraction ratios given by reciprocals of integers. The key tool to obtain the fast Fourier decay is due to the Weil bound in analytic number theory. Furthermore,  we show that the  natural equal-weighted  Cantor-Moran measure is the desired measure admitting the near optimal Fourier decay and the measure admits an exponential orthonormal  basis $\{e^{2\pi i \lambda x}: \lambda\in \Lambda\}$ for its $L^2$ space. This gives the first examples of  singular Salem spectral measures in $\R^1$.
\end{abstract}

\maketitle
\tableofcontents

\section{Introduction}

\subsection{Moran sets.} Let $\{N_n\}_{n=1}^{\infty}$ be a sequence of integers greater than or equal to $2$ and let $\B_n$ be a non-empty subset of $\{0,1,\cdots, N_n-1\}$. The (integral) {\bf homogeneous Moran set} generated by the sequence of pairs $\{(N_n,\B_n)\}_{n=1}^{\infty}$ is the unique compact set $ K $ such that 
\begin{equation}\label{eq_Moran}
K =K(\{N_n,\B_n\}) = \left\{\sum_{n=1}^{\infty} \frac{d_n}{N_1\cdots N_n}: d_n\in\B_n\right\}.
\end{equation}
We will write $K$ if $N_n, \mathcal{B}_n$ are clear from the context. Homogeneous Moran sets form a natural generalization of the classical middle-third Cantor set (where $N_n = 3$ and $\B_n = \{0,2\}$ for all $n \ge 1$) and deleted-digit self-similar Cantor sets where all $N_n$ and $\B_n$ are the same for all $n$. Homogeneous Moran sets have a long history of study and they have many applications in different areas of fractal geometry (see e.g. \cite{FengWenWu1997,Feng2012SalemBernoulli}). 

Homogeneous Moran sets have also appeared naturally in harmonic analysis since  Moran sets in (\ref{eq_Moran}) naturally support a \textbf{Cantor-Moran measure} $\mu$ having a readily computable Fourier transform formula. The associated Cantor-Moran measure $\mu$ is defined by the following infinite convolution 
\begin{equation}\label{cantor-moran_measure}
    \mu = \Conv_{ n = 1 }^{ \infty } \left( \sum_{ d \in \mathcal{B}_n } \dfrac{1}{ \# \mathcal{B}_n }\delta_{ d \cdot ( N_1 \cdots N_n )^{-1} } \right),
\end{equation} 
where $ \delta_a $ represents the Dirac measure concentrated at the point $ a\in \mathbb{R} $.  Its Fourier transform is given  by an infinite product of trigonometric polynomials. 
 \begin{equation}\label{eq-FT}
 \widehat{\mu} (\xi) = \prod_{n=1}^{\infty} {\mathsf M}_{\B_n} ((N_1\cdots N_n)^{-1}\xi), \ \mbox{where} \ {\mathsf M}_{\B_n} (\xi) = \frac1{\#\B_n}\sum_{d\in\B_n} e^{-2\pi id \cdot \xi}. 
 \end{equation}
Throughout this paper, ${\mathsf M}_{\B_n}(\cdot)$ will be called the {\bf mask function} for $\B_n$. Recall that the Fourier transform of a finite Borel measure \(\mu\) on \(\mathbb R\) is defined by
$$
\widehat{\mu}(\xi ) = \int e^{-2\pi i  \xi \cdot x }~d\mu(x).
 $$
In this paper, we use the Cantor--Moran measure defined in \eqref{cantor-moran_measure} to answer two questions in the harmonic analysis of fractal measures.
\subsection{Salem sets} Let $\mu$ be a nonzero finite Borel measure. The  Fourier dimension of $\mu$ is defined to be 
\[
\dim_F\mu=\sup\left\{t\in[0,1]:\exists 0<C<\infty\text{ such that }|\widehat{\mu}(\xi)|\le C(1+|\xi|)^{-t/2}\ \forall\xi\in\R\right\}.
\]
For a Borel set $E\subset\R$, the Fourier dimension of $E$ is defined to be
\[
\dim_F E=\sup\{\dim_F\mu:\mu\text{ is a probability measure supported on }E\}.
\]
It is well-known that $\dim_FE\le \dim_HE$, where $\dim_HE$ is the Hausdorff dimension of $E$. The Borel set $ E $ is called a {\bf Salem set} if $\dim_FE =\dim_HE$ and $\mu$ is called a {\bf Salem measure} if $\dim_F\mu$ is equal to the Hausdorff dimension of its closed support. Salem sets are ubiquitous in probabilistic models, where readers can refer to \cite{LabaPramanik2009}, \cite{Kahane1985RandomSeries} or Salem's original paper \cite{Salem1951} for more details, but is very difficult to construct deterministically. In the current literature, the only known deterministic Salem sets of fractional dimensions are all constructed via diophantine approximation which was
first discovered by Kaufman \cite{Kaufman1981} (see also Wolff's lecture note \cite{Wolff2003HarmonicAnalysis} and \cite{Bluhm1998Kaufman}). Kaufman's construction was extended to $\R^2$ by Hambrook
\cite{Hambrook2017ExplicitSalem} and subsequently to higher dimensions by Fraser and Hambrook \cite{FraserHambrook2023}, yielding deterministic Salem sets in arbitrary dimensions. It has also been applied to a variety of problems in harmonic analysis. This is a huge field of research and we refer readers to the recent work in  \cite{Li-Liu-2026-diophantine,fraser2025sharpnessmockenhauptmitsisbakseegerfourierrestriction} and the reference therein. 

Shmerkin \cite{Shmerkin2017} showed that if we consider Cantor sets with contraction ratios $N_n^{-1}$ in stage $n$, and select the $(n+1)$-th stage of basic intervals independently across all the $n$-th stage intervals, then the resulting random Cantor set is almost surely Salem. He used this construction to show that there exist Salem sets containing no nontrivial three-term arithmetic progressions. A similar construction was also considered by {\L}aba and Pramanik \cite{LabaPramanik2009}, as well as by Hambrook and {\L}aba \cite{HambrookLaba2013}, in their studies of the sharpness of Fourier restriction estimates under Mockenhaupt's condition. As { homogeneous} Moran sets have to preserve the same collection of basic intervals in the next stage of construction, these random constructions are different from constructing homogeneous Moran sets.

The convolution structure of homogeneous Moran measures seems to make
Fourier decay difficult to establish. For example, it is well-known that the middle-third Cantor set has Fourier dimension zero. Taking $N_n = 3$ and $\B_n = \{0,2\}$ for every $ n \in \mathbb{N} $, (\ref{eq-FT}) becomes 
$$\widehat{\mu}(\xi) = e^{ - \pi i \xi}\prod_{n=1}^{\infty}\cos(2\pi \xi 3^{-n}),$$
for $ \xi \in \mathbb{R} $. It follows directly from this formula that $|\widehat{\mu}(3^n)| = |\widehat{\mu}(1)|\ne 0$ for all $ n \in \mathbb{N} $. A closed set $E\subset [0,1]$ is a \textbf{set of uniqueness},
or a \emph{$U$-set}, if for any sequence of complex numbers $ \{ c_k \}_{k\in \mathbb{Z}} $,
\[
  \lim_{m\to\infty}\sum_{k=-m}^{m}c_ke^{2\pi ikx}=0, \ \textup{ for every } x\in [0,1]\setminus E
\]
implies that $ c_k = 0 $ for every $ k \in \mathbb{Z} $.
 It is known that if $E$ is a set of uniqueness, then $E$ supports no measure that has Fourier decay and hence $\dim_FE=0$ \cite{KechrisLouveau1987}. The middle-third Cantor set is a set of uniqueness.  The behavior generalizes easily to a large class of homogeneous Moran sets.  Indeed, to the best of our knowledge, all currently known homogeneous Moran sets of zero Lebesgue measure are sets of uniqueness, and hence it has Fourier dimension zero \cite{Lai2017,Lai-Xie-2026}. An important and attractive question here is the following:

{\bf (Qu 1):} Do we have any Lebesgue measure zero homogeneous Moran sets  that have positive Fourier dimension? If so, are they Salem? Can they be constructed deterministically?  

It was even conjectured in \cite[Section 7]{Lai-Xie-2026} that all Moran sets of Lebesgue measure zero are sets of uniqueness. But we are going to disprove our conjecture and show, rather surprisingly, that the opposite of the conjecture is true and {\bf (Qu 1)} has a positive answer.

\subsection{Spectral measures} A Borel probability measure $\mu$ on $\R^d$ is called a {\bf spectral measure} if there exists a countable set $\Lambda\subset \R^d$ such that  $\mu$ admits an exponential orthonormal basis $\{e^{2\pi i \lambda\cdot x}\}_{\lambda\in \Lambda}$ for $L^2(\mu)$, and $\Lambda$ is called a {\bf spectrum} for $\mu$. We call $\Omega$ a {\bf spectral set} if $L^2(\Omega)$ admits an exponential orthonormal basis. The study of spectral sets dates back to Fuglede \cite{Fuglede1974} and his celebrated conjecture, now known as Fuglede's conjecture, which asserts that spectral sets and translational tiles are the same class of sets. The conjecture was first disproved by Tao \cite{Tao2004Fuglede}, but it is still an active area of research.  We refer the readers to the recent survey by Kolountzakis for the comprehensive narrative of this conjecture \cite{Kolountzakis2026OrthogonalFourier}. 

 Jorgensen and Pedersen \cite{JP98} discovered that the middle-fourth Cantor measure is a spectral measure in 1998. The mechanism for Jorgensen and Pedersen can be described by Hadamard triples. Let \(N\ge2\) be an integer, and let \(B,L\subset\mathbb Z\) be finite nonempty sets with \(|B|=|L|\). We say that  $(N, B, L)$ is a {\bf Hadamard triple} if the matrix 
 $$
 \frac{1}{\sqrt{\#B}} \left(e^{2\pi i \frac{b\ell}{N}}\right)_{\ell\in L, b\in B}
 $$
 is a unitary matrix. It also means that the equal-weighted discrete measure $\delta_B$ admits a spectrum $\frac{1}{N}L$.  Suppose that $\{(N_n,B_n,L_n)\}$ is a tower of Hadamard triples.  Then the convolution of the first $n$ factors in \eqref{cantor-moran_measure} is a spectral measure with spectrum $L_1+N_1L_2+\cdots+(N_1\cdots N_{n-1})L_n$. However, it is not easy to show that the limiting Cantor-Moran measure is a spectral measure. The case of the self-similar measures in $\R$ was first studied by {\L}aba and Wang \cite{LW02} and finally its higher dimensional generalization was completed by Dutkay, Haussermann and Lai \cite{DHL2019}. For the general Moran measures, Strichartz \cite{Strichartz2000MockFourier} gave a very first study. It was not until An and He \cite{AnHe2014} found a large class of such spectral Moran measures. A general theory was  obtained in \cite{AnFuLai2019} and \cite{Li-Miao-Wang}, among many other authors studying different aspects of the problem (see e.g. \cite{LLZ2026} and references therein). 

Despite many studies about how these Hadamard triple systems can generate spectral Cantor-Moran measures, no such measure with Fourier decay was previously known. More generally, even without assuming that the measure arises from a Hadamard triple construction, the following question has remained open:

{\bf (Qu 2):} Does there exist any singular spectral measure in $\R$ that has Fourier decay? 

In the deterministic case, it has been known that surface measures on the convex body of positive Gaussian curvature, which are Salem, admits no Fourier frame and hence it is non-spectral \cite{IosevichLaiLiuWyman2022} (see also \cite{ChenLiu2025FourierFrames}).  Under various natural probabilistic models, Li and Liu \cite{li2025fourierframessalemmeasures} demonstrated the non-existence of Fourier frame (hence exponential orthonormal basis) in these models in high probability. However, in the same paper, they also remark that the push forward of the standard Lebesgue measure of $[-1/2,1/2]$ onto the spherical arc in the unit circle in $\R^2$  is a spectral measure, while it is Salem. This shows that a Salem spectral measure can exist trivially for sets with Hausdorff dimension one in $\R^2$. This leaves whether the existence of  Salem spectral measure with a fractional Hausdorff dimension an interesting, but delicate question. 

\subsection{Main Results.} The main result of this paper provides a positive answer to both {\bf (Qu 1)} and {\bf (Qu 2)} in any non-integral Hausdorff dimension in $\R^1$.

\begin{theorem}\label{thm:main}
For every $0< s\le1$, there is a deterministic Moran set $K$ in (\ref{eq_Moran})  with its associated Moran measure  $\mu$ in (\ref{cantor-moran_measure}) such that $\dim_F\mu = \dim_HK = s$ and $\mu$ is a spectral measure. Consequently, $K$ is a Salem set.
\end{theorem}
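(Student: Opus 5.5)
For $s=1$ one can take $N_n=2$ and $\B_n=\{0,1\}$, so $\mu$ is Lebesgue measure on $[0,1]$; it is spectral with spectrum $\Z$ and has $|\widehat\mu(\xi)|\lesssim|\xi|^{-1}$, hence $\dim_F\mu=1$. So assume $0<s<1$.

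\textbf{Construction.} Take $N_n=p_n$ to be a rapidly increasing sequence of primes. Take $\B_n\subset\{0,\dots,p_n-1\}$ to be an algebraically defined set of size $\#\B_n\approx p_n^{s}$ whose exponential sums have square-root cancellation by the Weil bound. The natural choice is the set of $d$ such that $d^{k}$ lands in a fixed residue class, i.e. a coset of the subgroup of $k$-th powers, or a suitable union of such cosets. The first step is to show that
\[
\max_{1\le\xi\le p_n-1}\Big|\sum_{d\in\B_n}e^{2\pi i d\xi/p_n}\Big|\lesssim_k \sqrt{p_n}\,\log p_n .
\]
This follows from the Weil bound for character sums with polynomial arguments. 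Because Gauss-sum-type bounds cap such a structured set at size about $p_n^{1/2}$, I would instead take $\B_n$ to be the image of a polynomial or a multiplicative subgroup of order $\approx p_n^{s}$ in $\F_{p_n}^*$. Subgroups of order $p^{s}$ with $s>1/2$ still give $\sqrt p$ bounds via Gauss sums. For $s\le1/2$ I would use Bourgain-type bounds, or simply pass to products or dilates to adjust the size. This gives $|{\mathsf M}_{\B_n}(\xi/p_n)|\lesssim p_n^{1/2-s+\epsilon}$ for $\xi\not\equiv0$, which is $\le p_n^{-(s/2)+\epsilon}$ when $s\ge$ something. I expect this size versus cancellation mismatch to be the main obstacle. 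Treating a non-integer frequency $\xi\in\R$ (not just residues) requires an extra completion step costing a $\log p_n$ factor.

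\textbf{Hausdorff dimension and Fourier decay.} $\dim_H K=\liminf\log(\#\B_1\cdots\#\B_n)/\log(p_1\cdots p_n)$. With $\#\B_n\approx p_n^{s}$ and $p_n$ growing, but not so fast that the $\liminf$ is spoiled (e.g. $\log p_{n+1}=o(\log(p_1\cdots p_n))$), this equals $s$. For decay, given $|\xi|\in[P_{n-1},P_n)$ with $P_n=p_1\cdots p_n$, factor $\widehat\mu(\xi)$ as in \eqref{eq-FT}. The factor at scale $n$ (and $n+1$) with argument $\xi/P_n$ of size comparable to $1$ is bounded via the Weil estimate by roughly $(\#\B_n)^{-1}\sqrt{p_n}\,\log p_n$. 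The remaining factors are bounded by $1$. Combining this with the subexponential growth of $p_n$ gives $|\widehat\mu(\xi)|\lesssim|\xi|^{-s/2+\epsilon}$, so $\dim_F\mu\ge s$. Since $\dim_F\mu\le\dim_HK$, equality holds.

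\textbf{Spectrality.} To make $\mu$ spectral, I would interleave. At odd stages, use the Salem-type digit sets above. Additionally, require that each $(N_n,\B_n,L_n)$ be a Hadamard triple. For a prime $N_n$ this forces $\#\B_n=1$ or $N_n$, so I would replace $p_n$ with $N_n=p_n^{2}$ or $N_n=p_nq_n$ and choose $\B_n=p_n\cdot A_n+\{0\}$ type sets. More concretely, I would take $\B_n=\{0,\dots,m_n-1\}\cdot t_n$ with $t_nm_n\mid N_n$, where the spectrum is obvious, and then compose with a Weil-type rearrangement. I would then invoke the An--He / An--Fu--Lai criterion: if $\sum_n \#\B_n^{-1}\ldots$, or more simply if $\inf_n\inf_{\xi\in T_n}|{\mathsf M}_{\B_n}|$ is positive on the relevant compact set and $N_n\to\infty$ suitably, then $\mu$ is spectral. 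Reconciling the Hadamard structure, which concentrates $\widehat\mu$ on lattices, with the Weil cancellation is the genuinely hard point. I would first try $\B_n$ to be a complete residue system modulo $M_n$ embedded in $\Z_{N_n}$ via a polynomial map. This gives both a spectrum $\frac{N_n}{M_n}\Z$-type and equidistribution off that lattice.
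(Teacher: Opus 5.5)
Your reduction of the case $s=1$ to Lebesgue measure on $[0,1]$ is fine. For $0<s<1$, however, there is a genuine gap, and it sits exactly at the ``size versus cancellation mismatch'' you flag, which is the heart of the theorem.

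\textbf{The key estimate.} Salem decay needs $|\mathsf M_{\B_n}|\lesssim(\#\B_n)^{-1/2+o(1)}$ away from the trivial frequencies. That is square-root cancellation relative to the \emph{size of the digit set}, not relative to the modulus. Your choices do not give this:
\begin{itemize}
\item With $N_n=p_n$ prime and $\#\B_n\approx p_n^{s}$, any Weil or Gauss-sum bound is a sum over $\F_{p_n}$ and gives only $\sqrt{p_n}$. So $|\mathsf M_{\B_n}(a/p_n)|\lesssim p_n^{1/2-s}$, which corresponds to Fourier dimension $2s-1<s$, and to nothing when $s\le1/2$.
\item Bourgain-type subgroup bounds improve the trivial bound only by a tiny power $p_n^{-\epsilon}$.
\item Products or dilates such as $A+pA+\cdots$ have mask $\prod_j\mathsf M_A(p^j\xi)$, and at $\xi=a/p$ only one factor is small.
\end{itemize}

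\textbf{The missing idea.} Work at scale $N_n=p^{q}$ with $\#\B_n=p^{r}$, and couple all digits through $\F_{p^r}$: the base-$p$ digits of $b(x)$ are the coordinates of $x$, followed by those of $x^2,\dots,x^{d}$. Expand each digit by finite Fourier inversion. This costs an $\ell^1$ loss of $(2\log 2p)^q$, uniform in real $\xi$, which is absorbed by taking $p_n$ large. The purely linear terms then sum exactly to $\mathsf D_{p^q}(\xi)$. Every other term is $p^{-r}\sum_{x\in\F_{p^r}}\psi(P(x))$ with $2\le\deg P\le d<p$. Weil bounds this by $(d-1)p^{-r/2}=(d-1)(\#\B_n)^{-1/2}$, because the sum now runs over a field of size exactly $\#\B_n$.

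\textbf{The decay step.} What one obtains, and needs, is the uniform approximation $\|\mathsf M_{\B_n}-\mathsf D_{N_n}\|_\infty\le\eta_n$, not smallness of $\mathsf M_{\B_n}$. For non-integer $t=\xi/P_{n-1}$ (for example $t=3/2$) the $n$-th factor is close to $\mathsf D_{N_n}(t/N_n)$, which is not small. So ``bound the scale-$n$ factor by Weil and the rest by $1$'' fails. The paper pairs it with the $(n-1)$-th factor via $|\mathsf D_{N_{n-1}}(t)\mathsf D_{N_n}(t/N_n)|\le\pi/(4N_{n-1}t)$, and with the $(n+1)$-th factor when $t$ is near $N_n$.

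\textbf{Growth of the scales.} Your condition points the wrong way. The upper bound $\dim_H K\le s$ follows from the natural covering with no growth restriction, once $\log\#\B_n/\log N_n\to s$. Your decay argument, by contrast, yields at best $|\widehat\mu(\xi)|\lesssim N_n^{-s/2+o(1)}$ for $|\xi|$ up to $P_n=P_{n-1}N_n$. This is $\lesssim|\xi|^{-s/2+\epsilon}$ only if $\log N_n/\log P_{n-1}\to\infty$. Under your condition $\log p_{n+1}=o(\log(p_1\cdots p_n))$ it gives no power decay at all. This is why the paper imposes $N_n>P_{n-1}^{\,n}$, which also gives $N_{n-1}>P_{n-1}^{(n-1)/n}$ for the two-factor bound.

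\textbf{Interleaving.} If interleaving means inserting stages $\B_n=t_n\{0,\dots,m_n-1\}$ with $t_nm_n=N_n$ and $t_n\ge3$, those stages kill decay. At $\xi=P_{n-1}m_n$ every factor with $j\le n$ equals $1$, and $|\mathsf M_{\B}(x/N)|\ge\cos(\pi|x|)$ keeps the tail bounded below.

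\textbf{Spectrality.} Your final suggestion, a complete residue system mod $M_n$ embedded in $\Z_{N_n}$ by a polynomial map, is in fact the paper's digit set. The low $r_n$ digits make $\B_n$ a complete residue system mod $p_n^{r_n}$, so $(N_n,\B_n,p_n^{q_n-r_n}\mathcal K_n)$ is a Hadamard triple. There is no conflict with decay: this only forces zeros of $\mathsf M_{\B_n}$ at $k/p_n^{r_n}$ with $p_n^{r_n}\nmid k$, and $\mathsf D_{N_n}$ vanishes there anyway. The Lai--Wang tail condition then again uses the Weil approximation, through $|\mathsf M_{\B_{n+1}}(x/N_{n+1})|\ge 2/\pi-\eta_{n+1}$ for $|x|\le1/2$, together with $1-|\mathsf M_\B(x/N)|^2\le\pi^2x^2$ for the later factors. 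Without the full-scale Weil estimate over $\F_{p^r}$, neither the decay nor this lower bound is available.
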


The following is  a brief outline of the proof.  The precise Moran construction will be described in Section \ref{Sec4} and will be stated in Theorem \ref{main-theorem-detail}. The main idea of achieving the near optimal Fourier decay is to construct digit sets $B_n$ inside $\{0,1,\cdots, N_n-1\}$ such that the mask function ${\mathsf M}_{\B_n}(\cdot)$ is close to the Dirichlet mask function $ \mathsf{D}_{N_n}(\cdot) $, which is the mask for $\{0,1,\cdots, N_n-1\}$, in the supremum norm of $\R$. A very closely related result is in \cite[Lemma 6.2]{LabaPramanik2009} where such $\B_n$ can be constructed probabilistically. They attributed the idea to Green \cite{Green2002ArithmeticProgressions}.  The same lemma also appeared in \cite[Lemma 7]{HambrookLaba2013}. 

In order to construct deterministically the required Moran measures, we need to construct such $\B_n$ deterministically. It turned out that this can be done via the Weil bound in finite field. 
 The technique of Weil estimates goes back to Gauss's work on quadratic reciprocity \cite{Gauss1811}.
For an odd prime $p$, his evaluation of the quadratic Gauss sum gives
\[
\sum_{x=0}^{p-1}e^{2\pi i x^2/p} =
\begin{cases}
\sqrt p, & p\equiv1\pmod4,\\
i\sqrt p, & p\equiv3\pmod4.
\end{cases}
\]
Weil's bound extends this phenomenon to polynomial
additive character sums over finite fields \cite{Weil1948};
the version used in the paper is stated in Theorem~\ref{thm:weil}. At a single level in our construction, we construct a digit set $ \mathcal{B} $ containing \(p^r\) many elements inside \(\{0,1,\ldots,p^q-1\}\) where $r<q$. Fixing a basis of \(\mathbb F_{p^r}\) over \(\mathbb F_p\), we represent each element in $ \mathbb{F}_{p^r} $ by \(r\) coordinates in \(\mathbb F_p\). For each \(x\in\mathbb F_{p^r}\), concatenate the coordinate vectors of \(x,x^2,x^3,\ldots\), and let \(g_0(x),\ldots,g_{q-1}(x)\) be the first \(q\) entries, represented as integers in \(\{0,\ldots,p-1\}\). Define the injective map $ b:\mathbb F_{p^r} \to \{0,\ldots,p^q-1\} $ by 
$$ b(x)=\sum_{j=0}^{q-1}g_j(x)p^j $$
(see \eqref{eq-g_j} and \eqref{eq:bdef}). Define the digit set $ \mathcal B=\{b(x):x\in\mathbb F_{p^r}\}. $ We will compute the Fourier transform of $\B$ via the finite Fourier inversion  formula and expand them into polynomial character sums in Proposition \ref{Prop2.3}. The zero-frequency term is exactly the Dirichlet mask, while Weil's bound controls the remaining terms. This yields a uniform approximation of the digit mask by the Dirichlet mask 
\begin{equation}\label{d-esti}
    \sup_{\xi\in\mathbb R} \left| \mathsf M_{\mathcal B}(\xi)-\mathsf D_{p^q}(\xi)
\right| \le (d-1)p^{-r/2}\bigl(2\log(2p)\bigr)^q,
\end{equation}
where $d = \lceil q/r\rceil$ and $p>d$ is an odd prime. The precise execution will be developed in Theorem \ref{thm3.1-Weil} in Section \ref{Sec3}. In Section \ref{Sec2}, we will provide the preliminaries on the finite field needed to set up our proof in Section \ref{Sec3}. We acknowledge that the idea of Weil bound was germinated from ChatGPT 5.6-Sol. In search of the literature, we found that a closely related construction using powers of finite-field elements and Weil's bound appears in the work of Dick \cite[Section~7]{Dick2014} on the numerical approximation of high-dimensional integrals. AI may have taken ideas from there.

To construct the required Moran measure, we will provide an iteration scheme to recursively define prime numbers $p_n$ and the associated digit sets $\B_n$ satisfying Theorem \ref{thm3.1-Weil}. This will be given in Section \ref{Sec4}.  Once a frequency $\xi$ is fixed, the estimation of Fourier transform for Moran measures are completely determined by three levels of the mask functions whose scale is compatible to $\xi$, the Weil estimates at these levels provide the necessary Fourier decay. 
This proof will be carried out in Section \ref{Sec5}.

Finally, we will see that all $\B_n$ are spectral sets in their corresponding finite group. The theory of spectral Moran measures for this spectral measure developed in \cite{LaiWang,AnFuLai2019} still applies. Therefore,  the resulting measure is a spectral measure. The detailed proof will be given in Section \ref{Sec6}.

\subsection{Disclosure of AI use.} Our initial approach to Theorem \ref{thm:main} was inspired by the Rudin–Shapiro polynomial construction (see e.g. \cite[p.34]{Katznelson2004HarmonicAnalysis}). We input this basic idea into ChatGPT 5.6-Sol, and it was confirmed that the measure is spectral and has a positive Fourier dimension, though it is not Salem. With this encouraging outcome, we continued to use AI to explore possible refinements. After many exchanges, AI proposed to use the Weil bound instead of refining the Rudin–Shapiro polynomials.  The authors checked carefully every step it produced, expanded all unclear details and added in possible motivations from related work. The final proof was written by us and we take full responsibility for all mathematical claims, proofs, and the final content of the paper. 

We have also formalized in Lean~4, using Mathlib, the conclusions of Theorem~\ref{thm:main} that the constructed Cantor--Moran measure $\mu$ is spectral and that the corresponding Moran set $K$ is Salem. 
The resulting proofs have been checked by the Lean kernel. The formalization is contained in \texttt{Moran/Moran.lean},
and the source code is available at \url{https://github.com/123yhxie-a11y/moran-lean}. 

\subsection{Acknowledgement.} Part of the work was initiated when Chun-Kit Lai and Yu-Hao Xie were residing in The Chinese University of Hong Kong. The authors would like to thank Professor De-Jun Feng for his hospitality. Chun-Kit Lai was partially supported by the AMS-Simons Research Enhancement Grants for Primarily Undergraduate Institution (PUI) Faculty. Ruxi Shi was supported by the NSFC Nos.~12571198 and 12231013.

\section{Preliminaries on Finite Fields}\label{Sec2}
In this paper, $p$ is an odd prime and we will fix $\F_p$ to be the finite field of $p$ elements. We will identify $\F_p = \{0,1,\cdots, p-1\}$ with arithmetic performed modulo $p$. $\F_{p^r}$ will denote the finite field of $p^r$ elements. From modern algebra,  $\F_{p^r}$ is the splitting field of the polynomial $x^{p^r}-x$ over $\F_p$. We can write $\F_{p^r} = \F_p(\alpha)$, where $\alpha$ is a root of a degree $r$ irreducible polynomial over $\F_p$.

When an $\alpha\in\F_{p^r}$ whose minimal polynomial has degree $r$ is chosen,   $\{1,\alpha, \cdots, \alpha^{r-1}\}$ is a basis for $\F_{p^r}$ over $\F_p$. Hence, for all $x\in \F_{p^r}$, there exists  unique $(c_0(x),\cdots, c_{r-1}(x))\in \F_{p}\times\cdots\times \F_{p}$ ($r$ times) such that 
\begin{equation}\label{eq:digit-expansion}
x = c_0(x)+ c_1(x)\alpha+\cdots+c_{r-1}(x)\alpha^{r-1}.
\end{equation}
Furthermore, the coordinate map
\begin{equation}\label{isomorphism}
C:\F_{p^r}\longrightarrow\F_p^r,
\qquad
C(x)=\bigl(c_0(x),\ldots,c_{r-1}(x)\bigr),
 \end{equation}
 is an $\F_p$-isomorphism between $\F_{p^r}$ and $\F_{p}^r$ ($r$ copies of $\F_p$ in cartesian product) where $\F_{p}^r$ is considered as a vector space over $\F_p$ under its addition and $\F_p$-scalar multiplication.

\subsection{Trace and linear functionals} We study the $\F_{p}$-linear functionals of $\F_{p^r}$ and provide a characterization by its  trace function. Readers can refer to \cite{Finitefield-book} or \cite{IrelandRosen1990} for more details.   Define the {\bf trace} of $\F_{p^r}$ by $\Tr: \F_{p^r}\to \F_p$ 
$$
\Tr(x) = x+x^p+\cdots + x^{p^{r-1}}.
$$
In literature, people usually write $\Tr_{{\mathbb F}_{p^r}/\F_p}(x)$. But we omit the field dependence since we will only consider one field in the next two sections while studying trace. 
We begin with a lemma about the property of the trace map. This can be deduced easily using basic properties of finite fields.
\begin{lemma}\label{lem:trace-property}\cite[Theorem 2.23]{Finitefield-book}
$\Tr$ is a well-defined linear transformation from $\F_{p^r}$ to $\F_p$ that maps onto $\F_{p}$.  
\end{lemma}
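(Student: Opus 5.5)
We need to show three things about $\Tr$: it actually takes values in $\F_p$, it is $\F_p$-linear, and it is surjective onto $\F_p$.

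\emph{Values in $\F_p$.} We use that $\F_p$ is exactly the set of roots of $y^p-y$ in $\F_{p^r}$, so it suffices to check $\Tr(x)^p=\Tr(x)$. The Frobenius map $y\mapsto y^p$ is additive in characteristic $p$, which gives
\[
\Tr(x)^p = x^p+x^{p^2}+\cdots+x^{p^r}.
\]
Since every element of $\F_{p^r}$ satisfies $x^{p^r}=x$, the last term equals $x$. The sum is therefore a cyclic reordering of the terms of $\Tr(x)$, so $\Tr(x)^p=\Tr(x)$ and $\Tr(x)\in\F_p$.

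\emph{Linearity.} Additivity, $\Tr(x+y)=\Tr(x)+\Tr(y)$, follows termwise from additivity of each power map $y\mapsto y^{p^j}$. For $c\in\F_p$ we have $c^{p^j}=c$ by Fermat's little theorem, so $(cx)^{p^j}=c\,x^{p^j}$ and hence $\Tr(cx)=c\Tr(x)$.

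\emph{Surjectivity.} This is the step needing a small idea. Because the image of an $\F_p$-linear map into the one-dimensional space $\F_p$ is either $\{0\}$ or all of $\F_p$, it is enough to show that $\Tr$ is not identically zero. As a polynomial in $x$, $\Tr$ is nonzero of degree $p^{r-1}$, so it has at most $p^{r-1}<p^r$ roots in the field. Hence some $x\in\F_{p^r}$ has $\Tr(x)\neq 0$, and $\Tr$ is onto $\F_p$. As a byproduct, its kernel has exactly $p^{r-1}$ elements.
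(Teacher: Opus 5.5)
Your proof is correct and is essentially the standard textbook argument behind the result the paper cites for this lemma without reproducing a proof. Linearity comes from additivity of Frobenius together with $c^{p^j}=c$ for $c\in\F_p$, and surjectivity from the fact that the nonzero polynomial $X+X^p+\cdots+X^{p^{r-1}}$ has at most $p^{r-1}<p^r$ roots. The only minor difference is that you check $\Tr(x)\in\F_p$ via $\Tr(x)^p=\Tr(x)$, rather than by identifying $\Tr(x)$ as (minus) a coefficient of the characteristic polynomial of $x$; both arguments are valid.
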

We provide a complete characterization for the linear functionals of $\F_{p^r}$. This can also be found in \cite[Theorem 2.24]{Finitefield-book}. We provide a proof for completeness. 
\begin{lemma}\label{lem:trace}
For every $\F_p$-linear functional $\ell:\F_{p^r}\to\F_p$, there exists a unique $A\in {\mathbb F}_{p^r}$ such that 
\[
\ell(x)=\Tr(Ax).
\]

\end{lemma}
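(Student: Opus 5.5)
The plan is a dimension count combined with injectivity. For each $A\in\F_{p^r}$, define $\ell_A(x)=\Tr(Ax)$. By Lemma~\ref{lem:trace-property}, $\Tr$ is $\F_p$-linear, and multiplication by $A$ is $\F_p$-linear on $\F_{p^r}$. Hence each $\ell_A$ is an $\F_p$-linear functional. Moreover the map
\[
\Phi:\F_{p^r}\to \operatorname{Hom}_{\F_p}(\F_{p^r},\F_p),\qquad \Phi(A)=\ell_A,
\]
is itself $\F_p$-linear, since $\Tr((A+cB)x)=\Tr(Ax)+c\Tr(Bx)$ for $c\in\F_p$. The statement of Lemma~\ref{lem:trace} says exactly that $\Phi$ is a bijection: surjectivity gives existence of $A$, and injectivity gives uniqueness.

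First I show $\Phi$ is injective. Suppose $\ell_A\equiv 0$ with $A\neq 0$. Then $x\mapsto Ax$ is a bijection of $\F_{p^r}$, so $\Tr(y)=0$ for every $y\in\F_{p^r}$. This contradicts the surjectivity of $\Tr$ onto $\F_p$ from Lemma~\ref{lem:trace-property}. Therefore $\ker\Phi=\{0\}$.

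Next I compare cardinalities. The domain $\F_{p^r}$ has $p^r$ elements. Using the coordinate isomorphism $C$ in \eqref{isomorphism}, a linear functional on $\F_{p^r}$ is determined by its values on the basis $\{1,\alpha,\dots,\alpha^{r-1}\}$, and these values can be chosen freely. So the space of functionals also has exactly $p^r$ elements (dimension $r$ over $\F_p$). An injective map between finite sets of the same size is bijective, which gives both existence and uniqueness of $A$.

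There is no serious obstacle in this argument. The only substantive input is that $\Tr$ is not identically zero, and that is supplied by Lemma~\ref{lem:trace-property}; it ultimately comes from the polynomial $x+x^p+\cdots+x^{p^{r-1}}$ having degree $p^{r-1}<p^r$, so it cannot vanish on all of $\F_{p^r}$.
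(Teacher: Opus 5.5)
Your proposal is correct and follows essentially the same route as the paper. The map $A\mapsto \Tr(A\,\cdot)$ is $\F_p$-linear and injective because $\Tr$ is not identically zero, so it is bijective by a dimension count. The paper phrases that last step via rank--nullity, where you count $p^r$ elements on each side.
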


\begin{proof}
Let $ F^* $ be the dual space of $\F_{p^r}$ over $\F_p$. By Lemma \ref{lem:trace-property}, the map $\varphi_A: x\mapsto\Tr(Ax)$ is in $F^{\ast}$ for all $A\in \F_{p^r}$. Define the map $\Phi: \F_{p^r}\to F^{\ast}$ by
$$
\Phi(A) = \varphi_A.
$$
A routine check shows that $\Phi$ is a linear map over $\F_p$. The lemma will follow if we can show that $\Phi$ is bijective. As the dimension of $F^{\ast}$ and $\F_{p^r}$ are both equal to $r$ as a vector space over $\F_p$, by the rank-nullity theorem, it suffices to show that $\Phi$ is injective.

Suppose that  $\Phi$ is not injective. Then there exists $A\ne 0$ such that  $\varphi_A(x) = 0$ for all $x\in \F_{p^r}$. By Lemma \ref{lem:trace-property}, we can find $a\ne 0$, such that $\Tr(a)\ne 0$. This implies that $\varphi_{A}(A^{-1}a) = \Tr(a)\ne0$, which is a contradiction. Hence, $\Phi$ is injective and therefore bijective, proving the lemma.
\end{proof}

\subsection{Fourier inversion formula.}Define 
\[
e_p(t)=e^{2\pi it/p},\qquad t\in\Z.
\]
If $f:\F_p\to\C$ the Fourier transform is defined to be
\[
\widehat{f}(a)=\frac1p\sum_{x\in\F_p}f(x)e_p(-ax),\qquad a\in\F_p.
\]
For fixed $\xi\in\R$, define
\[
f_{j,\xi}(t)=e^{-2\pi i p^j\xi t},
\qquad t\in\F_p,
\]
where $t$ in the exponential denotes its standard
integer representative in $\{0,1,\ldots,p-1\}$.
Let $ r, q \in \mathbb{N} $. Suppose that $b: \F_{p^r}\to \{0,1,\cdots, p^q-1\}$ is a map from the field to the integers in $\{0,1,\cdots, p^q-1\}$. 
We can expand $b(x)$ by its $p$-adic expansions. This defines uniquely $q$ many digit functions $g_j: \F_{p^r}\to \F_p$ so that 
\begin{equation}\label{eq: b-digit}
   b(x) = \sum_{j=0}^{q-1} g_j(x) \cdot p^j 
\end{equation}

Define also the multiset
 $$
 {\mathcal B}: = \{b(x): x\in{\mathbb F}_{p^r}\}. 
 $$
In the above, $b$ may not be injective, so elements in ${\mathcal B}$ may be repeated. The mask function ${\mathsf M}_{{\mathcal B}}(\xi)$ will also count for its multiplicity and the following proposition gives an expansion of this mask function using finite Fourier inversion.
\begin{proposition}\label{Prop2.3}
    Let $b:  \F_{p^r}\to \{0,1,\cdots, p^q-1\}$ be the map defined in (\ref{eq: b-digit}) with digit functions $g_j$. Then for $ \xi \in \mathbb{R} $, \begin{equation}\label{eq:maskexpansion}
{\mathsf M}_{{\mathcal B}}(\xi)=\sum_{{\bf a}=(a_0,\ldots,a_{q-1})\in\F_p^q}
\left(\prod_{j=0}^{q-1}\widehat{ f_{j,\xi}}(a_j)\right)S({\bf a}),
\end{equation}
where
\begin{equation}\label{eq:Sdef}
S({\bf a})=\frac1{p^r}\sum_{x\in {\mathbb F}_{p^r}}e_p\left(\sum_{j=0}^{q-1}a_j g_j(x)\right), \quad \textup{for } {\bf a} =(a_0,\ldots,a_{q-1})\in\F_p^q.
\end{equation}
\end{proposition}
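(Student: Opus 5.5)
The plan is to expand each factor of the exponential via finite Fourier inversion on $\F_p$ and then exchange the order of summation. By definition of the mask function, counting multiplicity in the multiset $\mathcal B$, we have
\[
{\mathsf M}_{\mathcal B}(\xi)=\frac1{p^r}\sum_{x\in\F_{p^r}}e^{-2\pi i b(x)\xi}.
\]
The $p$-adic expansion \eqref{eq: b-digit} lets us factor the exponential as
\[
e^{-2\pi i b(x)\xi}=\prod_{j=0}^{q-1}e^{-2\pi i p^j\xi\, g_j(x)}=\prod_{j=0}^{q-1}f_{j,\xi}(g_j(x)).
\]
Here we use that $g_j(x)$ is read as its integer representative in $\{0,\dots,p-1\}$. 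This matches the convention in the definition of $f_{j,\xi}$, and it matters: $f_{j,\xi}$ is not $p$-periodic in general, since $\xi$ is real.

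Next we apply the Fourier inversion formula on $\F_p$ with the stated normalization, $f(t)=\sum_{a\in\F_p}\widehat f(a)e_p(at)$ for $t\in\F_p$. This follows from the orthogonality relation $\sum_{x\in\F_p}e_p(a(t-x))=p\,\mathbf 1_{x=t}$, which we quickly verify. Applying it to each $f_{j,\xi}$ at the point $t=g_j(x)$ gives
\[
f_{j,\xi}(g_j(x))=\sum_{a_j\in\F_p}\widehat{f_{j,\xi}}(a_j)\,e_p(a_jg_j(x)).
\]
Here $e_p(a_jg_j(x))$ depends only on $a_jg_j(x)$ mod $p$, so it is well defined on $\F_p$.

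Finally we multiply the $q$ expansions, which gives a sum over ${\bf a}\in\F_p^q$ of $\prod_j\widehat{f_{j,\xi}}(a_j)\,e_p\bigl(\sum_j a_jg_j(x)\bigr)$. We then average over $x\in\F_{p^r}$ and interchange the two finite sums. The inner average is exactly $S({\bf a})$ from \eqref{eq:Sdef}, which yields \eqref{eq:maskexpansion}.

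No step is a real obstacle, since everything is a finite sum. The only point needing care is the one noted above: inversion is applied to a function on $\F_p$, namely $t\mapsto e^{-2\pi ip^j\xi t}$ on the representatives $0,\dots,p-1$, and then evaluated at those same representatives. This keeps the identity exact for all real $\xi$.
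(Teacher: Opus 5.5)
Your proposal is correct and matches the paper's proof: you write $\mathsf M_{\mathcal B}(\xi)$ as the average over $x\in\F_{p^r}$ of $\prod_{j}f_{j,\xi}(g_j(x))$, expand each factor by finite Fourier inversion on $\F_p$, and interchange the finite sums so that $S(\mathbf a)$ appears. One trivial slip: the orthogonality relation you invoke should be summed over $a$, not over $x$, i.e.\ $\sum_{a\in\F_p}e_p(a(t-x))=p\,\mathbf 1_{x=t}$.
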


\begin{proof}
Substituting this expansion into every digit expression and using the definition of $f_{j,\xi}$ yields
$$
\begin{aligned}
{\mathsf M}_{\B}(\xi)=\frac{1}{p^r}\sum_{x\in\F_{p^r}} e^{-2\pi i \xi\cdot \left(\sum_{j=0}^{q-1}p^j g_j(x)\right)} =  \frac{1}{p^r} \sum_{x\in \F_{p^r}} \prod_{j=0}^{q-1} f_{j,\xi}(g_j(x)).
\end{aligned}
$$
 By the Fourier inversion formula, 
\[
f_{j,\xi}(g_j(x))=\sum_{a_j\in\F_p}\widehat{f_{j,\xi}}(a_j)\cdot e_p(a_jg_j(x)).
\]

We obtain furthermore that
$$
{\mathsf M}_{\B}(\xi)=\frac{1}{p^r}\sum_{x\in\F_{p^r}}\sum_{{\bf a}=(a_0,\ldots,a_{q-1})\in\F_p^q}
\left(\prod_{j=0}^{q-1}\widehat{f_{j,\xi}}(a_j)\cdot e_p(a_jg_j(x))\right).
$$
Interchanging the sum and splitting out the product obtains the desired formula.
\end{proof}

\subsection{Weil bound for polynomial additive character sums}
We first recall that an \emph{additive character} of $\F_{p^r}$  is an additive homomorphism $\psi:\F_{p^r} \to \{z\in\C:|z|=1\}$, i.e. 
\[
\psi(x+y)=\psi(x)\psi(y), ~~\forall x,y\in \F_{p^r}.
\]
It is called \emph{nontrivial} if $\psi\not\equiv1$,
that is, if there exists $x\in\F_{p^r}$ such that $\psi(x)\ne1$.

The following classical theorem is the key non-elementary finite-field estimate used in the construction (see e.g. \cite[Theorem 5.38]{Finitefield-book}).

\begin{theorem}[Weil bound]\label{thm:weil}
Let $ r \ge 1 $ and $ p $ be a prime. Let $\psi:\F_{p^r}\to\C$ be a nontrivial additive character, and let
\[
P(X)=A_1X+\cdots+A_eX^e\in \F_{p^r}[X]
\]
be a polynomial of degree $e\ge2$. If $p>e$, then
\begin{equation}\label{weil-bound}
    \left|\sum_{x\in \F_{p^r}}\psi(P(x))\right|\le(e-1)p^{r/2}.
\end{equation}
\end{theorem}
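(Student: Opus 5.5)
We will not reprove this bound from scratch inside the construction; we only sketch the standard route, via the Riemann hypothesis for curves over finite fields. Write $q_0=p^r$.

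\emph{Normalization.} Every additive character of $\F_{q_0}$ has the form $\psi(x)=e_p(\Tr(Bx))$ for some $B\in\F_{q_0}$. Indeed, $\psi$ takes values in the $p$-th roots of unity, so $\psi=e_p\circ\ell$ for an $\F_p$-linear functional $\ell$, and Lemma \ref{lem:trace} gives $\ell(x)=\Tr(Bx)$. Nontriviality of $\psi$ forces $B\neq0$. Replacing $P$ by $BP$ changes neither the degree $e$ nor the vanishing constant term. So we may assume $\psi=\psi_1:=e_p\circ\Tr$. For $m\ge1$ define
\[
S_m=\sum_{x\in\F_{q_0^m}} e_p\bigl(\Tr_{\F_{q_0^m}/\F_p}(P(x))\bigr),
\]
so that $S_1$ is the sum to be bounded.

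\emph{Rationality and degree of the $L$-function.} Form
\[
L(T)=\exp\Bigl(\sum_{m\ge1}S_mT^m/m\Bigr).
\]
Expanding the exponential as an Euler product over monic irreducible polynomials $\pi\in\F_{q_0}[X]$, we get $L(T)=\prod_\pi(1-\chi(\pi)T^{\deg\pi})^{-1}$. Here $\chi(\pi)=\psi_1(\text{sum of } P \text{ over the roots of }\pi)$, which extends to a character on monic polynomials. This character depends only on the top $e$ coefficients below the leading one, because the power sums $\sum \beta^k$ with $k\le e$ are determined by them via Newton's identities. Newton's identities are invertible here precisely because $p>e$, so the integers $1,\dots,e$ are invertible mod $p$. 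Hence $\chi$ is a character of a finite group of "truncated monic polynomials". Summing over monic polynomials of each degree $n$, orthogonality gives $\sum_{\deg f=n}\chi(f)=0$ for $n\ge e$. Consequently $L(T)$ is a polynomial of degree at most $e-1$. Because $P$ has degree exactly $e$ with $p\nmid e$, the coefficient of $T^{e-1}$ is nonzero, so the degree is exactly $e-1$. Write $L(T)=\prod_{i=1}^{e-1}(1-\omega_iT)$. Comparing logarithmic derivatives gives $S_m=-\sum_i\omega_i^m$, and in particular
\[
|S_1|\le\sum_{i=1}^{e-1}|\omega_i|.
\]

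\emph{Main obstacle: the root bound.} It remains to show $|\omega_i|\le q_0^{1/2}$, and this is the genuinely deep step. We would identify $\prod_{\psi\neq1}L_\psi(T)$ with the numerator of the zeta function of the Artin--Schreier curve $y^p-y=P(x)$. This curve has genus $(p-1)(e-1)/2$, by Riemann--Hurwitz with a single totally ramified point at infinity of conductor $e+1$, using $p\nmid e$. Weil's Riemann hypothesis then gives every reciprocal root absolute value $q_0^{1/2}$.

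Alternatively, one can stay elementary via the Stepanov--Bombieri method. First bound the number of points $|\#C(\F_{q_0^m})-q_0^m|\ll q_0^{m/2}$ for $m$ large even, using an auxiliary polynomial vanishing to high order at the rational points. Then $|\sum_i\omega_i^m|\le C q_0^{m/2}$ for all large $m$. A standard lemma on power sums (letting $m\to\infty$ along suitable subsequences) upgrades this to $|\omega_i|\le q_0^{1/2}$ for each $i$.

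Either way we conclude $|S_1|\le(e-1)p^{r/2}$, which is \eqref{weil-bound}.
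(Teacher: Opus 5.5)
The paper gives no proof of this statement. It imports the bound as a black box from \cite[Theorem 5.38]{Finitefield-book} and uses it only with $\psi=e_p\circ\Tr$. You instead sketch the classical proof behind that citation: normalization to $\psi_1$, the Euler product, factoring $\chi$ through truncations modulo $X^{e+1}$, $\deg L\le e-1$, and then either the Riemann hypothesis for the Artin--Schreier curve $y^p-y=P(x)$ of genus $(p-1)(e-1)/2$ or Stepanov--Bombieri. As an outline this is correct. It does not make the paper more self-contained, because the deep input is still cited, but it does show where the hypothesis $p>e$ is used, which the paper's citation leaves implicit.

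Three spots need tightening.

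(i) The direction of Newton's identities you actually use, power sums as integer polynomials in the coefficients, holds in every characteristic. What $p>e$ buys is that $u\mapsto -Xu'/u \bmod X^{e+1}$ identifies the truncation group with $(\F_{q_0}^e,+)$. Under this identification $\chi$ becomes $(s_1,\dots,s_e)\mapsto\psi_1(\sum_k A_k s_k)$, which is nontrivial because $A_e\neq0$. You never state this nontriviality, and the orthogonality step needs it. It also needs the fact that each truncation class has exactly $q_0^{n-e}$ monic preimages of degree $n\ge e$.

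(ii) Your claim that the coefficient of $T^{e-1}$ is nonzero is unsupported as written; proving it requires a functional equation. You do not need it. The bound $\deg L\le e-1$ together with $|\omega_i|\le q_0^{1/2}$ already gives $|S_1|\le(e-1)q_0^{1/2}$. The exact degree then follows afterwards from the curve identification, since $\prod_{a\in\F_p^*}L_{aP}(T)$ must have degree $2g=(p-1)(e-1)$ and each factor has degree at most $e-1$.

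(iii) In the Stepanov route, the point count controls $\sum_{a\in\F_p^*}S_m(aP)$. That is a power sum over the reciprocal roots of all the $L_{aP}$ together, not of $L_P$ alone, so your inequality for $\sum_i\omega_i^m$ should be stated for this combined family. Applying the power-sum lemma to the combined family still gives $|\omega_i|\le q_0^{1/2}$ in the case $a=1$.
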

We will use it in the next section. 

Assume that $p$ is an odd prime and that $\psi$ is a nontrivial additive character of $\F_{p^r}$. We remark that the upper bound $p^{r/2}$ in \eqref{weil-bound} is sharp. Indeed, if we take $P(x) = Ax^2$ for $ A \neq 0 $, define $G(A) = \sum_{x\in \F_{p^r}}\psi(Ax^2)$. Since $\overline{\psi(z)}=\psi(-z)$ for all $ z\in \mathbb{F}_{p^r} $, we have
$$
|G(A)|^2 = \sum_{x,y\in \F_{p^r}} \psi (A(x^2-y^2)).
$$
Since $p$ is odd, $2$ is invertible in $\F_{p^r}$.
Hence the change of variables $u=x-y$, $v=x+y$ is bijective, with inverse $x=(u+v)/2$, $y=(v-u)/2$. Therefore,
\begin{equation}\label{G(A)2}
    |G(A)|^2 = \sum_{u,v\in\F_{p^r}}\psi(Auv) = p^r + \sum_{\substack{u\in\F_{p^r}\\u\ne0}} \sum_{v\in\F_{p^r}}\psi(Auv).
\end{equation}
For each $u\ne0$, $v\mapsto Au \cdot v$ is a bijection, it follows 
\begin{equation}\label{sum-zero}
    \sum_{v\in \F_{p^r}}\psi (Auv) = \sum_{w\in \F_{p^r}} \psi (w) = 0,
\end{equation} 
where the sum $ \sum_{w\in \F_{p^r}} \psi (w) $ vanishes since $\psi$ is nontrivial. 
Hence, combining \eqref{G(A)2} and \eqref{sum-zero}, we have $|G(A)| = p^{r/2}$. This shows the sharpness of the Weil bound. This sharpness is the reason why we obtain the near optimal Fourier decay $\widehat{\mu}(\xi) = O(|\xi|^{-\alpha})$ for all $\alpha<\dim_H(E)/2$. If the factor $p^{r/2}$ in the bound were replaced by $p^{\theta r}$ for some $\theta>1/2$ and we follow along the line of the proof, we will only obtain $\alpha<(1-\theta)\dim_H(E)$ and $\dim_F(E)\ge 2(1-\theta)\dim_H(E)$. This estimate would not suffice to show that \(E\) is a Salem set. Here, \(E\) and \(\mu\) denote the set and measure to be constructed in the coming sections.

\section{The key Weil estimate}\label{Sec3}
Let 
\[
1\le r<q,\qquad m=q-r,\qquad h=\lceil m/r\rceil, \qquad d = h + 1.
\]
Let $p>h+1$ be an odd prime and let $\alpha\in \F_{p^r}$ be a root of a monic degree-$r$ irreducible polynomial over $\F_p$. For $ x\in \F_{p^r}$, by (\ref{eq:digit-expansion}), we can write 
\begin{equation}\label{eq-x-expand}
x= \sum_{j=0}^{r-1} c_j(x)\cdot \alpha^j.
\end{equation}
Note that $c_j$ are $\F_p$-linear functionals. Moreover, none of the $c_j$ is a zero linear functional. Otherwise, this will contradict the fact that $\{1,\cdots, \alpha^{r-1}\}$ is a basis of $\F_{p^r}$ over $\F_p$.  By Lemma \ref{lem:trace}, for each \(j=0,\ldots,r-1\), there exists a unique nonzero \(C_j\in\mathbb F_{p^r}\) such that
\[
c_j(x)=\operatorname{Tr}(C_jx), \quad\text{for all }x\in\mathbb F_{p^r}.
\]
We now list the coordinate functions of power functions in the  following order
\[
c_0(x^2),\ldots,c_{r-1}(x^2),c_0(x^3),\ldots,c_{r-1}(x^3),\ldots,c_0(x^{h+1}),\ldots,c_{r-1}(x^{h+1}).
\]
There are $hr\ge m$ many functions. Let $\phi_1,\ldots,\phi_m$ be the first $m$ functions on this list.  Define $g_j:\F_{p^r}\to\F_p$, $0\le j<q$, by
\begin{equation}\label{eq-g_j}
g_j(x)=
\begin{cases}
c_{j}(x),&0\le j<r,\\
\phi_{j-r+1}(x),&r\le j<q.
\end{cases}
\end{equation}
Then (\ref{eq: b-digit}) becomes
\begin{equation}\label{eq:bdef}
b(x)=\sum_{j=0}^{r-1} c_j(x) p^{j}+\sum_{j=1}^m \phi_j(x) p^{r+j-1}.
\end{equation}
Unlike the previous section,  the $b(x)$  in (\ref{eq:bdef}) is an injective function because $p$-adic digit expansion of $\N$ is unique and   $C(x) = (c_0(x),...c_{r-1}(x))$ is a bijective map from $\F_{p^r}$ to $\F_p^r$.
For an integer $M\ge1$, define the normalized $M$-{\bf Dirichlet mask} function by
\begin{equation}\label{eq:dirichlet}
{\mathsf D}_M(\xi)=\frac1M\sum_{j=0}^{M-1}e^{-2\pi ij\xi}
=e^{-\pi i(M-1)\xi}\frac{\sin(\pi M\xi)}{M\sin(\pi \xi)}
\quad\textup{when}~ \xi\notin\mathbb Z,
\end{equation}
and ${\mathsf D}_M(\xi)=1$ when $\xi\in\mathbb Z$.
The function ${\mathsf D}_M$ is the Fourier transform of the uniform probability measure on $\{0,1,\ldots,M-1\}\subset\mathbb R$. The main goal of this section is to prove the following theorem via the Weil bound.

\begin{theorem}\label{thm3.1-Weil}
Let
\[
\mathcal B
=
\left\{
b(x):x\in\mathbb F_{p^r}
\right\},
\]
where the digit map $b(x)$ is defined in \eqref{eq:bdef}. Then
\[
\sup_{\xi\in\mathbb R}
\left|
\mathsf M_{\mathcal B}(\xi)-{\mathsf D}_{p^q}(\xi)
\right|
\le
(d-1)p^{-r/2}
\bigl(2\log(2p)\bigr)^q.
\]
\end{theorem}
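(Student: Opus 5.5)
We apply Proposition \ref{Prop2.3} to the injective digit map $b$ in \eqref{eq:bdef}, so that
\[
\mathsf M_{\mathcal B}(\xi)=\sum_{{\bf a}\in\F_p^q}\Bigl(\prod_{j=0}^{q-1}\widehat{f_{j,\xi}}(a_j)\Bigr)S({\bf a}).
\]
The argument then has three steps.

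\textbf{The zero frequency.} For ${\bf a}=0$ we have $S(0)=1$. Moreover $\prod_j\widehat{f_{j,\xi}}(0)=\prod_{j=0}^{q-1}\mathsf D_p(p^j\xi)$, and this equals $\mathsf D_{p^q}(\xi)$ because $\{0,\dots,p^q-1\}=\sum_j p^j\{0,\dots,p-1\}$. Hence
\[
\mathsf M_{\mathcal B}(\xi)-\mathsf D_{p^q}(\xi)=\sum_{{\bf a}\ne0}\Bigl(\prod_j\widehat{f_{j,\xi}}(a_j)\Bigr)S({\bf a}),
\]
and it suffices to show $|S({\bf a})|\le(d-1)p^{-r/2}$ for every ${\bf a}\neq0$, together with $\sum_{a\in\F_p}|\widehat{f_{j,\xi}}(a)|\le 2\log(2p)$ uniformly in $j$ and $\xi$. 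Multiplying these bounds over the $q$ coordinates gives the theorem.

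\textbf{Character sums.} Each $g_j$ is of the form $c_i(x^k)=\Tr(C_ix^k)$ with $1\le k\le h+1=d$, where $k=1$ exactly for the first $r$ indices. Since $\Tr$ is $\F_p$-linear and the $a_j\in\F_p$,
\[
\sum_j a_jg_j(x)=\Tr(P(x)),\qquad P(X)=\sum_{k=1}^{d}A_kX^k,\quad A_k=\sum_{i}a_{(i,k)}C_i.
\]
Here the inner sum runs over those coordinates $i$ of $x^k$ that actually occur among the $g_j$. With $\psi=e_p\circ\Tr$, a nontrivial additive character by Lemma \ref{lem:trace-property}, we get $S({\bf a})=p^{-r}\sum_x\psi(P(x))$.

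The main point is that ${\bf a}\ne0$ forces $P\ne0$. The $C_i$ are linearly independent over $\F_p$: the map $A\mapsto\Tr(A\,\cdot)$ is an isomorphism by Lemma \ref{lem:trace}, and the functionals $c_0,\dots,c_{r-1}$ are independent. So if some $a_{(i,k)}\ne0$ then $A_k\ne0$. Now split into cases according to $e=\deg P$.

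If $e\ge2$, Theorem \ref{thm:weil} applies, since $p>h+1=d\ge e$, and gives $|S|\le(e-1)p^{-r/2}\le(d-1)p^{-r/2}$.

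If $e=1$, then $P(x)=A_1x$ with $A_1\ne0$, and $x\mapsto A_1x$ is a bijection of $\F_{p^r}$. Hence the sum equals $\sum_{w}\psi(w)=0$, as in \eqref{sum-zero}.

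\textbf{The $\ell^1$ bound.} This is a geometric sum. With $\theta=p^j\xi$,
\[
\widehat{f_{j,\xi}}(a)=\frac1p\sum_{t=0}^{p-1}e^{-2\pi i t(\theta+a/p)},
\qquad\text{so}\qquad
|\widehat{f_{j,\xi}}(a)|\le\min\Bigl(1,\frac{1}{2p\,\|\theta+a/p\|}\Bigr),
\]
using $|\sin\pi u|\ge2\|u\|$, where $\|\cdot\|$ denotes distance to the nearest integer. As $a$ ranges over $\F_p$, the points $\theta+a/p$ are $1/p$-spaced modulo $1$. At most two of them lie within distance $1/(2p)$ of an integer and contribute at most $1$ each. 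The others have distances at least $(2k-1)/(2p)$, at most two for each $k$, giving a harmonic tail of size at most $\sum_{k\le p/2}\frac{1}{2k-1}$. Collecting these gives $\le2\log(2p)$. This bookkeeping is the step most likely to need care to land on exactly the constant $2\log(2p)$; if the clean bound fails for small $p$, I would use the cruder estimate $1+\log p$, verify that it is $\le 2\log(2p)$, and check the small cases directly.
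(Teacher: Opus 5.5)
Your proof is correct and follows essentially the same route as the paper:
\begin{itemize}
\item the expansion of Proposition~\ref{Prop2.3};
\item the orthogonality observation that the only surviving linear frequency is $\mathbf a=0$ (your degree-one case is exactly the paper's $\mathcal L\setminus\{0\}$, where $S(\mathbf a)=0$);
\item the Weil bound for $\deg P_{\mathbf a}\ge2$, justified by the $\F_p$-independence of the $C_i$ (the paper's dual-basis argument);
\item the uniform $\ell^1$ bound of Lemma~\ref{lem:coeffsum}.
\end{itemize}
There is one small slip in that last step: each $k\ge1$ can contribute up to $2/(2k-1)$, not $1/(2k-1)$. The corrected total $1+2\sum_{k=1}^{(p-1)/2}\frac{1}{2k-1}\le 3+\log(p-2)$ is still at most $2\log(2p)$ for every $p\ge3$, so no small-case checking is needed. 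The paper instead sorts the points by distance to an integer, which gives $1+H_{p-1}$.
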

 We remark that, as noted in the introduction, a probabilistic analogue of the
mask approximation estimate in Theorem~\ref{thm3.1-Weil}
follows from \cite[Lemma 6.2]{LabaPramanik2009}. Indeed,  the following holds: for every integer $p\ge2$ and integers $ r, q $ with $1\le r<q$, there exists a set $ \mathcal{B} \subseteq\{0,1,\ldots,p^q-1\}$ with $ | \mathcal{B} |=p^r, $
such that
\[
\sup_{\xi\in\mathbb R}
\left|{\mathsf M}_{\mathcal{B}}(\xi)-\mathsf D_{p^q}(\xi)\right|
\le C\,p^{-r/2}\sqrt{q\log(2p)}.
\]
From \cite[Lemma 6.2]{LabaPramanik2009}, by taking $M=N=p^q$ and $t = p^r$ in their lemma,  it can be shown for all $ k \in \mathbb{Z} $,
\begin{equation}\label{mean-sup}
    \sup_{k\in \mathbb{Z}} \left\vert \mathsf{M}_{\mathcal{B}}\left( \dfrac{k}{N^2} \right) - {\mathsf D}_{N}\left( \dfrac{k}{N^2} \right)  \right\vert \le \sqrt{ \dfrac{32\log(8N^3)}{t} }. 
\end{equation}
For any $ \xi \in \mathbb{R} $, define $ F(\xi) = \mathsf{M}_{\mathcal{B}}(\xi) - \mathsf{D}_N(\xi) $. Direct calculation gives that  
\begin{equation}\label{derivative-upp}
    \vert F'(\xi) \vert \le 4\pi( N-1 ),
\end{equation} 
for all $ \xi \in \mathbb{R} $. For every $ \xi \in \mathbb{R} $, there exists $ k \in \mathbb{Z} $ such that $ \left\vert \xi - \dfrac{k}{N^2} \right\vert \le \dfrac{1}{2N^2}. $
By \eqref{mean-sup} and \eqref{derivative-upp}, we obtain
\begin{align*}
|F(\xi)|
&\le \left|F\left(\frac{k}{N^2}\right)\right|
+\sup_{u\in\mathbb R}|F'(u)|
 \left|\xi-\frac{k}{N^2}\right|\\
&\le \sqrt{\frac{32\log(8N^3)}{t}}+\frac{2\pi}{N}\\
&= p^{-r/2}\sqrt{32\log 8+96q(\log p)}
+2\pi p^{-q}\\
&\le  C p^{-r/2}\sqrt{q\log(2p)},
\end{align*}
for some $C>0$ (since $r<q$, the first term dominates).
This gives the {\it probabilistic} version of Theorem \ref{thm3.1-Weil}. In fact, interested readers can replicate the argument in Section \ref{Sec4} and \ref{Sec5} to construct a probabilistic Moran Salem measure. However, spectrality cannot be guaranteed in the probabilistic construction. 

The rest of this section will be devoted to proving the deterministic construction in Theorem \ref{thm3.1-Weil}.

\subsection{Some elementary estimates} We first begin with an elementary lemma.
\begin{lemma}
\label{lem:geometric-sum}\label{sum_esti1}
Let $p$ be a positive integer. For every $\phi\in\mathbb{R}$,
\[
\left|\sum_{x=0}^{p-1} e^{-2\pi i\phi x}\right|
\le
\min\left\{
p,\frac{1}{2\|\phi\|}
\right\},
\]
where
\[
\|\phi\|
:=\min_{n\in\mathbb{Z}}|\phi-n|
\]
denotes the distance from $\phi$ to the nearest integer, and $1/0$ is interpreted as $+\infty$.
\end{lemma}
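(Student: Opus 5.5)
The bound $p$ is immediate from the triangle inequality, since each of the $p$ terms has modulus one. So it only remains to show that the sum is at most $1/(2\|\phi\|)$ when $\phi\notin\mathbb{Z}$; when $\phi\in\mathbb{Z}$ we have $\|\phi\|=0$ and the second bound is $+\infty$ by convention.

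For $\phi\notin\mathbb{Z}$ we have $e^{-2\pi i\phi}\neq 1$, so we sum the geometric series:
\[
\left|\sum_{x=0}^{p-1}e^{-2\pi i\phi x}\right|
=\frac{|1-e^{-2\pi i\phi p}|}{|1-e^{-2\pi i\phi}|}
\le \frac{2}{|1-e^{-2\pi i\phi}|}
=\frac{2}{2|\sin(\pi\phi)|}
=\frac{1}{|\sin(\pi\phi)|}.
\]
The function $|\sin(\pi\phi)|$ is $1$-periodic and even in $\phi$, so it equals $\sin(\pi\|\phi\|)$ with $\|\phi\|\in(0,1/2]$.

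The only non-trivial step is the lower bound on the sine. By concavity of $\sin$ on $[0,\pi/2]$ (Jordan's inequality), $\sin(\pi t)\ge 2t$ for $t\in[0,1/2]$. Applying this with $t=\|\phi\|$ gives $|\sin(\pi\phi)|\ge 2\|\phi\|$. Substituting into the display yields
\[
\left|\sum_{x=0}^{p-1}e^{-2\pi i\phi x}\right|\le \frac{1}{2\|\phi\|}.
\]
Combining this with the trivial bound $p$ gives the stated minimum.
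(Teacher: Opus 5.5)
Your proof is correct and follows essentially the same route as the paper: you use the triangle inequality for the bound $p$, the geometric series to get $1/|\sin(\pi\phi)| = 1/\sin(\pi\|\phi\|)$, and Jordan's inequality $\sin x\ge (2/\pi)x$ on $[0,\pi/2]$. The only cosmetic difference is that you bound the numerator $|1-e^{-2\pi i\phi p}|$ by $2$ directly, whereas the paper writes the ratio as $|\sin(\pi p\phi)|/|\sin(\pi\phi)|$ before bounding $|\sin(\pi p\phi)|\le 1$.
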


\begin{proof}
By the triangle inequality,
\[
\left|\sum_{x=0}^{p-1} e^{-2\pi i\phi x}\right|
\le \sum_{x=0}^{p-1}\left|e^{-2\pi i\phi x}\right|
=p.
\]
If $\Vert\phi\Vert=0$, then $\phi\in\mathbb{Z}$ and the sum
equals $p$, so the claimed bound holds. Suppose now that $\Vert\phi\Vert>0$.
The geometric series formula gives
\[
\left|\sum_{x=0}^{p-1} e^{-2\pi i\phi x}\right|
=
\left|\frac{1-e^{-2\pi i p\phi}}{1-e^{-2\pi i\phi}}\right|
=
\frac{|\sin(\pi p\phi)|}{|\sin(\pi\phi)|}
\le \frac{1}{\sin(\pi\Vert\phi\Vert)}\leq \frac{1}{2\|\phi\|}.
\]
since  $\sin x\ge (\frac{2}{\pi})x$ for all $x\in [0,\pi/2]$.
Combining this with the bound by $p$ proves the lemma.
\end{proof}

\begin{lemma}\label{lem:coeffsum}
Let $p$ be a prime, and represent the elements of
$\mathbb{F}_p$ by $0,\ldots,p-1$.
For every $\theta\in\mathbb{R}$, the function
\[
f_\theta(x)=e^{-2\pi i\theta x},
\qquad x\in\mathbb{F}_p,
\]
satisfies
\[
\sum_{a\in\mathbb{F}_p}|\widehat{f}_\theta(a)|
\le 2\log(2p).
\]
\end{lemma}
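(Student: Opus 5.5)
We compute $\widehat{f}_\theta(a)$ explicitly as a geometric sum and then sum the resulting bounds over $a$ using Lemma~\ref{lem:geometric-sum}. By definition,
\[
\widehat{f}_\theta(a)=\frac1p\sum_{x=0}^{p-1}e^{-2\pi i\theta x}e^{-2\pi i a x/p}
=\frac1p\sum_{x=0}^{p-1}e^{-2\pi i(\theta+a/p)x}.
\]
Here $a$ is represented in $\{0,\ldots,p-1\}$. This is legitimate because $e_p(-ax)$ depends only on $a,x$ modulo $p$, and $x$ ranges over the integer representatives.

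Applying Lemma~\ref{lem:geometric-sum} with $\phi=\theta+a/p$ gives
\[
|\widehat{f}_\theta(a)|\le \min\Bigl\{1,\ \frac{1}{2p\,\|\theta+a/p\|}\Bigr\}.
\]
So it suffices to bound $\sum_{a=0}^{p-1}\min\{1,(2p\|\theta+a/p\|)^{-1}\}$ by $2\log(2p)$.

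The points $\theta+a/p$ for $a=0,\ldots,p-1$ form a coset of $\frac1p\mathbb Z$ modulo $1$. Hence the quantities $p\|\theta+a/p\|$, which measure distance to $p\mathbb Z$ after scaling by $p$, are the distances from $p\theta+a$ to the nearest multiple of $p$. Write $p\theta=k+\delta$ with $k\in\mathbb Z$ and $\delta\in[0,1)$. As $a$ runs over a complete residue system, $p\theta+a$ runs modulo $p$ over $\delta+j$, $j=0,\ldots,p-1$.

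The distances to $p\mathbb Z$ can therefore be listed as $\delta, 1-\delta, 1+\delta, 2-\delta,\ldots$. Pairing them up, at most two values lie in $[0,1)$, and for each $n\ge1$ at most two values lie in $[n,n+1)$, with $n\le p/2$. Each value in $[0,1)$ contributes at most $1$ via the trivial bound. Each value $t\ge n$ contributes at most $1/(2n)$.

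This gives
\[
\sum_a|\widehat f_\theta(a)|\le 2+2\sum_{n=1}^{\lfloor p/2\rfloor}\frac{1}{2n}= 2+H_{\lfloor p/2\rfloor}\le 2+1+\log(p/2).
\]
It remains to check $3+\log(p/2)\le 2\log(2p)=2\log 2+2\log p$. This reduces to $3\le 3\log2+\log p$, which holds once $p\ge 3$ (for $p=3$, $3\log2+\log3\approx3.18$). The case $p=2$ holds directly, since the sum is at most $2\le 2\log4$.

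The only delicate step is the counting. We must make sure that the numbers $p\|\theta+a/p\|$, for $a\in\mathbb F_p$, have at most two of them in each unit interval $[n,n+1)$, and lie in $[0,p/2]$. This follows because they are exactly the values $\min(\delta+j,\,p-\delta-j)$ for $j=0,\ldots,p-1$. If the constant is tight for small $p$, we can be slightly more careful in the first block: the two smallest terms sum to at most $2$ anyway, and a crude bound $H_m\le 1+\log m$ leaves enough room.
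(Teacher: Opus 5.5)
Your proof is correct and follows essentially the same route as the paper. Both use the geometric-sum lemma to reduce to $\sum_a \min\{1,(2p\|\theta+a/p\|)^{-1}\}$, then exploit the $1/p$-spacing of the points $\theta+a/p$ modulo $1$. The only difference is the final count: the paper orders the points by $|t_j|$ and uses $2|t_j|\ge (j-1)/p$ to get $1+H_{p-1}$, while you place at most two distances in each unit shell to get $2+H_{\lfloor p/2\rfloor}$, which is why you need the separate check at $p=2$ (which you supply).
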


\begin{proof}
By the definition of the normalized Fourier transform,
\[
\widehat{f}_\theta(a)
=\frac{1}{p}\sum_{x=0}^{p-1}
e^{-2\pi i(\theta+a/p)x}.
\]
Applying Lemma \ref{sum_esti1} with $\phi=\theta+a/p$ gives
\begin{equation}\label{eq:coeffpoint}
|\widehat{f}_\theta(a)|
\le
\min\left\{
1,\frac{1}{2p\Vert\theta+a/p\Vert}
\right\}.
\end{equation}

Let $t_1,\ldots,t_p\in[-1/2,1/2)$ be the representatives
of the points $\theta+a/p$ modulo $1$, ordered such that
\[
|t_1|\le |t_2|\le\cdots\le |t_p|.
\]
Any two distinct representatives are at least $1/p$ apart.
For each $2\le j\le p$, the $j$ points $t_1,\ldots,t_j$
lie in the interval $[-|t_j|,|t_j|]$. As each distinct $t_j$ is at least $1/p$ apart, the interval $ [-|t_j|,|t_j|] $ has length at least $(j-1)/p$, it follows that
\begin{equation}\label{2tj}
    2|t_j|\ge\frac{j-1}{p}.
\end{equation}

For $ 1 \le j \le p $, let $a_j\in\mathbb{F}_p$ be the unique element
such that $ \theta+a_j/p\equiv t_j (\textup{mod }1)$. Since $t_j\in[-1/2,1/2)$, we have
$\|\theta+a_j/p\|=|t_j|$.
Thus, by \eqref{eq:coeffpoint} and the inequality
\eqref{2tj}, we have
\[
|\widehat{f}_\theta(a_j)|
\le \frac{1}{2p|t_j|}
\le \frac{1}{j-1},
\]
for $ 2\le j\le p. $

By using this and bounding the first term $ |\widehat{f}_\theta(a_1)| $ by $ 1 $,
we obtain
\[
\sum_{j=1}^{p}|\widehat{f}_\theta(a_j)|
\le 1+\sum_{j=2}^{p}\frac{1}{j-1}
=1+H_{p-1},
\]
where $H_{p-1}=\sum_{k=1}^{p-1}1/k$.
Finally, using $H_{p-1}\le 1+\log (p-1)$ and $p\ge2$, we conclude that
\[
\sum_{a\in\mathbb{F}_p}|\widehat{f}_\theta(a)|
\le 2+\log(p-1)
\le 2\log(2p).
\]
\end{proof}

\subsection{Proof of Theorem \ref{thm3.1-Weil}} To prove Theorem \ref{thm3.1-Weil}, we will first need to compute the terms in the Fourier expansion (\ref{eq:maskexpansion}). 
$$
{\mathsf M}_{{\mathcal B}}(\xi)=\sum_{{\bf a}=(a_0,\ldots,a_{q-1})\in\F_p^q}
\left(\prod_{j=0}^{q-1}\widehat{ f_{j,\xi}}(a_j)\right)S({\bf a}),
$$ 
where $S({\bf a})=\frac1{p^r}\sum_{x\in {\mathbb F}_{p^r}}e_p\left(\sum_{j=0}^{q-1}a_j g_j(x)\right)$ with $g_j$ defined in (\ref{eq-g_j}). Let   
\begin{equation}\label{eq:L}
\mathcal L
=
\left\{
\mathbf a=(a_0,\ldots,a_{q-1})\in\F_p^q:
a_r=\cdots=a_{q-1}=0
\right\}.
\end{equation}
There will be two cases depending whether ${\bf a}$ belongs to ${\mathcal L}$ or not. They will be studied in  Proposition \ref{prop:regular} and Proposition \ref{prop:polynomial-phase} 
\begin{proposition}\label{prop:regular}.
 We have 
 \begin{equation}\label{sum_L-main}
  \sum_{\mathbf a\in\mathcal L}
\left(
\prod_{j=0}^{q-1}\wh f_{j,\xi}(a_j)
\right)S(\mathbf a) = {\mathsf D}_{p^q}(\xi)  
\end{equation}
where ${\mathsf D}_{p^q}(\xi)$ is the $p^q$-Dirichlet mask defined in (\ref{eq:dirichlet}).
\end{proposition}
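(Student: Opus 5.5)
We need to show that the sum over $\mathbf a\in\mathcal L$ equals ${\mathsf D}_{p^q}(\xi)$. The plan is to compute $S(\mathbf a)$ exactly on $\mathcal L$, show that it is the indicator of $\mathbf a=0$, and then identify the surviving product of Fourier coefficients with the Dirichlet mask.

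For $\mathbf a\in\mathcal L$ only the coordinates $a_0,\ldots,a_{r-1}$ can be nonzero, and by \eqref{eq-g_j} we have $g_j=c_j$ for $j<r$. Hence
\[
S(\mathbf a)=\frac1{p^r}\sum_{x\in\F_{p^r}}e_p\Bigl(\sum_{j=0}^{r-1}a_jc_j(x)\Bigr).
\]
Since $C$ in \eqref{isomorphism} is a bijection onto $\F_p^r$, we can substitute $(c_0(x),\ldots,c_{r-1}(x))=(t_0,\ldots,t_{r-1})$ ranging over all of $\F_p^r$. The sum then factors as
\[
S(\mathbf a)=\prod_{j=0}^{r-1}\frac1p\sum_{t\in\F_p}e_p(a_jt).
\]
This product equals $1$ if all $a_j=0$ and $0$ otherwise, by orthogonality of additive characters of $\F_p$. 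So on $\mathcal L$ only $\mathbf a=\mathbf 0$ survives, and the left side of \eqref{sum_L-main} equals $\prod_{j=0}^{q-1}\widehat{f_{j,\xi}}(0)$.

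Next we compute each factor:
\[
\widehat{f_{j,\xi}}(0)=\frac1p\sum_{t=0}^{p-1}e^{-2\pi i p^j\xi t}={\mathsf D}_p(p^j\xi).
\]
It remains to check the telescoping identity
\[
\prod_{j=0}^{q-1}{\mathsf D}_p(p^j\xi)={\mathsf D}_{p^q}(\xi).
\]
This holds because every integer $n\in\{0,\ldots,p^q-1\}$ has a unique base-$p$ expansion $n=\sum_j t_jp^j$ with digits $t_j\in\{0,\ldots,p-1\}$. Expanding the product therefore gives
\[
\frac1{p^q}\sum_{n=0}^{p^q-1}e^{-2\pi i n\xi},
\]
which is exactly the definition \eqref{eq:dirichlet}. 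This also covers the convention at $\xi\in\mathbb Z$, since there both sides equal $1$.

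There is no serious obstacle here. The only points needing care are the justification of the change of variables via the bijectivity of $C$, and the observation that the coordinates $a_r,\ldots,a_{q-1}$, which multiply the nonlinear digit functions $\phi_j$, are zero on $\mathcal L$, so those functions never enter.
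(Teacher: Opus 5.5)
Your proposal is correct and follows the paper's proof essentially step by step. You reduce $S(\mathbf a)$ on $\mathcal L$ to a linear character sum, re-index via the bijection $C$ and use orthogonality so that only $\mathbf a=\mathbf 0$ survives, and then identify $\prod_{j}{\mathsf D}_p(p^j\xi)$ with ${\mathsf D}_{p^q}(\xi)$ through the uniqueness of base-$p$ expansions. Calling that last identity ``telescoping'' is a misnomer, since it is a digit-expansion identity, but the justification you give for it is the correct one.
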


\medskip
\begin{proof}
 For each  $\mathbf a\in\mathcal L$, \eqref{eq:Sdef} reduces to
\begin{equation}\label{eq:Slinear}
S(\mathbf a)
=
\frac1{p^r}\sum_{x\in \F_{p^r}}
e_p\left(
\sum_{j=0}^{r-1}a_jc_j(x)
\right).
\end{equation}
Recall that the coordinate map
$C(x)=\bigl(c_0(x),\ldots,c_{r-1}(x)\bigr)$ 
is an $\F_p$-linear bijection from (\ref{isomorphism}). Re-indexing the sum in
\eqref{eq:Slinear} by
\[
(y_0,\ldots,y_{r-1})
=
\bigl(c_0(x),\ldots,c_{r-1}(x)\bigr)
\]
gives
\begin{align}
S(\mathbf a)
&=
\frac1{p^r}
\sum_{(y_0,\ldots,y_{r-1})\in\F_p^r}
e_p\left(\sum_{j=0}^{r-1}a_jy_j\right)
\nonumber\\
&=
\prod_{j=0}^{r-1}
\left(
\frac1p\sum_{y_j\in\F_p}e_p(a_jy_j)
\right).
\label{eq:Sfactorized}
\end{align}
It is well-known that 
\[
\frac1p\sum_{y\in\F_p}e_p(ay)
=
\begin{cases}
1,&a=0,\\
0,&a\ne0.
\end{cases}
\]
Applying this relation to every factor in \eqref{eq:Sfactorized}
shows that for $ \mathbf a\in\mathcal L $,
\begin{equation}\label{eq:Slinearorthogonality}
S(\mathbf a)
=
\begin{cases}
1,
 & a_0=\cdots=a_{r-1}=0\\
0,
 & \text{otherwise}
\end{cases}
\end{equation}
Since every $\mathbf a\in\mathcal L$ already satisfies
$a_r=\cdots=a_{q-1}=0$, the first case in
\eqref{eq:Slinearorthogonality} occurs precisely when
\[
\mathbf a=\mathbf 0=(0,\ldots,0)\in\F_p^q.
\]
We can see that 
\begin{equation}\label{sum_L-0}
  \sum_{\mathbf a\in\mathcal L}
\left(
\prod_{j=0}^{q-1}\wh f_{j,\xi}(a_j)
\right)S(\mathbf a) = \prod_{j=0}^{q-1}\wh f_{j,\xi}(0),
\end{equation}
since by \eqref{eq:Slinearorthogonality}, every term vanishes except $\mathbf a=\mathbf 0$. For $ 0 \le j \le q - 1 $, by the definition of the finite Fourier transform and  recall that $f_{j,\xi}(z)=e^{-2\pi ip^j\xi z}$,
\[
\wh f_{j,\xi}(0)
=
\frac1p\sum_{z\in\F_p}f_{j,\xi}(z) = \frac1p\sum_{z=0}^{p-1}e^{-2\pi ip^j\xi z} = {\mathsf D}_p(p^j\xi).
\]
It follows from (\ref{sum_L-0}) that
\[
\sum_{\mathbf a\in\mathcal L}
\left(
\prod_{j=0}^{q-1}\wh f_{j,\xi}(a_j)
\right)S(\mathbf a) = \prod_{j=0}^{q-1}\wh f_{j,\xi}(0)
=
\prod_{j=0}^{q-1}{\mathsf D}_p(p^j\xi).
\]
Expanding this product gives 
\begin{align*}
\prod_{j=0}^{q-1}{\mathsf D}_p(p^j \xi)
&=
\prod_{j=0}^{q-1}
\left(
\frac1p\sum_{z_j=0}^{p-1}
e^{-2\pi i \xi p^jz_j}
\right)\\
&=
\frac1{p^q}
\sum_{z_0,\ldots,z_{q-1}=0}^{p-1}
e^{-2\pi i \xi\sum_{j=0}^{q-1}p^jz_j},
\end{align*}
for all $ \xi \in \mathbb{R} $. Since the above sum runs through all $z_i\in \{0,1\cdots, p-1\}$, it is exactly equal to ${\mathsf D}_{p^q}(\xi)$, we
conclude that the left hand side of \eqref{sum_L-main}
is exactly $ {\mathsf D}_{p^q}(\xi) $.
\end{proof}
\medskip

For brevity, we write \(\mathbf a\notin\mathcal L\) to mean \(\mathbf a\in\mathbb F_p^q\setminus\mathcal L\).
In the next proposition, we consider $ {\bf a} \not\in {\mathcal L} $ and we provide the crucial estimates. 
\begin{proposition}\label{prop:polynomial-phase}
    \begin{enumerate}
        \item
        For each ${\bf a} \not\in {\mathcal L}$, we have 
        \begin{equation}\label{eq:Saspolynomialsum}
        S({\bf a}) = \frac1{p^r} \sum_{x\in{\mathbb F}_{p^r}} e_p (\Tr(P_{{\bf a}}(x))).
        \end{equation} 
        where \begin{equation}\label{eq:Pa}
        P_{\bf a} (X) = A_1X+...+A_dX^d \in {\mathbb F}_{p^r}[X].
        \end{equation}
        is a polynomial of degree at least 2.
        \item  For ${\bf a} \not\in {\mathcal L}$, $|S({\bf a})|\le (d-1) p^{-r/2}$.
        \item For $ \xi \in \mathbb{R} $, the sum over ${\bf a} \not\in {\mathcal L}$ satisfies $$
        \left|\sum_{\mathbf a\not\in\mathcal L}
        \left( \prod_{j=0}^{q-1}\wh f_{j,\xi}(a_j)\right)S(\mathbf a)\right|\le (d-1)p^{-r/2}\cdot (2\log(2p))^q.
$$
    \end{enumerate}
\end{proposition}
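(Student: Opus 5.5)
The plan is to reduce each $S(\mathbf a)$ to a Weil sum by writing every digit function $g_j$ as a trace of a monomial. Then (2) follows from Theorem~\ref{thm:weil}, and (3) from the triangle inequality together with Lemma~\ref{lem:coeffsum}.

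\emph{Step 1: each $g_j$ is a trace of a monomial.} By construction, every $g_j$ has the form $g_j(x)=c_{k(j)}(x^{e(j)})$ for some $k(j)\in\{0,\ldots,r-1\}$ and some exponent $e(j)$. Here $e(j)=1$ for $j<r$, and $e(j)\in\{2,\ldots,h+1\}$ for $r\le j<q$. Since $c_k(y)=\Tr(C_ky)$ with $C_k\ne0$, we get $g_j(x)=\Tr(C_{k(j)}x^{e(j)})$.

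\emph{Step 2: assemble $P_{\mathbf a}$ and prove part (1).} Because $a_j\in\F_p$ and $\Tr$ is $\F_p$-linear (Lemma~\ref{lem:trace-property}), we have $a_j\Tr(y)=\Tr(a_jy)$. Also $e_p$ depends only on its argument modulo $p$, so reading $\Tr$ values as integer representatives is harmless. Hence
\[
\sum_{j=0}^{q-1}a_jg_j(x)=\Tr\bigl(P_{\mathbf a}(x)\bigr),\qquad P_{\mathbf a}(X)=\sum_{e=1}^{d}A_eX^e,\quad A_e=\sum_{j:\,e(j)=e}a_jC_{k(j)}.
\]
This gives \eqref{eq:Saspolynomialsum}, with $\deg P_{\mathbf a}\le h+1=d$.

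\emph{Step 3: the degree is at least $2$.} This is the step I expect to be the only real obstacle. Since $\mathbf a\notin\mathcal L$, some $a_j\ne0$ with $j\ge r$. Let $e\ge2$ be the largest exponent whose block $\{j: e(j)=e\}$ contains such a $j$. Within that block the indices $k(j)$ are distinct, so $A_e$ is an $\F_p$-combination of distinct $C_k$'s with not all coefficients zero. The elements $C_0,\ldots,C_{r-1}$ are $\F_p$-linearly independent. Indeed, the map $\Phi$ in Lemma~\ref{lem:trace} is an $\F_p$-isomorphism, and it sends $C_k$ to the coordinate functionals $c_k$, which form a dual basis and hence are linearly independent. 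Therefore $A_e\ne0$, and $P_{\mathbf a}$ has degree $e$ with $2\le e\le d<p$.

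\emph{Step 4: part (2).} Take $\psi(y)=e_p(\Tr(y))$. This is an additive character because $\Tr$ is additive, and it is nontrivial because $\Tr$ is onto $\F_p$. Theorem~\ref{thm:weil} applies since $p>d\ge e$, giving
\[
\Bigl|\sum_{x\in\F_{p^r}}\psi(P_{\mathbf a}(x))\Bigr|\le(e-1)p^{r/2}\le(d-1)p^{r/2}.
\]
Dividing by $p^r$ yields $|S(\mathbf a)|\le(d-1)p^{-r/2}$.

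\emph{Step 5: part (3).} Apply the triangle inequality, bound $|S(\mathbf a)|$ uniformly by part (2), and enlarge the range from $\mathbf a\notin\mathcal L$ to all of $\F_p^q$. The sum of products then factors:
\[
\sum_{\mathbf a\in\F_p^q}\prod_{j=0}^{q-1}|\widehat{f_{j,\xi}}(a_j)|=\prod_{j=0}^{q-1}\sum_{a\in\F_p}|\widehat{f_{j,\xi}}(a)|.
\]
Each factor is at most $2\log(2p)$ by Lemma~\ref{lem:coeffsum} with $\theta=p^j\xi$. This gives the bound $(d-1)p^{-r/2}(2\log(2p))^q$.
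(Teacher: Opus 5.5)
Your proposal is correct and follows essentially the same route as the paper. You rewrite the phase as $\Tr(P_{\mathbf a}(x))$, show the top-block coefficient is nonzero, apply the Weil bound with $\psi=e_p\circ\Tr$, and factor the enlarged sum via Lemma~\ref{lem:coeffsum}; the only cosmetic difference is that the paper groups the digits into functionals $\ell_k$ and proves $A_k\neq0$ by evaluating $\ell_k$ at $\alpha^{i(\nu)}$, whereas you write $A_e=\sum_{j:\,e(j)=e}a_jC_{k(j)}$ directly and use the $\F_p$-linear independence of the $C_k$, which is equivalent.
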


\begin{proof}
(1). 
Recall that
\[
m=q-r, \qquad h=\lceil m/r\rceil, \qquad d = h + 1,
\]
and that $\phi_1,\ldots,\phi_m$ are the first $m$ functions in the
ordered list
\[
c_0(x^2),\ldots,c_{r-1}(x^2),
c_0(x^3),\ldots,c_{r-1}(x^3),
\ldots,
c_0(x^d),\ldots,c_{r-1}(x^d).
\]
Consequently, for every $1\le\nu\le m$, there are unique indices
\begin{equation}\label{def-knu-inu}
    k(\nu)\in\{2,\ldots,d\}, \qquad
i(\nu)\in\{0,\ldots,r-1\},
\end{equation}
such that
\begin{equation}\label{eq:phicoordinate}
\nu - 1 = ( k(\nu) - 2 )r + i(\nu) \textup{ and } \phi_\nu(x)=c_{i(\nu)}\bigl(x^{k(\nu)}\bigr).
\end{equation}
For each $2\le k\le d$, define
\[
I_k
=
\{\nu\in\{1,\ldots,m\}:k(\nu)=k\}.
\]
Indeed, each \(I_k\) is nonempty and all except possibly \(I_d\) contain exactly \(r\) indices.

Define an $\F_p$-linear functional $\ell_1:{\mathbb F}_{p^r}\longrightarrow\F_p$
by
\begin{equation}\label{eq:ellone}
\ell_1(y)
=
\sum_{j=0}^{r-1}a_jc_j(y).
\end{equation}
For every \(2\le k\le d\), define  $\F_p$-linear functionals $\ell_k:{\mathbb F}_{p^r}\longrightarrow\F_p$ by
\begin{equation}\label{eq:ellk}
\ell_k(y)
=
\sum_{\nu\in I_k}
a_{r+\nu-1}c_{i(\nu)}(y),
\end{equation}
where an empty sum is defined to be zero. For $ x\in {\mathbb F}_{p^r} $, we have
\begin{align*}
\sum_{j=r}^{q-1}a_j\phi_{j-r+1}(x)&=\sum_{\nu=1}^{m}a_{r+\nu-1}\phi_\nu(x)\\
&=
\sum_{\nu=1}^{m}
a_{r+\nu-1}
c_{i(\nu)}\bigl(x^{k(\nu)}\bigr) \quad (\textup{by} ~\eqref{eq:phicoordinate} )\\
&=
\sum_{k=2}^{d}
\sum_{\nu\in I_k}
a_{r+\nu-1}c_{i(\nu)}(x^k) \\
&=
\sum_{k=2}^{d}\ell_k(x^k) \quad (\textup{by} ~\eqref{eq:ellk}).
\end{align*}
Together with \eqref{eq:ellone}, this shows that
\eqref{eq:Sdef} is
\begin{equation}\label{eq:phasebyfunctionals}
\sum_{j=0}^{r-1}a_jc_j(x)
+
\sum_{j=r}^{q-1}a_j\phi_{j-r+1}(x)
=
\ell_1(x)+\ell_2(x^2)+\cdots+\ell_d(x^d).
\end{equation}
By Lemma~\ref{lem:trace},
for every \(1\le k\le d\), there is a unique element \(A_k\in \F_{p^r}\)
such that for $ y\in {\mathbb F}_{p^r} $,
\begin{equation}\label{eq:elltrace}
\ell_k(y)
=
\Tr(A_ky).
\end{equation}
Applying \eqref{eq:elltrace} to \(y=x^k\) and using the linearity of the trace, \eqref{eq:phasebyfunctionals}
therefore becomes
\begin{align*}
\ell_1(x)+\ell_2(x^2)+\cdots+\ell_d(x^d)
&=
\Tr(A_1x)
+\Tr(A_2x^2)
+\cdots
+\Tr(A_dx^d)\\
&=
\Tr
\left(
A_1x+A_2x^2+\cdots+A_dx^d
\right).
\end{align*}
Combining this with \eqref{eq:Sdef} and \eqref{eq:phasebyfunctionals}, we have proved \eqref{eq:Saspolynomialsum} for some polynomial of the form \eqref{eq:Pa}.

\medskip

{
It remains to show that $\deg P_{\mathbf a}\ge2$. Since $\mathbf a\notin\mathcal L$, then there exists $r\le j<q$ such that $a_j\ne0$. Shifting the index to $\nu$, we have  $$\nu = j-r+1\in \{1,\cdots, m\}. $$
Using \eqref{def-knu-inu} and \eqref{eq:phicoordinate}, there exist unique $ k = k(\nu) \in \{ 2, \cdots, d \} $ and $ i(\nu) \in \{0,1, \cdots, r - 1\} $ such that
$$
    \nu - 1 = ( k - 2 )r + i(\nu). 
$$
In particular, $\nu\in I_k$. For all other $ \eta \in I_k $, similarly, there exists a unique $ i(\eta) \in \{0,1, \cdots, r- 1\} $ such that $ \eta - 1 = ( k - 2 )r + i(\eta). $ Note that $ i: I_k \to \{0,1, \cdots, r - 1\} $ is injective, it follows that $i(\eta)\ne i(\nu)$ as long as $\eta\ne \nu$. 

According to the definition of $ \{ c_m(\cdot) \}_{m=0}^{r-1} $ defined in \eqref{eq:digit-expansion}, $ \{ c_m(\cdot) \}_{m=0}^{r-1} $ is the dual basis for $\{\alpha^n\}_{n=0}^{r-1}$,  in the sense that for any $ m,n \in \{0,1, \cdots, r - 1\} $,
\[
c_{m}\bigl(\alpha^{n}\bigr) =
\begin{cases}
1,&m = n,\\
0,&m \ne n.
\end{cases}
\]
Using this property, the definition of $ \ell_k(\cdot) $ in \eqref{eq:ellk} and the injectivity of $ i(\cdot) $ of $ I_k $, it follows that 
\begin{align*}
\ell_k\bigl(\alpha^{i(\nu)}\bigr)
&= \sum_{\eta\in I_k}
a_{r+\eta-1} \cdot c_{i(\eta)}\bigl(\alpha^{i(\nu)}\bigr)\\
& = \sum_{\eta: i(\eta) = i(\nu)} a_{r+\eta-1} \\
&=a_{r+\nu-1} \\
& =a_j\ne0.
\end{align*}
Therefore, $\ell_k$ is a nonzero functional. By Lemma~\ref{lem:trace}, $\ell_k(y)=\Tr(A_k y)$.
If $A_k=0$, then $\ell_k$ would vanish identically.
Hence $A_k\ne0$, and since $k\ge2$, we conclude that
$ \deg P_{\mathbf a}\ge k\ge2. $} 


\medskip
\noindent (2).
Let
\begin{equation}\label{eq:actualdegree}
e(\mathbf a)
=
\max\{k\in\{1,\ldots,d\}:A_k\ne0\}.
\end{equation}
The preceding argument shows that $2\le e(\mathbf a)\le d.$
Moreover, the prime \(p\) was chosen so that \(p>d\). Hence, $ 2\le e(\mathbf a)\le d<p. $
Now define
\[
\psi: \F_{p^r} \longrightarrow\C,
\qquad
\psi(y)
=
e_p\left(\Tr(y)\right).
\]
This is an additive character of \( \F_{p^r} \). It is nontrivial because the
trace map is a nonzero $\F_p$-linear map from \(\F_{p^r} \) to \(\F_p\), and
is therefore surjective by Lemma \ref{lem:trace-property}. In particular, there exists \(y\in \F_{p^r}\) such that $\Tr(y)=1,$ and hence $\psi(y)=e_p(1)\ne1.$

With this notation, \eqref{eq:Saspolynomialsum} becomes
\[
S(\mathbf a)
=
\frac1{p^r}\sum_{x\in \F_{p^r}}\psi(P_{\mathbf a}(x)).
\]
The polynomial \(P_{\mathbf a}\) has degree
\(e(\mathbf a)\ge2\) and it is strictly less than $p$. By Theorem~\ref{thm:weil}, it follows
\[
\left|
\sum_{x\in \F_{p^r}}\psi(P_{\mathbf a}(x))
\right|
\le
\bigl(e(\mathbf a)-1\bigr)p^{r/2}.
\]
Dividing by \(p^r=|\F_{p^r}|\), we conclude that
\begin{equation}\label{eq:Sweilbound}
|S(\mathbf a)|
\le
\bigl(e(\mathbf a)-1\bigr)p^{-r/2}
\le
(d-1)p^{-r/2}.
\end{equation}
This shows (2).

\medskip

(3). By the triangle inequality and (2), for $ \xi \in \mathbb{R} $, we have 
$$
\left|\sum_{\mathbf a\not\in\mathcal L}
\left(
\prod_{j=0}^{q-1}\wh f_{j,\xi}(a_j)
\right)
S(\mathbf a)
\right|\le (d-1)p^{-r/2} 
\sum_{\mathbf a\not\in{\mathcal L}}
\prod_{j=0}^{q-1}
\left|\wh f_{j,\xi}(a_j)\right|.
$$
Since every ${\bf a}$ is in $\F_{p}^q$, we enlarge the summation set to the whole space and obtain
\begin{equation}\label{eq-bound-need}
\begin{aligned}
   \left|\sum_{\mathbf a\not\in\mathcal L}
\left(
\prod_{j=0}^{q-1}\wh f_{j,\xi}(a_j)
\right)
S(\mathbf a)
\right| \le &(d-1)p^{-r/2} 
\sum_{\mathbf a\in{\mathbb F}_{p}^q}
\prod_{j=0}^{q-1}
\left|\wh f_{j,\xi}(a_j)\right|\\
=& (d-1)p^{-r/2} \prod_{j=0}^{q-1}
\left(
\sum_{a_j\in\F_p}
\left|\wh f_{j,\xi}(a_j)\right|
\right).
\end{aligned}
\end{equation}
For each $ 0 \le j \le q - 1  $ and $ \xi \in \mathbb{R} $, we apply
Lemma~\ref{lem:coeffsum} with $
\theta=p^j\xi.$
Since the estimate in Lemma~\ref{lem:coeffsum} is uniform in
$\theta$, it follows that
\begin{equation}\label{eq:digitcoeffbound}
\sum_{a_j\in\F_p}
\left|\wh f_{j,\xi}(a_j)\right|
\le
2\log(2p)
\end{equation}
for every $0\le j<q$ and every $\xi\in\R$. Plugging  (\ref{eq:digitcoeffbound}) into (\ref{eq-bound-need}) obtains our desired answer. 
\end{proof}

\medskip
\noindent\textbf{Proof of Theorem \ref{thm3.1-Weil}}.
By \eqref{eq:maskexpansion}, for $ \xi \in \mathbb{R} $, we have
\[
{\mathsf M}_{\B}(\xi)
=
\sum_{\mathbf a\in\F_p^q}
\left(
\prod_{j=0}^{q-1}\wh f_{j,\xi}(a_j)
\right)
S(\mathbf a).
\]
Proposition \ref{prop:regular} shows that the contribution of all indices  from ${\mathcal L}$ is exactly ${\mathsf D}_{p^q}(\xi)$. Therefore, after subtracting this part, the remaining part becomes
\begin{equation}\label{eq:nonlinearremainder}
{\mathsf M}_{\B}(\xi)-{\mathsf D}_{p^q}(\xi)
=
\sum_{\mathbf a\not\in\mathcal L}
\left(
\prod_{j=0}^{q-1}\wh f_{j,\xi}(a_j)
\right)
S(\mathbf a).
\end{equation}
By Proposition \ref{prop:polynomial-phase} (3), we have immediately that 
\[
\sup_{\xi\in\R}|{\mathsf M}_{\B}(\xi)-{\mathsf D}_{p^q}(\xi)|
\le
(d-1)p^{-r/2}
\bigl(2\log(2p)\bigr)^q.
\]
This proves Theorem~\ref{thm3.1-Weil}.\qquad$\Box$

\section{Constructions of the target Moran sets}\label{Sec4}
In this section, we will define all the parameters required for the construction of the Moran sets, that will be proved to be both Salem and spectral in the next two sections.  

 For $ x \in \mathbb{R} $, let $\lfloor x\rfloor$ denote the greatest integer less than or equal to $x$, and $\lceil x\rceil$ denote the least integer greater than or equal to $x$. 

Fix $ 0 < s \le 1 $. The parameters below depend on $s$.
For simplicity, we suppress this dependence in the notation
throughout the remainder of the paper. If $ 0 < s < 1 $, set 
\begin{equation}\label{eq:qrdef}
    q_n = n + \left\lceil\frac2{s}\right\rceil, \quad r_n = \left\lfloor sq_n \right\rfloor. 
\end{equation}
If $ s = 1 $, set $ q_n = n + 2, r_n = n + 1. $ It follows 
\begin{equation}\label{properties-rnqn}
    2 \le r_n < q_n, \quad q_n \to \infty, \quad \frac{r_n}{q_n} \to s.
\end{equation}
The exact expression of $r_n,q_n$ is not important. The subsequent argument uses the properties in \eqref{properties-rnqn}.  With the above choices, one has $ r_n \ge 2 $ for every $ n $. 
Define
\begin{equation}\label{eq:epsbeta}
d_n=1+\left\lceil\frac{q_n-r_n}{r_n}\right\rceil.
\end{equation}
Note that $d_n\ge 2$ for all $ n \in \mathbb{N} $ in this construction. Set $p_0=1$. Having chosen integers \(p_0,\ldots,p_{n-1}\), let $p_n$ be the least odd prime $p>d_n$ satisfying
\begin{equation}\label{eq:scale1}
p^{q_n}> (p_1^{q_1}\cdots p^{q_{n-1}}_{n-1})^{\,n}
\end{equation}
and
\begin{equation}\label{eq:scale2}
(d_n-1)
\bigl(2\log(2p)\bigr)^{q_n} \le p^{1/2}.
\end{equation}

\begin{lemma}\label{lem:primeexist}
For every $n$, there are infinitely many odd primes satisfying \eqref{eq:scale1} and \eqref{eq:scale2}. Consequently, the construction above yields a well-defined sequence of primes $\{p_n\}_{n\ge 1}$.
\end{lemma}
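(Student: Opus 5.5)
Both conditions \eqref{eq:scale1} and \eqref{eq:scale2} hold for all sufficiently large $p$, once $n$ and the earlier primes $p_1,\ldots,p_{n-1}$ are fixed. So the set of odd primes exceeding $d_n$ and satisfying both conditions contains all primes beyond some threshold, and Euclid's theorem shows it is infinite. The proof is then an induction on $n$. Assume $p_1,\ldots,p_{n-1}$ have been defined. Then $q_n$, $r_n$ and $d_n$ are fixed integers depending only on $n$ (and $s$), and
\[
R_n:=(p_1^{q_1}\cdots p_{n-1}^{q_{n-1}})^{n}
\]
is a fixed finite positive integer. For $n=1$ the empty product gives $R_1=1$, which is consistent with the convention $p_0=1$.

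\emph{Condition \eqref{eq:scale1}.} Since $q_n\ge 1$, we have $p^{q_n}\ge p$. Hence every prime $p>R_n$ satisfies $p^{q_n}>R_n$.

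\emph{Condition \eqref{eq:scale2}.} This is a comparison of a polylogarithm with a power. With $q=q_n$ and $D=d_n-1$ fixed, consider
\[
\frac{D\,(2\log(2p))^{q}}{p^{1/2}}.
\]
This ratio tends to $0$ as $p\to\infty$, because $(\log x)^q=o(x^{1/2})$ for any fixed $q$; one can check this, for instance, by substituting $x=e^{t}$ and comparing $t^q$ with $e^{t/2}$. So there is $P_n$ such that \eqref{eq:scale2} holds for every real $p\ge P_n$. If one wants an explicit threshold instead of a limit statement, the elementary inequality $\log y\le \frac{k}{e}\,y^{1/k}$ with $k=4q$ gives $(2\log(2p))^q\le C_q\,p^{1/4}$, and it then suffices to take $p^{1/4}\ge D\,C_q$.

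Combining the two, every odd prime $p>\max\{d_n,R_n,P_n\}$ satisfies both conditions. There are infinitely many primes, so infinitely many such $p$ exist, and the least one $p_n$ is well defined. This completes the inductive step, and hence the whole sequence $\{p_n\}$ is well defined.

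The only step needing care is the asymptotic comparison for \eqref{eq:scale2}. It is routine; the one point to note is that the exponent $q_n$ stays fixed while $p$ varies, which is why the limit argument applies.
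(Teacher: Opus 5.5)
Your proof is correct and follows the same argument as the paper: once $q_n,r_n,d_n$ and $p_1,\dots,p_{n-1}$ are fixed, both \eqref{eq:scale1} and \eqref{eq:scale2} hold for all sufficiently large $p$, so all sufficiently large primes qualify and the least one exists. Your version adds explicit thresholds (e.g.\ via $\log y\le \frac{k}{e}y^{1/k}$); you should rename your threshold $P_n$, since it clashes with the paper's $P_n=N_1\cdots N_n$.
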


\begin{proof}
For each fixed $n$, the quantities $ q_n, r_n $ and $ d_n $ are fixed. Condition \eqref{eq:scale1} holds for every sufficiently large $p$. Moreover, as $ p \to \infty $, 
$$ (d_n-1) \bigl(2\log(2p)\bigr)^{q_n}/ p^{1/2} \to 0. $$ 
Hence condition \eqref{eq:scale2} also holds for every sufficiently large $ p $. Therefore, every sufficiently large prime exceeds $d_n$, and satisfies both \eqref{eq:scale1} and \eqref{eq:scale2}. In particular, the least such prime $p_n$ exists at each step, so the construction of $ \{p_n\}_{n} $ is well defined.
\end{proof}

For each $n\ge1$, set
\begin{equation}\label{eq:NM}
N_n=p_n^{q_n},
\qquad
M_n=p_n^{r_n},
\qquad
P_n=N_1\cdots N_n,
\end{equation}
with the convention that $P_0=1$.
For each $n \ge 1$, let $b_n$ be the digit map defined in
\eqref{eq:bdef} with
$
p=p_n, r=r_n$ and $ q=q_n.
$
Define
\begin{equation}\label{def_Bn}
    \mathcal B_n = \left\{
b_n(x):x\in\F_{p_n^{r_n}}
\right\}.
\end{equation}
 For $ 0 < s \le 1 $, let 
\begin{equation}\label{eq:Ks}
K_s = \left\{\sum_{n=1}^\infty\frac{b_n}{N_1 \cdots N_n}:b_n\in \B_n\right\}.
\end{equation}
be the homogeneous Moran set generated by $ \{(N_n,\mathcal{B}_n)\}_{n=1}^{\infty} $. We notice that the above construction is completely deterministic once $ s $ is given and the digit $\B_n$ is based on a basis chosen from the finite field $\F_{p_n^{r_n}}$ over $\F_{p_n}$. 

Our main Theorem \ref{thm:main} will follow if we can prove the following theorem for our constructed Moran sets and measures.

\begin{theorem}\label{main-theorem-detail}
    For each $0 < s\le 1$, let $K_s$ be the Moran set defined in (\ref{eq:Ks}). Let $\mu_s$ be the associated equal-weighted Cantor-Moran measure defined in (\ref{cantor-moran_measure}). Then 
    \begin{enumerate}
        \item For every $0<\alpha<s/2$, there exists a constant
$D_{s,\alpha} > 0 $ such that for any $ \xi \in \mathbb{R} $, 
\[
|\wh\mu_s(\xi)|
\le
D_{s,\alpha}(1+|\xi|)^{-\alpha}.
\]
\item $K_s$ is a Salem set.
\item $\mu$ is a spectral measure with a spectrum 
\[
\Lambda=\bigcup_{n\ge1}\Lambda_n, \qquad \Lambda_n=L_1+P_1L_2+\cdots+P_{n-1}L_n,
\]
where 
\begin{equation}\label{def-Ln}
    L_n = p_n^{q_n-r_n}\left\{-\frac{p_n^{r_n}-1}{2},\cdots, -1,0,1\cdots,\frac{p_n^{r_n}-1}{2}\right\}.
\end{equation}
    \end{enumerate}
\end{theorem}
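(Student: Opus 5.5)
The plan has three parts, one for each item of Theorem~\ref{main-theorem-detail}, with (1) carrying essentially all the difficulty.

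\textbf{Part (1): Fourier decay.} Start from the product formula \eqref{eq-FT},
\[
\wh\mu_s(\xi)=\prod_{n\ge1}\mathsf M_{\B_n}(\xi/P_n).
\]
Since every factor has modulus at most $1$, it suffices to bound a few well-chosen factors. Given $|\xi|$ large, choose $n$ with $P_{n-1}\le |\xi|<P_n$. The factors that matter are those with index $n-1$, $n$ and $n+1$.

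For each such factor, Theorem~\ref{thm3.1-Weil} together with \eqref{eq:scale2} gives
\[
|\mathsf M_{\B_k}(\eta)|\le |\mathsf D_{N_k}(\eta)|+(d_k-1)p_k^{-r_k/2}(2\log(2p_k))^{q_k}\le |\mathsf D_{N_k}(\eta)|+p_k^{-(r_k-1)/2}.
\]
By Lemma~\ref{sum_esti1}, the Dirichlet term satisfies $|\mathsf D_{N_k}(\eta)|\le \min\{1,(2N_k\|\eta\|)^{-1}\}$.

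The case analysis runs as follows, with $\eta=\xi/P_n$.
\begin{itemize}
\item If $\|\xi/P_n\|\gtrsim 1/N_n^{1-\epsilon'}$, then the $n$-th factor is at most about $N_n^{-\epsilon'}+M_n^{-1/2+o(1)}$.
\item Otherwise $\xi/P_n$ is close to an integer. Since $|\xi|<P_n$, this integer is $0$ or $\pm1$. If it is $0$, then $|\xi|/P_n$ is small, and the $(n-1)$-th factor is evaluated at $\xi/P_{n-1}\in[1,N_n)$.
\end{itemize}
To keep the bookkeeping clean, I would instead pick $n$ with $P_{n-1}\le|\xi|<P_n$ and argue according to the size of $|\xi|/P_{n-1}$ relative to $N_n$.

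The key point is that for $\eta=\xi/P_n$ with $|\eta|\in[N_n^{-1},1/2]$, the Dirichlet bound gives $|\mathsf D_{N_n}(\eta)|\le (2N_n|\eta|)^{-1}=P_n/(2N_n|\xi|)=P_{n-1}/(2|\xi|)$. This is useful only when $|\xi|\gg P_{n-1}$. When $|\xi|$ is comparable to $P_{n-1}$, I would instead use the $(n-1)$-th factor at $\xi/P_{n-1}$, which lies near an integer. That factor is bounded by the error term $M_{n-1}^{-1/2+o(1)}$, because $\mathsf D_{N_{n-1}}$ vanishes at nonzero integers not divisible by $N_{n-1}$ and is Lipschitz with constant about $N_{n-1}$.

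Combining these gives
\[
|\wh\mu(\xi)|\lesssim \min\Bigl\{1,\ \frac{P_{n-1}}{|\xi|}\Bigr\}+M_n^{-1/2+o(1)}+M_{n-1}^{-1/2+o(1)}\times(\cdots),
\]
and the goal is to show this is at most $C|\xi|^{-\alpha}$. The essential inputs are:
\begin{itemize}
\item $M_n=N_n^{r_n/q_n}$ with $r_n/q_n\to s$;
\item $P_{n-1}\le N_n^{1/n}$ by \eqref{eq:scale1}, so $P_n=N_n^{1+o(1)}$ and $|\xi|\le P_n$ implies $M_n^{-1/2}\le |\xi|^{-s/2+o(1)}$.
\end{itemize}
The regime requiring care is $|\xi|$ just above $P_{n-1}$, where I must show that the $(n-1)$-th factor supplies $P_{n-1}^{-s/2+o(1)}\approx|\xi|^{-s/2}$. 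I expect this three-level matching to be the main obstacle.

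\textbf{Part (2): Salem property.} This follows from (1) together with $\dim_H K_s\le s$. For the upper bound, cover $K_s$ at stage $n$ by $M_1\cdots M_n$ intervals of length $P_n^{-1}$. Since $\log(M_1\cdots M_n)/\log P_n\to s$, which follows from $r_k/q_k\to s$ and the dominance of the last scale, the lower box dimension is at most $s$. Combined with $\dim_F\le\dim_H$, this gives equality.

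\textbf{Part (3): spectrality.} First I would verify that each $(N_n,\B_n,L_n)$ is a Hadamard triple. For $\ell=p^{q-r}t$ with $t\in\{-\tfrac{p^r-1}{2},\dots,\tfrac{p^r-1}{2}\}$,
\[
\frac{b(x)\ell}{N}\equiv \frac{t\sum_{j<r}c_j(x)p^j}{p^r}\pmod 1,
\]
because the higher digits contribute integers. As $x$ varies, $\sum_{j<r}c_j(x)p^j$ ranges over all of $\{0,\dots,p^r-1\}$, and $L_n$ is a complete residue system mod $p^r$ scaled by $p^{q-r}$, so the matrix is the Fourier matrix of $\Z_{p^r}$ and hence unitary.

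Next, $0\in L_n$ and $L_n$ lies in a symmetric interval of radius $N_n/2$. I would then invoke the An--He / An--Fu--Lai criterion (\cite{AnHe2014,AnFuLai2019}): spectrality holds provided $\inf_n\inf_{\lambda\in\Lambda_n}|\wh{\mu_{>n}}(\lambda/P_n)|>0$, where $\mu_{>n}$ denotes the tail measure.

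Here $|\lambda/P_n|\le 1/2\cdot(1+o(1))$ for $\lambda\in\Lambda_n$, because $\max|L_n|\le N_n/2$ and the earlier scales add geometrically less. For such small arguments, every tail factor $\mathsf M_{\B_k}(\cdot)$ with $k>n$ is evaluated at a point of size at most $1/N_{n+1}$, where it is close to $1$. Since $\sum_k 1/N_k$ converges, the product over the tail stays bounded below. The bound $|\lambda/P_n|\le 1/2$ must be handled carefully near the endpoint, but these points still give a factor bounded away from zero.
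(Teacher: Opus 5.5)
\textbf{Part (1) has a genuine gap.} Your Part (2) and the Hadamard-triple verification in Part (3) are fine and agree with the paper. The gap in Part (1) sits exactly in the regime you flag as the main obstacle, and your proposed remedy rests on a false statement.

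The function $\mathsf D_{N_{n-1}}$ is $1$-periodic with $\mathsf D_{N_{n-1}}(k)=1$ at every integer $k$; indeed $\mathsf M_{\B_{n-1}}(k)=1$ since $\B_{n-1}\subset\Z$. What vanishes at the integers $1\le k<N_n$ is $\mathsf D_{N_n}(k/N_n)$, the Dirichlet part of the \emph{$n$-th} factor. At $\xi=kP_{n-1}$ every factor with index $j\le n-1$ equals $1$, and the decay comes from $|\mathsf M_{\B_n}(k/N_n)|\le\eta_n$. So the $(n-1)$-th factor supplies no decay when $t=\xi/P_{n-1}$ is near an integer. Your combined bound has main term $\min\{1,P_{n-1}/|\xi|\}=\min\{1,1/t\}$, which carries no power of $P_{n-1}$, so it gives nothing for $t=O(1)$.

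The missing idea is that the two adjacent factors are complementary. The bound $|\mathsf D_{N_{n-1}}(t)|\le(2N_{n-1}\|t\|)^{-1}$ is small away from the integers, while $|\mathsf D_{N_n}(t/N_n)|\le\pi\|t\|/(2t)$ is small near them. Hence for $1\le t\le N_n/2$ their \emph{product} is at most $\pi/(4N_{n-1}t)$, whatever $\|t\|$ is. Writing $\mathsf M_{\B_j}=\mathsf D_{N_j}+e_j$ with $\|e_j\|_\infty=\eta_j$ and expanding gives
\[
|\mathsf M_{\B_{n-1}}(t)\,\mathsf M_{\B_n}(t/N_n)|\le\frac{\pi}{4N_{n-1}t}+\frac{\pi\eta_{n-1}}{4t}+\eta_n+\eta_{n-1}\eta_n .
\]
The error $\eta_{n-1}$ must appear multiplied by $1/t$; on its own it is far too large once $t$ is large. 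That $1/t$ is what your unspecified ``$(\cdots)$'' would have to be. Using $\eta_j\lesssim N_j^{-\gamma}$ and $N_{n-1}>P_{n-1}^{(n-1)/n}$ (a consequence of $N_{n-1}>P_{n-2}^{\,n-1}$), the right-hand side is $\lesssim\xi^{-\alpha}$.

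\textbf{The range $\xi$ close to $P_n$ is not treated.} Your bound on the $n$-th factor requires $|\xi/P_n|\le1/2$, so the range $N_n/2<t<N_n$ is left out. The paper handles it in two steps:
\begin{itemize}
\item It sets $u=N_n-t$ and uses periodicity and $\mathsf M_{\B}(-x)=\overline{\mathsf M_{\B}(x)}$ to reuse the adjacent-factor bound.
\item It brings in the $(n+1)$-th factor. With $a=t/N_n$, its Dirichlet part at $a/N_{n+1}$ is at most $\pi u/N_n$, because $\mathsf D_{N_{n+1}}(1/N_{n+1})=0$.
\end{itemize}
As $u\to0$ the factors $n-1$ and $n$ both tend to modulus $1$, so the decay must come from factor $n+1$.

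\textbf{Part (3): the first tail factor is not close to $1$.} $\mathsf M_{\B_{n+1}}$ is evaluated at $t/N_{n+1}$ with $t=\lambda/P_n$. The inequality $|t|<1/2$ holds strictly, since $|\lambda|\le(P_n-1)/2$; your $\frac12(1+o(1))$ does not suffice. However, $\sup_{\lambda\in\Lambda_n}|\lambda/P_n|\to1/2$, so the generic estimate $|\mathsf M_{\B}(x/N)|\ge\cos(\pi|x|)$ degenerates and gives no uniform lower bound. You need the Weil approximation again:
\[
|\mathsf M_{\B_{n+1}}(t/N_{n+1})|\ge|\mathsf D_{N_{n+1}}(t/N_{n+1})|-\eta_{n+1}\ge 2/\pi-\eta_{n+1}\ge 1/\pi
\]
for large $n$. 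Only the factors with index $k\ge n+2$ are genuinely close to $1$, with summable deviations.
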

We will prove parts~(1) and~(2) of Theorem~\ref{main-theorem-detail} in Section~\ref{proof_Salem}, and part~(3) in Section~\ref{proof_spectral}.

\section{Proof of the Salem property}\label{proof_Salem}\label{Sec5}

For $ n \ge 1 $, let $ r_n $, $ q_n $ and $ d_n $ be chosen in \eqref{eq:qrdef} and \eqref{eq:epsbeta} respectively. Let $p_n$ be the least odd prime $p>d_n$ satisfying \eqref{eq:scale1} and \eqref{eq:scale2}. Let $ N_n $ and $ \mathcal{B}_n $ be chosen in \eqref{eq:NM} and \eqref{def_Bn} respectively. According to \eqref{eq-FT}, let $ \mathsf M_{\mathcal B_n}( \cdot) $ be the mask function associated with $ \mathcal{B}_n $.

Throughout this section, we fix $ 0 < s \le 1 $. Let $ K_s $ be the homogeneous Moran set defined in \eqref{eq:Ks}. Let $\mu_s$ be the associated equal-weighted Cantor-Moran measure defined in (\ref{cantor-moran_measure}). In this section, we will show that $ K_s $ is a Salem set. To begin, for $ n \ge 1 $, we define
\begin{equation}\label{etan-def}
    \eta_n : = \left\| \mathsf M_{\mathcal B_n}-{\mathsf D}_{N_n} \right\|_\infty. 
\end{equation}
The following lemma provides quantitative estimates for the decay of $\eta_n$ and the growth of $N_n$.

\begin{lemma}\label{main-esti}
\label{lem:scaleconseq}
For every $0 < s \le 1$, the following statements hold.
\begin{enumerate}
\item[(i)] For $ n \ge 1 $, $ \eta_n \le 1 $. In addition, $\eta_n\to0$.

\item[(ii)] Let $0< \gamma < s / 2$. Then there exists a constant
$A_{s,\gamma}\ge1$ such that for $ n \ge 1 $,
\begin{equation}\label{eq:etauniform}
\eta_n\le A_{s,\gamma}N_n^{-\gamma}.
\end{equation}
\item[(iii)]  \begin{equation}\label{eq:separation}
    N_n>P_{n-1}^{\,n}. 
\end{equation}
Furthermore, $N_1 \ge  27$ and $ N_n \ge 27^{n(n-1)} $ for $ n \ge 2 $.
\end{enumerate}
\end{lemma}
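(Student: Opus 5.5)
The plan is to derive all three parts directly from Theorem \ref{thm3.1-Weil} together with the two defining conditions \eqref{eq:scale1} and \eqref{eq:scale2} on $p_n$. In each part we apply Theorem \ref{thm3.1-Weil} with $p=p_n$, $r=r_n$, $q=q_n$; then $d=d_n$ and $p_n>d_n$, so the hypotheses hold. This gives
\[
\eta_n\le (d_n-1)p_n^{-r_n/2}\bigl(2\log(2p_n)\bigr)^{q_n}.
\]

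\textbf{Part (i).} First, $\eta_n\le 1$ is trivial in a weaker form: both masks have modulus at most $1$, which only gives $\eta_n\le 2$. To get $\eta_n\le 1$ we use \eqref{eq:scale2}, which gives
\[
\eta_n\le p_n^{1/2}p_n^{-r_n/2}=p_n^{-(r_n-1)/2}.
\]
Since $r_n\ge 2$, this is at most $p_n^{-1/2}<1$. Moreover $r_n\to\infty$ and $p_n\ge 3$, so $\eta_n\to 0$.

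\textbf{Part (ii).} We compare exponents. Writing $N_n=p_n^{q_n}$, the bound $\eta_n\le p_n^{-(r_n-1)/2}$ becomes $\eta_n\le N_n^{-(r_n-1)/(2q_n)}$. By \eqref{properties-rnqn}, $(r_n-1)/(2q_n)\to s/2>\gamma$. Hence there is $n_0$ such that for $n\ge n_0$ the exponent exceeds $\gamma$, and since $N_n\ge 1$ we get $\eta_n\le N_n^{-\gamma}$ there. For the finitely many $n<n_0$ we use $\eta_n\le 1$ and set $A_{s,\gamma}=\max\{1,\max_{n<n_0}N_n^{\gamma}\}$. This step is straightforward.

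\textbf{Part (iii).} The inequality $N_n>P_{n-1}^n$ is exactly \eqref{eq:scale1}, since $P_{n-1}=p_1^{q_1}\cdots p_{n-1}^{q_{n-1}}$. For the lower bounds, we note $p_n\ge 3$ and $q_n\ge 3$: for $s<1$, $q_n\ge 1+\lceil 2/s\rceil\ge 3$, and for $s=1$, $q_n=n+2\ge 3$. This gives $N_n\ge 27$ for all $n$, in particular $N_1\ge 27$.

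For $n\ge 2$ we induct on the claim $P_{n-1}\ge 27^{n-1}$. This requires a bound on the product $P_{n-1}$, not only on $N_n$. Since each $N_k\ge 27$, we have $P_{n-1}\ge 27^{n-1}$ directly. Then \eqref{eq:separation} gives $N_n>P_{n-1}^n\ge 27^{n(n-1)}$.

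The main obstacle is mild bookkeeping in (i): making sure the crude bound yields $\eta_n\le 1$ rather than $2$. This requires using $r_n\ge 2$ together with \eqref{eq:scale2}, as done above.
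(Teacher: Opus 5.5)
Your proposal is correct and follows the paper's proof essentially step for step. You apply Theorem~\ref{thm3.1-Weil} together with \eqref{eq:scale2} to get $\eta_n\le p_n^{-(r_n-1)/2}=N_n^{-(r_n-1)/(2q_n)}$, read off (i) and (ii) from this exponent, and get (iii) from \eqref{eq:scale1} with $p_j,q_j\ge 3$. The differences are cosmetic: you use $r_n\to\infty$ rather than the exponent tending to $s/2$ for $\eta_n\to0$, and you take $A_{s,\gamma}=\max\{1,\max_{n<n_0}N_n^{\gamma}\}$ instead of $\max\{1,\max_{n<n_0}\eta_nN_n^{\gamma}\}$.
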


\begin{proof}
Applying the Weil bound in Theorem~\ref{thm3.1-Weil} with
$
p=p_n,
r=r_n,
q=q_n$ and $
d=d_n,
$
then using our condition \eqref{eq:scale2} and recall also that $N_n = p_n^{q_n}$, we obtain
\begin{equation}\label{eq:eta}
\begin{aligned}
\eta_n
& \le
(d_n-1) p_n^{-r_n/2} \bigl(2\log(2p_n)\bigr)^{q_n}\\
&\le p_n^{ -( r_n - 1 )/2 }\\
&= N_n^{- \frac{r_n - 1}{2q_n}}.
\end{aligned}
\end{equation}

For part~(i), since $ \frac{r_n - 1}{ 2q_n } > 0 $ for $ n \ge 1 $, by \eqref{eq:eta}, $ \eta_n \le 1 $. On the other hand, since $ \frac{r_n}{q_n} \to s$ and $  q_n \to \infty $,
we have $ \frac{r_n - 1}{2q_n} \to \frac{s}{2}  $. Moreover,
$N_n=p_n^{q_n}\ge3^{q_n}\to\infty$. It follows from
\eqref{eq:eta} that $\eta_n\to0$.

For part~(ii), fix $0<\gamma< s / 2$. Since $ \frac{r_n - 1}{2q_n} \to s/2$, choose
$n_\gamma\ge2$ such that $\frac{r_n - 1}{2q_n} \ge \gamma$ whenever
$n\ge n_\gamma$. Then for $n\ge n_\gamma,$ \eqref{eq:eta} gives
\[
\eta_n\le N_n^{-\frac{r_n - 1}{2q_n}}\le N_n^{-\gamma}.
\]
Define
\[
A_{s,\gamma}
=
\max\left\{
1,\max_{1\le n<n_\gamma}\eta_nN_n^{\gamma}
\right\}.
\]
Then \eqref{eq:etauniform} holds for every $n\ge1$.

For part~(iii), note that \eqref{eq:separation}  is just \eqref{eq:scale1} in the definition of $p_n$. For $ j \ge 1 $, $p_j\ge3$ and $q_j\ge3$. It follows that
$N_j=p_j^{q_j}\ge27$ for $ j \ge 1 $. Hence, $P_{n-1} = N_1 \cdots N_{n-1} \ge 27^{n-1}$.
Using this together with \eqref{eq:separation}, we obtain for $ n \ge 2 $,
\[
N_n>P_{n-1}^{\,n}\ge27^{n(n-1)},
\]
 proving part~(iii).
\end{proof}

 This is a briefly outline of the estimates. Part~(ii) shows that $\mathsf{M}_{\mathcal{B}_n}(\cdot)$ is well approximated by the $N_n$-Dirichlet mask ${\mathsf D}_{N_n}(\cdot)$. Consequently, an upper bound for ${\mathsf D}_{N_n}(\cdot)$ yields a corresponding upper bound for $\mathsf{M}_{\mathcal{B}_n}(\cdot)$, up to the approximation error. The error is around $N_n^{-\gamma}$ where $\gamma<s/2$. In the next two lemmas, we will estimate  ${\mathsf D}_{N_n}(\cdot)$, then ${\mathsf M}_{B_{n-1}}(\cdot){\mathsf M}_{B_{n}}(\cdot/N_n)$. Finally, we will  provide a power decay bound of  $ \vert \widehat{\mu}_s(\xi) \vert $ when  $\xi\sim P_n$ in Proposition \ref{prop:blockwise} in terms of powers of $P_{n-1}$ and $N_n$.  Using Lemma \ref{main-esti} (iii), we can compare it with $\xi$ to obtain the desired bound. 
\begin{lemma}\label{lem:Dbounds}
Let $M,N\ge2$ and $1\le t\le N/2$. 
If $\lVert t\rVert>0$, then
\begin{equation}\label{eq:Dbounds}
|{\mathsf D}_M(t)|\le\min\left\{1,\frac1{2M\lVert t\rVert}\right\},
\qquad
|{\mathsf D}_N(t/N)|\le\frac{\pi\lVert t\rVert}{2t}.
\end{equation}
If $\lVert t\rVert=0$, then $D_N(t/N)=0$.
\end{lemma}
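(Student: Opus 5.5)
The plan is to prove the two inequalities separately, both by reducing to the closed form of the Dirichlet mask in \eqref{eq:dirichlet}, and then to handle the degenerate case $\lVert t\rVert=0$.

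\emph{First inequality.} Here $M\ge2$ is an integer, so ${\mathsf D}_M(t)=\frac1M\sum_{j=0}^{M-1}e^{-2\pi i j t}$ is $1$-periodic in $t$. The trivial bound $|{\mathsf D}_M(t)|\le1$ comes from the triangle inequality. For the other term, apply Lemma~\ref{lem:geometric-sum} with $p=M$ and $\phi=t$. This gives $\bigl|\sum_{j=0}^{M-1}e^{-2\pi i j t}\bigr|\le \frac1{2\lVert t\rVert}$, and dividing by $M$ yields $|{\mathsf D}_M(t)|\le \frac1{2M\lVert t\rVert}$.

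\emph{Second inequality.} Since $1\le t\le N/2$, we have $0<t/N\le 1/2$, so $t/N\notin\Z$ and the closed form applies:
\[
|{\mathsf D}_N(t/N)|=\frac{|\sin(\pi t)|}{N\sin(\pi t/N)}.
\]
\begin{itemize}
\item Numerator: by periodicity and $|\sin x|\le|x|$, $|\sin(\pi t)|=\sin(\pi\lVert t\rVert)\le \pi\lVert t\rVert$.
\item Denominator: since $\pi t/N\in(0,\pi/2]$, Jordan's inequality $\sin x\ge \frac{2}{\pi}x$ gives $N\sin(\pi t/N)\ge N\cdot\frac2\pi\cdot\frac{\pi t}{N}=2t$.
\end{itemize}
Combining the two, $|{\mathsf D}_N(t/N)|\le \frac{\pi\lVert t\rVert}{2t}$.

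\emph{Degenerate case.} If $\lVert t\rVert=0$, then $t$ is an integer with $1\le t\le N/2$, so $t/N\notin\Z$. The sum $\sum_{j=0}^{N-1}e^{-2\pi i jt/N}$ is then a full sum of $N$-th roots of unity, $\frac{1-e^{-2\pi i t}}{1-e^{-2\pi i t/N}}=0$, hence ${\mathsf D}_N(t/N)=0$. The same conclusion also follows from the displayed closed form, since $\sin(\pi t)=0$.

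There is no genuine obstacle. The only point needing care is checking that $t/N$ lies in $(0,1/2]$, so that both the closed form and Jordan's inequality are legitimate; this is exactly what the hypothesis $1\le t\le N/2$ guarantees.
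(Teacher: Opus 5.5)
Your proof is correct and follows the paper's argument: the closed form of the Dirichlet mask, with $|\sin(\pi t)|=\sin(\pi\lVert t\rVert)\le\pi\lVert t\rVert$ in the numerator and Jordan's inequality $N\sin(\pi t/N)\ge 2t$ in the denominator, and $\sin(\pi t)=0$ in the integer case. The only cosmetic difference is that you get the first bound by citing Lemma~\ref{lem:geometric-sum} with $p=M$, whereas the paper redoes that same sine estimate inline.
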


\begin{proof}
First, if $0 < \lVert t\rVert\le1/2$, it is well known
\begin{equation}\label{sin_ineq}
    |\sin(\pi t)|=\sin\bigl(\pi\lVert t\rVert\bigr)
\ge2\lVert t\rVert,
\qquad
|\sin(\pi t)|\le\pi\lVert t\rVert.
\end{equation}
Recall \eqref{eq:dirichlet}, since $ t \not\in \mathbb{Z} $, using the first inequality in \eqref{sin_ineq},
$$ |{\mathsf D}_M(t)| = \left\vert \frac{\sin(\pi M t)}{M\sin(\pi t)} \right\vert \le \dfrac{ 1 }{2M \Vert t \Vert}. $$ Together with \eqref{eq:dirichlet} 
and the fact that
$|{\mathsf D}_M(t)|\le1$, the first inequality in \eqref{eq:Dbounds} follows.

Moreover, since $0< t/N \le 1 /2$, by the first inequality of \eqref{sin_ineq}, we have
$ N\sin(\pi t/N)\ge2t. $ Combining this with the second inequality of \eqref{sin_ineq}, it follows
\[
|{\mathsf D}_N(t/N)|
=\frac{|\sin(\pi t)|}{N\sin(\pi t/N)}
\le\frac{\pi\lVert t\rVert}{2t}.
\]

Finally, if $\lVert t\rVert=0$, then $t$ is an integer with
$1\le t<N$. Thus $\sin(\pi t)=0$ while $\sin(\pi t/N)\ne0$, and
therefore ${\mathsf D}_N(t/N)=0$.
\end{proof}

By applying the estimate of Dirichlet functions above, we have the following estimate of the product of adjacent mask functions. Indeed, we will see that the two or three consecutive mask factors determine all the Fourier decays. 
\begin{lemma}\label{lem:adjacent}
Let $n\ge2$. Then for $1\le t\le N_{n}/2$,
\begin{equation}\label{eq:adjacent}
|\mathsf M_{\mathcal B_{n-1}}(t) \mathsf M_{\mathcal B_n}(t/N_n)|
\le 2\left(
\frac{N_{n-1}^{-1}+\eta_{n-1}}{t}
+\eta_n
\right).
\end{equation}
\end{lemma}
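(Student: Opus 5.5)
We replace each mask by its Dirichlet approximation, using the definition of $\eta_n$ in \eqref{etan-def}. Then we estimate the product of the two Dirichlet factors with Lemma~\ref{lem:Dbounds}. Write
\[
\mathsf M_{\mathcal B_{n-1}}(t)=\mathsf D_{N_{n-1}}(t)+E_1,\qquad
\mathsf M_{\mathcal B_n}(t/N_n)=\mathsf D_{N_n}(t/N_n)+E_2,
\]
with $|E_1|\le\eta_{n-1}$ and $|E_2|\le\eta_n$. We always have $|\mathsf M_{\mathcal B_{n-1}}(t)|\le 1$, so
\[
|\mathsf M_{\mathcal B_{n-1}}(t)\mathsf M_{\mathcal B_n}(t/N_n)|
\le |\mathsf M_{\mathcal B_{n-1}}(t)|\,|\mathsf D_{N_n}(t/N_n)|+\eta_n .
\]
This handles the $\eta_n$ term, and it remains to bound $|\mathsf M_{\mathcal B_{n-1}}(t)|\,|\mathsf D_{N_n}(t/N_n)|$ by $2(N_{n-1}^{-1}+\eta_{n-1})/t$, up to the harmless factor.

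We split into two cases. If $\|t\|=0$, then Lemma~\ref{lem:Dbounds} gives $\mathsf D_{N_n}(t/N_n)=0$, because $1\le t\le N_n/2$. The product term vanishes and the bound is trivial. If $\|t\|>0$, Lemma~\ref{lem:Dbounds} gives $|\mathsf D_{N_n}(t/N_n)|\le \pi\|t\|/(2t)$. For the other factor we use
\[
|\mathsf M_{\mathcal B_{n-1}}(t)|\le|\mathsf D_{N_{n-1}}(t)|+\eta_{n-1}\le \frac{1}{2N_{n-1}\|t\|}+\eta_{n-1},
\]
again by Lemma~\ref{lem:Dbounds} with $M=N_{n-1}$. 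Multiplying the two bounds, the factor $\|t\|$ cancels the singular $1/\|t\|$. The $\eta_{n-1}$ piece is controlled by using $\|t\|\le 1/2$. This yields
\[
|\mathsf M_{\mathcal B_{n-1}}(t)|\,|\mathsf D_{N_n}(t/N_n)|\le \frac{\pi}{4N_{n-1}t}+\frac{\pi\eta_{n-1}}{4t}\le \frac{N_{n-1}^{-1}+\eta_{n-1}}{t},
\]
since $\pi/4<1$. Adding $\eta_n$, we get a bound even stronger than \eqref{eq:adjacent}, so the factor $2$ gives slack.

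The only delicate point is this cancellation of $\|t\|$ between the two Dirichlet estimates. For that reason we avoid the crude bound $|\mathsf M_{\mathcal B_{n-1}}|\le 1$ on the first factor and keep the $1/\|t\|$ form. We also check that the hypothesis $1\le t\le N_n/2$ is exactly what Lemma~\ref{lem:Dbounds} requires with $N=N_n$; the hypothesis $M\ge 2$ holds since $N_{n-1}\ge 27$.
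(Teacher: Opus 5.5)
Your proof is correct and follows essentially the same route as the paper: approximate both masks by Dirichlet masks, use Lemma~\ref{lem:Dbounds} so that the factor $\lVert t\rVert$ cancels the $1/\lVert t\rVert$ singularity, and treat integer $t$ separately via $\mathsf D_{N_n}(t/N_n)=0$. The only difference is bookkeeping. You peel off the error $E_2$ first using $|\mathsf M_{\mathcal B_{n-1}}(t)|\le 1$, which avoids the paper's four-term expansion, its cross term $\eta_{n-1}\eta_n$, and the use of $\eta_{n-1}\le 1$; this is why your bound comes out without the factor $2$.
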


\begin{proof}
We first suppose that $\lVert t\rVert>0$. Applying
Lemma~\ref{lem:Dbounds} by taking $ M = N_{n-1} $ and $ N = N_n $,
\begin{equation}\label{eq:DNn}
    \left|{\mathsf D}_{N_{n-1}}(t) \right|\le\min\left\{1,\frac1{2N_{n-1}\lVert t\rVert}\right\},
\qquad
\left| {\mathsf D}_{N_{n}}(t/N_{n}) \right|\le\frac{\pi\lVert t\rVert}{2t}.
\end{equation}
Multiplying the two inequalities above, we obtain
\begin{equation}\label{mul_DMDN}
    |{\mathsf D}_{N_{n-1}}(t){\mathsf D}_{N_{n}}(t/N_{n})| \le \frac1{2 N_{n-1} \lVert t\rVert} \frac{\pi\lVert t\rVert}{2t} = \frac{\pi}{4 N_{n-1} t}.
\end{equation}
Since $\lVert t\rVert\le1/2$, the inequalities in \eqref{eq:DNn} also give
\begin{equation}\label{upp_DNDM}
    |{\mathsf D}_{N_{n-1}}(t)|\le1, \qquad |{\mathsf D}_{N_{n}}(t/N_n)|\le\frac{\pi}{4t}.
\end{equation}
For $ \xi \in \mathbb{R} $, write
\begin{equation}\label{def-e(xi)}
    e_{n-1}(\xi) = \mathsf M_{\mathcal B_{n-1}}(\xi) - {\mathsf D}_{N_{n-1}}(\xi), \qquad
e_n(\xi) = \mathsf M_{\mathcal B_n}(\xi) - {\mathsf D}_{N_n}(\xi),
\end{equation}
where, by \eqref{eq:eta}, we have
$ \lVert e_{n-1}\rVert_\infty = \eta_{n-1}, $ and $
\lVert e_n\rVert_\infty = \eta_n. $
Moreover, \eqref{eq:eta} implies
$ \eta_{n-1}
\le
N_{n-1}^{-(r_{n-1} - 1)/(2q_{n-1})} <1. $
Therefore, by using \eqref{mul_DMDN}, \eqref{upp_DNDM}, \eqref{def-e(xi)} and triangle inequality, we have
\begin{align*}
|\mathsf M_{\mathcal B_{n-1}}(t) \mathsf M_{\mathcal B_{n}}(t/N_n)|
&\le
|{\mathsf D}_{N_{n-1}}(t){\mathsf D}_{N_n}(t/N_n)|
+|e_{n-1}(t){\mathsf D}_{N_n}(t/N_n)|\\
&\quad
+|{\mathsf D}_{N_{n-1}}(t)e_n(t/N_n)|
+|e_{n-1}(t)e_n(t/N_n)|\\
&\le
\frac{\pi}{4N_{n-1}t}
+\frac{\pi\eta_{n-1}}{4t}
+\eta_n+\eta_{n-1}\eta_n\\
&\le
\frac{N_{n-1}^{-1}+\eta_{n-1}}{t}
+2\eta_n\\
&\le
2\left(
\frac{N_{n-1}^{-1}+\eta_{n-1}}{t}
+\eta_n
\right),
\end{align*}
where we used $\pi/4<1$ and $\eta_{n-1}\le1$.

If now $\lVert t\rVert=0$, then $t$ is a non-zero integer, and
Lemma~\ref{lem:Dbounds} gives ${\mathsf D}_{N_n}(t/N_n)=0$. Hence,
\[
|\mathsf M_{\mathcal B_{n-1}} (t)|
\le
|{\mathsf D}_{N_{n-1}}(t)|+\eta_{n-1}
\le2,
\qquad
| \mathsf M_{\mathcal B_{n}} (t/N_n)|
\le
\eta_n.
\]
It follows that
\[
|\mathsf M_{\mathcal B_{n-1}} (t) \mathsf M_{\mathcal B_{n}}(t/N_n)|
\le
2\eta_n
\le
2\left(
\frac{N_{n-1}^{-1}+\eta_{n-1}}{t}
+\eta_n
\right).
\]
This proves \eqref{eq:adjacent}.
\end{proof}

Fix $\xi\ge P_2$. Since $\{P_n\}_{n\ge 1}$ is strictly increasing
and tends to infinity, there is a unique $n\ge3$ such that
\begin{equation}\label{eq:blockchoice}
P_{n-1}\le\xi<P_n.
\end{equation}
Let
\begin{equation}\label{eq:PMNRt}
t=\dfrac{\xi}{P_{n-1}}.
  \end{equation}
Then $1\le t<N_n$.  By \eqref{eq:separation} with $n-1$,
\begin{equation}\label{eq:Nn-separation}
N_{n-1}>P_{n-2}^{n-1}.
\end{equation}
Since $P_{n-1}=P_{n-2}N_{n-1}$, multiplying both sides of
\eqref{eq:Nn-separation} by $N_{n-1}^{n-1}$ yields
\[
N_{n-1}^n
>
P_{n-2}^{n-1}N_{n-1}^{n-1}
=P_{n-1}^{n-1}.
\]
Equivalently,
\begin{equation}\label{eq:Nnminusone-separation}
N_{n-1}>P_{n-1}^{(n-1)/n}.
\end{equation}
Moreover, applying (\ref{eq:separation}) with $n+1$ in place of $n$, we obtain 
\[
N_{n+1}>P_n^{n+1}\ge N_n^{n+1}>N_n.
\]
Thus the sequence $\{N_n\}_{n\ge1}$ is strictly increasing.
Now we are ready to give the following upper bound estimates of the Fourier transform $ \hat{\mu}_s(\cdot) $.
\begin{proposition}\label{prop:blockwise}
For every $0<\gamma<s/2$, there exists a constant $C'_{s,\gamma}>0$ such that, for every $n\ge3$ and every $\xi$ satisfying $P_{n-1}\le\xi<P_n$, with $t=\xi/P_{n-1}$, we have
\begin{equation}\label{eq:blockwise}
|\wh\mu_s(\xi)|
\le C'_{s,\gamma}
\begin{cases}
P_{n-1}^{-\gamma(n-1)/n}t^{-1}+N_n^{-\gamma},
&1\le t\le N_n/2,\\
N_n^{-\gamma},
&N_n/2<t<N_n.
\end{cases}
\end{equation}
\end{proposition}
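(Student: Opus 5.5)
We bound $|\wh\mu_s(\xi)|$ by discarding all but two or three factors of the product \eqref{eq-FT}. Every mask satisfies $|\mathsf M_{\B_k}|\le1$, so keeping only the factors $k=n-1$ and $k=n$ gives
\[
|\wh\mu_s(\xi)|\le \bigl|\mathsf M_{\B_{n-1}}(\xi/P_{n-1})\,\mathsf M_{\B_n}(\xi/P_n)\bigr| = \bigl|\mathsf M_{\B_{n-1}}(t)\,\mathsf M_{\B_n}(t/N_n)\bigr|,
\]
since $\xi/P_{n-1}=t$ and $\xi/P_n=t/N_n$. The mask functions have integer digits, so $\mathsf M_{\B_{n-1}}$ is $1$-periodic. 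This means $\mathsf M_{\B_{n-1}}(t)$ depends only on the fractional part of $t$, and this is exactly what Lemma~\ref{lem:adjacent} exploits.

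\emph{Case $1\le t\le N_n/2$.} Lemma~\ref{lem:adjacent} gives
\[
|\wh\mu_s(\xi)|\le 2\Bigl(\frac{N_{n-1}^{-1}+\eta_{n-1}}{t}+\eta_n\Bigr).
\]
By Lemma~\ref{main-esti}(ii), $\eta_{n-1}\le A_{s,\gamma}N_{n-1}^{-\gamma}$ and $\eta_n\le A_{s,\gamma}N_n^{-\gamma}$. Since $\gamma<1$, we also have $N_{n-1}^{-1}\le N_{n-1}^{-\gamma}$. Combining, the numerator is at most $(1+A_{s,\gamma})N_{n-1}^{-\gamma}$. Then \eqref{eq:Nnminusone-separation}, $N_{n-1}>P_{n-1}^{(n-1)/n}$, turns $N_{n-1}^{-\gamma}$ into $P_{n-1}^{-\gamma(n-1)/n}$. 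This yields the first line of \eqref{eq:blockwise} with $C'_{s,\gamma}=2(1+A_{s,\gamma})$.

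\emph{Case $N_n/2<t<N_n$.} Lemma~\ref{lem:adjacent} no longer applies, because $t/N_n\in(1/2,1)$ lies near an integer, where $\mathsf D_{N_n}$ is not small. Here I would bring in a third factor, $k=n+1$, whose argument is $\xi/P_{n+1}=t/(N_nN_{n+1})=:u$. Then $u\in(1/(2N_{n+1}),1/N_{n+1})$, so $N_{n+1}u\in(1/2,1)$, and Lemma~\ref{lem:Dbounds} (or the explicit Dirichlet formula) gives
\[
|\mathsf D_{N_{n+1}}(u)|=\frac{|\sin(\pi N_{n+1}u)|}{N_{n+1}\sin(\pi u)}\le \frac{\pi(1-N_{n+1}u)}{2N_{n+1}u}\le \pi(1-N_{n+1}u).
\]
This bound is small only when $N_{n+1}u$ is close to $1$, that is, when $t$ is close to $N_n$.

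That observation suggests splitting according to $s':=N_n-t$.

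\emph{Subcase $s'$ small.} Use $|\wh\mu_s(\xi)|\le|\mathsf M_{\B_{n+1}}(u)|\le \pi s'/N_n+\eta_{n+1}$, which is bounded by a multiple of $N_n^{-\gamma}$ when $s'\le N_n^{1-\gamma}$. The $\eta_{n+1}$ term is harmless since $N_{n+1}>N_n$ and Lemma~\ref{main-esti}(ii) gives $\eta_{n+1}\le A_{s,\gamma}N_{n+1}^{-\gamma}\le A_{s,\gamma}N_n^{-\gamma}$.

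\emph{Subcase $s'$ large.} Go back to the pair $(n-1,n)$ and use the symmetry of the Dirichlet kernel under $t\mapsto N_n-t$. One has $|\mathsf D_{N_n}(t/N_n)|=|\mathsf D_{N_n}(1-s'/N_n)|=|\mathsf D_{N_n}(s'/N_n)|$, and $\|t\|=\|s'\|$ because $N_n$ is an integer. Hence the proof of Lemma~\ref{lem:adjacent} goes through verbatim with $s'$ in place of $t$, giving the bound $\lesssim N_{n-1}^{-\gamma}/s'+\eta_n$. For $s'\ge N_n^{1-\gamma}$ this is $\lesssim N_n^{-\gamma}$, using $N_{n-1}\ge1$.

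The main obstacle is this second case: choosing the threshold on $s'$ so that both subcases give $N_n^{-\gamma}$, and checking that the Weil errors $\eta_n,\eta_{n+1}$ stay controlled uniformly. If the threshold arithmetic fails, the fallback is to accept a slightly smaller exponent $\gamma'<\gamma$. Since $\gamma<s/2$ is arbitrary, this costs nothing after relabelling $\gamma$ and adjusting the constant $C'_{s,\gamma}$.
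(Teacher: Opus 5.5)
Your proof is correct. Case 1 is exactly the paper's argument, but Case 2 takes a different route.

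Write $u=N_n-t$ (your $s'$). For $u\ge1$ the paper uses $1$-periodicity and $\mathsf M_{\B_j}(-x)=\overline{\mathsf M_{\B_j}(x)}$ to get $|\mathsf M_{\B_{n-1}}(t)\mathsf M_{\B_n}(t/N_n)|=|\mathsf M_{\B_{n-1}}(u)\mathsf M_{\B_n}(u/N_n)|$. This lets it apply Lemma~\ref{lem:adjacent} directly, rather than re-running its proof with the reflected Dirichlet bounds as you do; both are fine. The paper then \emph{multiplies} the resulting bound $C(N_{n-1}^{-\gamma}u^{-1}+N_n^{-\gamma})$ by your $(n+1)$-th factor bound $C(u/N_n+N_{n+1}^{-\gamma})$. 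Each of the four cross terms is at most $N_n^{-\gamma}$; the key one is $u^{-1}\cdot u/N_n=N_n^{-1}$. The $(n+1)$-th factor alone is used only when $u<1$. So the paper needs no threshold on $u$, only $\gamma<1$.

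You instead use one estimate or the other, splitting at $u=N_n^{1-\gamma}$. This is a bit simpler and absorbs the $u<1$ case into your small subcase. However, it requires the two ranges to meet. In the large subcase you get $N_{n-1}^{-\gamma}/u\le N_n^{\gamma-1}$, and $N_n^{\gamma-1}\le N_n^{-\gamma}$ holds only when $\gamma\le 1/2$. You did not state this, but it is true here because $\gamma<s/2\le1/2$. So your threshold arithmetic does work, and the fallback of lowering the exponent is unnecessary.

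The trade-off is that your splitting depends on $\gamma\le1/2$, while the paper's multiplicative combination of three factors is robust up to $\gamma<1$.
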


\begin{proof}
Since $ 0< \gamma < s / 2$, by Lemma~\ref{lem:scaleconseq} (ii), there exists $A_{s,\gamma}\ge1$ such that for every $ j \ge 1 $,
\begin{equation}\label{eq:etaA}
\eta_j\le A_{s,\gamma}N_j^{-\gamma}.
\end{equation}

By \eqref{eq-FT},
\begin{equation}\label{infinite_prod}
    |\wh\mu_s(\xi)|= \prod_{j=1}^{\infty}
\left|\mathsf M_{\mathcal B_j}\left(\frac{\xi}{P_j}\right)\right|.
\end{equation}
Since each $|\mathsf M_{\mathcal B_j} (\xi)|\le 1$ for all $\xi\in\R$,  the full
Fourier product is bounded above by the product of any selected
collection of its factors. As $1\le t <N_n$, we now divide the proof into two cases as in (\ref{eq:blockwise}).

\medskip

\noindent \underline{\emph{Case 1: $1\le t\le N_n/2$:}} Since $\xi=P_{n-1}t$ and $P_n=P_{n-1}N_n$, it follows that
\[
\frac{\xi}{P_{n-1}}=t,
\qquad
\frac{\xi}{P_n}
=
\frac{P_{n-1}t}{P_{n-1}N_n}
=
\frac{t}{N_n}.
\]
By taking the $ (n-1) $-th and $ n $-th factors of the infinite product in \eqref{infinite_prod}, we have
\[
|\wh\mu_s(\xi)|
\le
\left|\mathsf M_{\mathcal B_{n-1}}(t)
\mathsf M_{\mathcal B_n}(t/N_n)\right|.
\]
Lemma~\ref{lem:adjacent} therefore gives
\[
|\wh\mu_s(\xi)|
\le 2\left( \frac{N_{n-1}^{-1}+\eta_{n-1}}{t} +\eta_n \right).
\]
By \eqref{eq:etaA}, we have
$\eta_{n-1}\le A_{s,\gamma}N_{n-1}^{-\gamma}$ and
$\eta_n\le A_{s,\gamma}N_n^{-\gamma}$. Note that
$0<\gamma<s/2\le1$ implies $\gamma <1$, we have
$N_{n-1}^{-1}\le N_{n-1}^{-\gamma}$. Thus 
\[
|\wh\mu_s(\xi)|
\le
2(1+A_{s,\gamma})N_{n-1}^{-\gamma}t^{-1}
+
2A_{s,\gamma}N_n^{-\gamma}.
\]
Define
\[
C_{s,\gamma}:=2(1+A_{s,\gamma}).
\]
Since $2A_{s,\gamma}\le C_{s,\gamma}$, it follows that
\begin{equation}\label{eq:adjacent-mask-bound}
    |\wh\mu_s(\xi)| \le C_{s,\gamma} \left(
    N_{n-1}^{-\gamma}t^{-1} + N_n^{-\gamma} \right).
\end{equation}
Raising both sides of \eqref{eq:Nnminusone-separation} to the power
$-\gamma$ yields
\[
N_{n-1}^{-\gamma}
\le
P_{n-1}^{-\gamma(n-1)/n}.
\]
Consequently, by combining this with \eqref{eq:adjacent-mask-bound}, we have
\[
|\wh\mu_s(\xi)|
\le
C_{s,\gamma}
\left(
P_{n-1}^{-\gamma(n-1)/n}t^{-1}
+
N_n^{-\gamma}
\right).
\]

\noindent\underline{\emph{Case 2: $N_n/2<t<N_n$:}}
Set
\[
u=N_n-t\in(0,N_n/2),
\qquad
a=\frac{t}{N_n}=1-\frac{u}{N_n}\in(1/2,1).
\]

Suppose first that $u\ge1$. Since $N_n$ is an integer and
$\mathsf M_{\mathcal B_{n-1}}(\cdot)$ is one-periodic,
\begin{equation}\label{eq:5.19}
    \mathsf M_{\mathcal B_{n-1}}(t)
     =
    \mathsf M_{\mathcal B_{n-1}}(N_n-u)
     =
    \mathsf M_{\mathcal B_{n-1}}(-u).
\end{equation}

Similarly,
\begin{equation}\label{eq:5.20}
    \mathsf M_{\mathcal B_n}(t/N_n)
     =
    \mathsf M_{\mathcal B_n}(1-u/N_n)
     =
    \mathsf M_{\mathcal B_n}(-u/N_n).
\end{equation}
By using \eqref{eq:5.19}, \eqref{eq:5.20} and the fact that $\mathsf M_{\mathcal B_j}(-\xi)
=\overline{\mathsf M_{\mathcal B_j}(\xi)}$ for every $\xi\in\mathbb R$ and $ j \ge 1 $,
we have
\begin{equation}\label{eq:reflected-adjacent-masks}
\left|\mathsf M_{\mathcal B_{n-1}}(t)
\mathsf M_{\mathcal B_n}(t/N_n)\right|
=
\left|\mathsf M_{\mathcal B_{n-1}}(u)
\mathsf M_{\mathcal B_n}(u/N_n)\right|.
\end{equation}

Since $1\le u<N_n/2$, Lemma~\ref{lem:adjacent} applies with $u$ in
place of $t$. Repeating the argument from Case~1 and using
\eqref{eq:reflected-adjacent-masks}, we obtain
\begin{equation}\label{eq:reflected-adjacent-mask-bound}
\left|\mathsf M_{\mathcal B_{n-1}}(t)
\mathsf M_{\mathcal B_n}(t/N_n)\right|
\le
C_{s,\gamma}
\left(
N_{n-1}^{-\gamma}u^{-1}
+
N_n^{-\gamma}
\right).
\end{equation}

Next we consider the $(n+1)$-th factor in the infinite product
\eqref{infinite_prod}. Note that
\[
\frac{\xi}{P_{n+1}}
=
\frac{P_{n-1}t}{P_{n-1}N_nN_{n+1}}
=
\frac{t}{N_nN_{n+1}}
=
\frac{a}{N_{n+1}}.
\]
Since $ 0 < a / N_{n+1} < 1 $, by \eqref{eq:dirichlet}, we have
\begin{equation}\label{eq:next-dirichlet-formula}
    \left|{\mathsf D}_{N_{n+1}}(a/N_{n+1})\right|
    = \frac{|\sin(\pi a)|}
{N_{n+1}|\sin(\pi a/N_{n+1})|}.
\end{equation}

Since $a\in(1/2,1)$ and $N_{n+1}\ge2$, we have
$0<\pi a/N_{n+1}<\pi/2$.  Using the bound $\sin x\ge 2x/\pi$ for $0\le x\le\pi/2$, we obtain
\begin{equation}\label{eq:5.24}
    N_{n+1}\sin(\pi a/N_{n+1})\ge2a\ge1.
\end{equation}
Moreover,
\begin{equation}\label{eq:5.25}
    |\sin(\pi a)| =\sin\left(\frac{\pi u}{N_n}\right) \le \frac{\pi u}{N_n}.
\end{equation}

Combining the above estimates \eqref{eq:5.24} and \eqref{eq:5.25}, it follows that
\begin{equation}\label{eq:next-dirichlet-bound}
    \left|{\mathsf D}_{N_{n+1}}(a/N_{n+1})\right|
    \le \frac{\pi u}{N_n}.
\end{equation}
Since
$\eta_{n+1}\le A_{s,\gamma}N_{n+1}^{-\gamma}$ and
$\max\{\pi,A_{s,\gamma}\}\le C_{s,\gamma}$,
\eqref{eq:next-dirichlet-bound} gives
\begin{equation}\label{eq:next-mask-bound}
\begin{aligned}
    \left|\mathsf M_{\mathcal B_{n+1}}(a/N_{n+1})\right|
    & \le \left|{\mathsf D}_{N_{n+1}}(a/N_{n+1})\right|+\eta_{n+1} \\
    & \le \frac{\pi u}{N_n} + A_{s,\gamma}N_{n+1}^{-\gamma} \\
    & \le C_{s,\gamma} \left( \frac{u}{N_n} + N_{n+1}^{-\gamma}
\right).
\end{aligned}
\end{equation}
By taking the $ (n - 1) $-th, $ n $-th and $ (n+1) $-th factors in
\eqref{infinite_prod}, we have 
\begin{equation}\label{eq:three-mask-product}
\begin{split}
|\wh\mu_s(\xi)|
&\le
\left|\mathsf M_{\mathcal B_{n-1}}(t)
\mathsf M_{\mathcal B_n}(t/N_n)
\mathsf M_{\mathcal B_{n+1}}(a/N_{n+1})\right|.
\end{split}
\end{equation}
Combining \eqref{eq:reflected-adjacent-mask-bound},
\eqref{eq:next-mask-bound}, and \eqref{eq:three-mask-product}, and expanding the product
yields
\[
|\wh\mu_s(\xi)|
\le
C_{s,\gamma}^2
\left(
\frac{N_{n-1}^{-\gamma}}{N_n}
+
\frac{N_{n-1}^{-\gamma}N_{n+1}^{-\gamma}}{u}
+
N_n^{-\gamma}\frac{u}{N_n}
+
N_n^{-\gamma}N_{n+1}^{-\gamma}
\right).
\]
We will show that each term on the right-hand side is bounded above by
$N_n^{-\gamma}$. The following estimates provide what we need:
\begin{enumerate}
    \item For the first term, since $N_{n-1}\geq 1$, $0<\gamma<1$, and
$N_n\geq 1$, it follows that
$
	\frac{N_{n-1}^{-\gamma}}{N_n}
	\leq N_n^{-1}
	\leq N_n^{-\gamma}.
$
\item  For the second term, since $u\geq 1$, $N_{n-1}\geq 1$, and
$N_{n+1}>N_n$, it follows that
$
	\frac{N_{n-1}^{-\gamma}N_{n+1}^{-\gamma}}{u}
	\leq N_{n+1}^{-\gamma}
	\leq N_n^{-\gamma}.
$
\item For the third term, since $u<N_n$, it follows that
	 $ N_n^{-\gamma}\frac{u}{N_n} \leq N_n^{-\gamma}.$
\item For the fourth term, since $N_{n+1}^{-\gamma}\leq 1$, it follows that $ N_n^{-\gamma}N_{n+1}^{-\gamma} \leq N_n^{-\gamma}. $
\end{enumerate}

Consequently,
\begin{equation}\label{eq:upper1}
	\left|\widehat{\mu}_s(\xi)\right|
	\leq 4C_{s,\gamma}^2 \cdot N_n^{-\gamma}.
\end{equation}

Finally, suppose that $0<u<1$. The preceding estimate
\eqref{eq:next-dirichlet-bound} remains valid and gives
\begin{equation}\label{eq:small-u-dirichlet-bound}
\left|{\mathsf D}_{N_{n+1}}(a/N_{n+1})\right|
\le \frac{\pi u}{N_n}
\le \frac{\pi}{N_n}.
\end{equation}
Taking only the $(n+1)$-th factor in \eqref{infinite_prod}, together with
\eqref{eq:etaA} and \eqref{eq:small-u-dirichlet-bound}, yields
\[
|\wh\mu_s(\xi)|
\le
\left|\mathsf M_{\mathcal B_{n+1}}(a/N_{n+1})\right|
\le
\frac{\pi}{N_n}
+
A_{s,\gamma}N_{n+1}^{-\gamma}.
\]
Because $\gamma<1$ and $N_{n+1}>N_n$, we have
\[
N_n^{-1}\le N_n^{-\gamma},
\qquad
N_{n+1}^{-\gamma}\le N_n^{-\gamma}.
\]
Therefore,
\begin{equation}\label{eq:upper2}
|\wh\mu_s(\xi)| \le (\pi+A_{s,\gamma}) N_n^{-\gamma}.
\end{equation}
Consequently, we take the maximum of the constants in (\ref{eq:upper1}) and (\ref{eq:upper2})  proves the second estimate in \eqref{eq:blockwise}. Taking the maximum among all the constants in case (1) and (2) defines our desired $C_{s,\gamma}'$ and hence  completes the whole proof. 
\end{proof}

Fix $0<\alpha<\gamma$.  We will see that the two terms in Proposition \ref{prop:blockwise} can be bounded by $|\xi|^{-\alpha}$ up to some positive constant for all $n$ is sufficiently large, thereby proving 
$\lvert \widehat{\mu}_s(\cdot)\rvert$ decays at a polynomial rate.

\medskip

\noindent{\bf Proof of Theorem \ref{main-theorem-detail} (1).}
Fix $0<\alpha<s/2$ and choose $\gamma$ such that
$\alpha< \gamma < s/2$. We can find $n_{0}\ge 3$ such that for all $n \ge n_{0}$, we have
\begin{equation}\label{eq:choice-of-large-n}
\frac{\gamma(n-1)}{n}\ge\alpha,
\qquad
n\left(\gamma-\alpha\right)\ge\alpha.
\end{equation}
Let $\xi\ge P_{n_0-1}$. Then there is a unique $n\ge n_0$ such that
$P_{n-1}\le\xi<P_n$.  Set
\[
t=\frac{\xi}{P_{n-1}}.
\]
Then $\xi=P_{n-1}t$ and $1\le t<N_n$. Since
$P_{n-1},t\ge1$ and $\alpha<1$, the first inequality of
\eqref{eq:choice-of-large-n} gives
\[
P_{n-1}^{-\gamma(n-1)/n}t^{-1}
\le
P_{n-1}^{-\alpha}t^{-\alpha}
=
\xi^{-\alpha}.
\]
Moreover, applying \eqref{eq:separation} (i.e. $N_n>P_{n-1}^n$) and the second inequality of
\eqref{eq:choice-of-large-n}, we have
\[
N_n^{\gamma-\alpha}
>
P_{n-1}^{n(\gamma-\alpha)}
\ge
P_{n-1}^\alpha.
\]
This gives 
\[
N_n^{-\gamma} = N_n^{-(\gamma-\alpha)} N_n^{-\alpha}
<P_{n-1}^{-\alpha}N_n^{-\alpha}
=P_n^{-\alpha}
\le\xi^{-\alpha}.
\]
Using Proposition~\ref{prop:blockwise}, we get
\begin{equation}\label{upper_bbd_mus}
    |\wh\mu_s(\xi)| \le 2 C_{s,\gamma}' \vert \xi \vert^{-\alpha}
\end{equation}
for every $\xi\ge P_{n_{0}-1}$.

Note that $\wh\mu_s(-\xi)=\overline{\wh\mu_s(\xi)}$ for $ \xi \in \mathbb{R} $, so the same estimate \eqref{upper_bbd_mus} also holds
for negative $\xi \le -P_{n_{0}-1}$. Thus,
$$ |\wh\mu_s(\xi)| \le 2 C_{s, \gamma}'|\xi|^{-\alpha}, $$
for all $ |\xi|\ge P_{n_{0}-1}$. It is now a standard argument to obtain that for some constant $D_{s,\alpha}>0$, we have 
\[
|\wh\mu_s(\xi)|
\le
D_{s,\alpha}(1+|\xi|)^{-\alpha},
\]
for all $ \xi \in \mathbb{R} $.
\qquad$\Box$

\medskip

In the remainder of this section, we will show that $K_s$ is a Salem set of dimension $s$, thereby establishing part~(2) of Theorem~\ref{main-theorem-detail}.

\noindent{\bf Proof of Theorem \ref{main-theorem-detail} (2).}
Set $ \rho_j = r_j / q_j $ for $ j \ge 1 $. By \eqref{eq:NM} and \eqref{def_Bn}, recall that $|\mathcal B_j|=M_j=p_{j}^{r_j}$ for $ j \ge 1 $. By \eqref{eq:NM} and \eqref{properties-rnqn}, we have
$\log M_j=\rho_j\log N_j$ for all $ j \ge 1 $, and $\rho_j\to s$. It follows
\[
\frac{\log ( M_1\cdots M_n )}{\log ( N_1 \cdots N_n )}
=
\frac{\sum_{j=1}^n\rho_j\log N_j}
     {\sum_{j=1}^n\log N_j}
\longrightarrow s,
\]
which follows from the classical Stolz-Ces\'aro theorem\footnote{Stolz-Ces\'aro theorem: Let $(a_n)$ and $(b_n)$ be real sequences such that $b_n$ is strictly increasing and $b_n\to+\infty$. Then $ \lim_{n\to\infty} \frac{a_{n+1}-a_n}{b_{n+1}-b_n} = L$ where $L\in\mathbb{R}\cup\{\pm\infty\}$ implies that $ \lim_{n\to\infty}\frac{a_n}{b_n}=L.$ We choose $a_n,b_n$ to be the partial sum in the numerator and denominator respectively.}. The level-$n$ construction covers $K_s$ by $ M_1\cdots M_n $ intervals,
each of length $ P_n^{-1} = (N_1\cdots N_n)^{-1} $. Hence, for every $u>s$,
\begin{align*}
    \mathcal H^u_{P_n^{-1}}(K_s)
&  \le (M_1\cdots M_n) (N_1\cdots N_n)^{-u} \\
&  = \exp\left[
\left(\frac{\log (M_1\cdots M_n)}{\log (N_1\cdots N_n)}-u\right)\log ( N_1\cdots N_n )
\right]
\longrightarrow0.
\end{align*}

It follows that $\dim_{\mathrm{H}}K_s\le s$. Let $\mu_s$ be the associated equal-weighted Cantor-Moran measure supported on $ K_s $ defined in (\ref{cantor-moran_measure}).
By part~(1) of Theorem~\ref{main-theorem-detail}, for every
$0<\alpha<s/2$ there exists $D_{s,\alpha} > 0 $ such that for $ \xi\in\mathbb R $,
\[
|\widehat{\mu_s}(\xi)|
\le D_{s,\alpha}(1+|\xi|)^{-\alpha}.
\]
Letting $ \alpha $ tend to $ s/2 $ gives $\dim_{\mathrm{F}}\mu_s\ge s$.
Since $\mu_s$ is supported on $K_s$ and Fourier dimension
does not exceed Hausdorff dimension (see e.g. \cite{falconer2004fractal}),
\[
s
\le\dim_{\mathrm{F}}\mu_s
\le\dim_{\mathrm{F}}K_s
\le\dim_{\mathrm{H}}K_s
\le s.
\]
It follows $K_s$ is a Salem set of dimension $s$.
\qquad$\Box$

\section{Proof of the spectral property}\label{proof_spectral}\label{Sec6}
This section is devoted to proving the spectrality of $\mu_s$,
namely part~(3) of Theorem~\ref{main-theorem-detail}.
Throughout this section, fix $0<s\le1$.
Recall that $q_n,r_n$ are defined in \eqref{eq:qrdef},
and that $d_n$ is defined in \eqref{eq:epsbeta}.
For each $n\ge1$, the prime $p_n$ is chosen recursively
as the least odd prime $p>d_n$ satisfying
\eqref{eq:scale1} and \eqref{eq:scale2}.
As in \eqref{eq:NM}, we set
\[
N_n=p_n^{q_n},\qquad
M_n=p_n^{r_n},\qquad
P_n=N_1\cdots N_n,
\]
and let $\mathcal B_n$ be the digit set defined in
\eqref{def_Bn}, with $|\mathcal B_n|=M_n$.
All these parameters and digit sets depend on the fixed
value of $s$. We omit this dependence in the notation. We denote by $\mu_s$ the associated equal-weighted Cantor--Moran measure defined by \eqref{cantor-moran_measure}. Recall in \eqref{def-Ln}, we have
$$
L_n = p_n^{q_n-r_n}\left\{-\frac{p_n^{r_n}-1}{2},-\frac{p_n^{r_n}-1}{2}+1,\cdots, -1,0,1\cdots,\frac{p_n^{r_n}-1}{2}\right\},
$$
for $ n \in \mathbb{N} $.
For convenience, set
\[
\mathcal K_n
=\left\{-\frac{p_n^{r_n}-1}{2},-\frac{p_n^{r_n}-1}{2}+1,\cdots, -1,0,1\cdots,\frac{p_n^{r_n}-1}{2}\right\},
\]
so that $L_n=p_n^{q_n-r_n}\mathcal K_n$.

\begin{proposition}\label{prop:block}
For all $n\in\N$, let $N_n = p_n^{q_n}$ as in (\ref{eq:NM}). Then  the matrix 
\[
H_n: = \frac1{\sqrt{p_n^{r_n}}}\left(e^{2\pi i\frac{b\ell}{N_n}}\right)_{b\in \B_n,\,\ell\in L_n}
\]
is unitary, that is, $H_n^{\ast}H_n=I$, where $H_n^{\ast}$ denotes the conjugate transpose of $H_n$.
\end{proposition}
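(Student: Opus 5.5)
Since $H_n$ is a square matrix of size $p_n^{r_n}$ (both $\B_n$ and $L_n$ have $p_n^{r_n}$ elements), it suffices to show that its rows are orthonormal, i.e.\ that for distinct $b,b'\in\B_n$,
\[
\sum_{\ell\in L_n} e^{2\pi i (b-b')\ell/N_n}=0 .
\]
Write $\ell=p_n^{q_n-r_n}k$ with $k\in\mathcal K_n$. Then $(b-b')\ell/N_n=(b-b')k/p_n^{r_n}$. Moreover $\mathcal K_n$ is a complete residue system modulo $p_n^{r_n}$, since it consists of $p_n^{r_n}$ consecutive integers (recall $p_n$ is odd). Hence the sum equals
\[
\sum_{k=0}^{p_n^{r_n}-1} e^{2\pi i (b-b')k/p_n^{r_n}},
\]
and this vanishes exactly when $b-b'\not\equiv 0 \pmod{p_n^{r_n}}$.

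The key step is therefore to check that the residues $b\bmod p_n^{r_n}$, for $b\in\B_n$, are pairwise distinct. By \eqref{eq:bdef},
\[
b_n(x)\equiv \sum_{j=0}^{r_n-1}c_j(x)p_n^j \pmod{p_n^{r_n}},
\]
because every term $\phi_j(x)p_n^{r_n+j-1}$ is divisible by $p_n^{r_n}$. The right-hand side lies in $\{0,\dots,p_n^{r_n}-1\}$ and is the base-$p_n$ integer with digits $C(x)=(c_0(x),\dots,c_{r_n-1}(x))$.

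Two facts now give injectivity of $x\mapsto b_n(x)\bmod p_n^{r_n}$. First, base-$p_n$ expansions are unique. Second, the coordinate map $C$ in \eqref{isomorphism} is a bijection. So distinct $x\ne x'$ give distinct residues, and in particular distinct elements $b\neq b'$ of $\B_n$ (which come from distinct $x$, $x'$ since $b_n$ is injective) satisfy $p_n^{r_n}\nmid b-b'$. This yields orthogonality of distinct rows.

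Each row has norm $1$ because of the normalization $1/\sqrt{p_n^{r_n}}$. Thus $H_nH_n^{\ast}=I$, and since $H_n$ is square, also $H_n^{\ast}H_n=I$.

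I expect no real obstacle here. The only points to handle carefully are that $\mathcal K_n$ is a full residue system, which uses that $p_n^{r_n}$ is odd so that $(p_n^{r_n}-1)/2$ is an integer, and that the low $r_n$ digits of $b_n(x)$ are exactly the coordinates $c_j(x)$.
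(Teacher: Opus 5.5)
Your proposal is correct and follows the paper's proof. Both arguments rest on two facts: the low $r_n$ base-$p_n$ digits of $b_n(x)$ are $C(x)$, so $b\bmod p_n^{r_n}$ runs through each residue exactly once, and $\mathcal K_n$ is a complete residue system modulo $p_n^{r_n}$. The paper then reorders $H_n$ into the normalized Fourier matrix of $\mathbb Z/p_n^{r_n}\mathbb Z$, whereas you verify row orthogonality directly via the geometric sum, which comes to the same thing.
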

\begin{proof}
Fixing $n\in \N$ throughout this proof. We first recall that 
$$
\B_n = \{b(x): x\in \F_{p_n^{r_n}}\}.
$$
By \eqref{eq:bdef}, the first \(r_n\) base-\(p_n\) digits of \(b(x)\) are \(c_0(x),\ldots,c_{r_n-1}(x)\). Hence
\begin{equation}\label{b(x)modu}
    b(x)\equiv \sum_{j=0}^{r_n-1}p_n^j c_j(x) \pmod{p_n^{r_n}},
\end{equation}
where each \(c_j(x) \in \{0,\ldots,p_n-1\}\). Since the map $
C:\mathbb F_{p_n^{r_n}} \to \mathbb F_{p_n}^{r_n} $ is bijective, the sum on the right in \eqref{b(x)modu} takes every value in \(\{0,\ldots,p_n^{r_n}-1\}\) exactly once. Thus \(b(x)\bmod p_n^{r_n}\) runs through every residue class exactly once as \(x\) ranges over \(\mathbb F_{p_n^{r_n}}\). In particular, $x\mapsto b(x)$ is injective and $|\B_n|=p_n^{r_n}$.

Every $\ell\in L_{n}$ has the form $\ell=p_{n}^{q_n-r_n}k$ with $k\in {\mathcal K}_n$. It follows that
\[
e^{2\pi i\frac{b(x)\ell}{p_n^{q_n}}}=e^{2\pi i\frac{b(x)k}{p_n^{r_n}}}=e^{2\pi i\frac{(b(x)\bmod p_{n}^{r_{n}})k}{p_n^{r_n}}}.
\]
As shown above, \(b(x)\bmod p_n^{r_n}\) runs through every residue class exactly once as \(x\) ranges over \(\mathbb F_{p_n^{r_n}}\). Moreover, \(\mathcal K_n\) is a complete residue system modulo \(p_n^{r_n}\). Therefore, after reordering its rows and columns, \(H_n\) becomes
\[
\frac{1}{\sqrt{p_n^{r_n}}}
\left(e^{2\pi i jk/p_n^{r_n}}\right)_{0\le j,k<p_n^{r_n}},
\]the normalized Fourier matrix of the cyclic group \(\mathbb Z/p_n^{r_n}\mathbb Z\). Consequently, \(H_n\) is unitary.
\end{proof}

Recall that we say that  $(N_n, \B_n, L_n)$ forms a Hadamard triple if the matrix $H_n$ is unitary. Given a system of Hadamard triples $(N_n,\B_n,L_n)$, it is well-known that 
$$
\Lambda_n=L_1+N_1L_2+\cdots+(N_1\cdots N_{n-1})L_n
$$
is a spectrum for the measure of the first $n$ convolutions of the Cantor-Moran measure
$$
\nu_n = \Conv_{ k = 1 }^{ n } \left( \sum_{ b \in \mathcal{B}_k } \dfrac{1}{ \#\B_k }\delta_{ b \cdot ( N_1 \cdots N_k )^{-1} } \right).
$$
We now write  our Cantor-Moran measure as,
$$
\mu_s = \nu_n\ast\nu_{>n},
$$
where $\nu_{>n}$ is the rest of the convolutions after the first $n$ terms. To pass the spectrum to the limit measure, we use the following theorem from Lai-Wang \cite[Theorem~3.3]{LaiWang}. Indeed, early version of this theorem was first observed by Strichartz \cite{Strichartz2000MockFourier} and later formulated rigorously in \cite{DHL2019} for self-affine measures. The following theorem generalized naturally to Moran measures and it also works more generally to produce Fourier frames, but we state only the special case for the sake of our proof.  

\begin{theorem}\cite[Theorem~1.3]{LaiWang}\label{LaiWang}
Let $(N_n,B_n,L_n)$ be a system of Hadamard triples, let $\mu$ be the associated infinite convolution, and let
\[
\Lambda_n=L_1+N_1L_2+\cdots+(N_1\cdots N_{n-1})L_n,
\qquad
\Lambda=\bigcup_{n\ge1}\Lambda_n.
\]
Suppose that for some $n_0\in \N$, 
\begin{equation}
\delta_{n_0}(\Lambda)
:=\inf_{n\ge n_0}\inf_{\lambda\in\Lambda_n}
\abs{\widehat{\nu_{>n}}(\lambda)}^2
>0,
\label{eq:delta-def}
\end{equation}
then $E(\Lambda)=\{e^{2\pi i\lambda x}:\lambda\in\Lambda\}$ is an orthonormal basis for $L^2(\mu)$.
\end{theorem}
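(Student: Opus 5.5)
The plan is to use the Jorgensen--Pedersen criterion. $E(\Lambda)$ is an orthonormal basis of $L^2(\mu)$ if and only if $Q(\xi):=\sum_{\lambda\in\Lambda}|\widehat\mu(\xi+\lambda)|^2\equiv1$, where the sum is over distinct points of $\Lambda$. I assume, as is standard for Hadamard triples and as holds for the $L_n$ in \eqref{def-Ln}, that $0\in L_n$ for every $n$. Then $\Lambda_n\subset\Lambda_{n+1}$ and $\Lambda$ is an increasing union.

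\emph{Step 1: orthogonality.} Any two distinct $\lambda,\lambda'\in\Lambda$ already lie in a common $\Lambda_n$. The Hadamard property makes $\Lambda_n$ a spectrum of $\nu_n$. For distinct $\lambda,\lambda'\in\Lambda_n$, the difference $\lambda-\lambda'$ is then a zero of $\widehat{\nu_n}$. Since $\widehat\mu=\widehat{\nu_n}\,\widehat{\nu_{>n}}$, it is also a zero of $\widehat\mu$. Hence $E(\Lambda)$ is orthonormal, and Bessel's inequality gives $Q\le1$ everywhere.

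\emph{Step 2: reduction to a uniform contraction.} Fix $\xi$. Since $\Lambda_n$ is a spectrum for $\nu_n$, we have $\sum_{\lambda\in\Lambda_n}|\widehat{\nu_n}(\xi+\lambda)|^2=1$. Therefore
\[
1-\sum_{\lambda\in\Lambda_n}|\widehat\mu(\xi+\lambda)|^2=\sum_{\lambda\in\Lambda_n}|\widehat{\nu_n}(\xi+\lambda)|^2\bigl(1-|\widehat{\nu_{>n}}(\xi+\lambda)|^2\bigr).
\]
It would suffice to show that the right side tends to $0$ as $n\to\infty$, along a subsequence. A direct lower bound only gives a constant, not $1$. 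The measure $\nu_{>n}$ is supported in an interval of length $O(P_n^{-1})$, so $\widehat{\nu_{>n}}$ is Lipschitz with constant $O(P_n^{-1})$. Together with \eqref{eq:delta-def}, this yields $|\widehat{\nu_{>n}}(\xi+\lambda)|^2\ge\delta_{n_0}(\Lambda)/2$ for $n$ large, depending on $\xi$ through $|\xi|/P_n$. Consequently $Q(\xi)\ge\delta_{n_0}(\Lambda)/2$: a uniform positive lower bound on compact sets, but not $Q=1$.

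\emph{Step 3: bootstrap through the tails.} I would bootstrap using the self-similar structure of the tails. For $n\ge0$, let $\mu^{(n)}$ be the infinite convolution built from the shifted system $(N_k,B_k,L_k)_{k>n}$, and let $\Lambda^{(n)}$ be its candidate spectrum. Then $\nu_{>n}(\cdot)=\mu^{(n)}(P_n\,\cdot)$, and $\Lambda=\Lambda_n+P_n\Lambda^{(n)}$. Hadamard orthogonality makes this decomposition direct. Writing $Q^{(n)}$ for the $Q$-function of $(\mu^{(n)},\Lambda^{(n)})$, the decomposition gives the identity
\[
1-Q(\xi)=\sum_{\lambda\in\Lambda_n}|\widehat{\nu_n}(\xi+\lambda)|^2\Bigl(1-Q^{(n)}\bigl((\xi+\lambda)/P_n\bigr)\Bigr).
\]
The hypothesis \eqref{eq:delta-def} is inherited by every tail system with the same constant $\delta:=\delta_{n_0}(\Lambda)$, because it is an infimum over all $n\ge n_0$. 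Step 2, applied to $\mu^{(n)}$, then gives $Q^{(n)}\ge\delta/2$ on a fixed neighborhood of the relevant points. Set $\varepsilon:=\sup_\xi(1-Q(\xi))$, and more generally take the supremum over all tails. The displayed identity should then give $\varepsilon\le(1-\delta/2)\varepsilon$, forcing $\varepsilon=0$ and $Q\equiv1$.

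The main obstacle is Step 3, specifically the uniformity. The lower bound from Step 2 only holds when the argument $(\xi+\lambda)/P_n$ stays in a region controlled by the Lipschitz estimate, whereas $\lambda\in\Lambda_n$ ranges up to size comparable to $P_n$. My first attempt would be the following. Take the inequality $|\widehat{\nu_{>n}}(\lambda)|^2\ge\delta$ at the points $\lambda\in\Lambda_n$ themselves, and pass to $\xi+\lambda$ by the Lipschitz bound with error $O(|\xi|/P_n)$, choosing $n$ large. This gives the contraction for $\xi$ in a bounded set. I would then use that $Q$ is the restriction of an entire function of exponential type to extend $Q\equiv1$ from a bounded interval to all of $\mathbb R$, which avoids needing the contraction uniformly in $\xi$.
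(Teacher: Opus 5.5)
The paper does not prove this statement; it imports it from Lai--Wang. So your argument has to stand on its own.

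Steps 1 and 2 are fine, but Step 3 has a genuine gap, and it is not the uniformity in $\xi$ that you flag. Your identity expresses $1-Q(\xi)$ as a convex combination of the values $1-Q^{(n)}\bigl((\xi+\lambda)/P_n\bigr)$, with weights $|\widehat{\nu_n}(\xi+\lambda)|^2$ summing to $1$. All you know about those values is two bounds: $1-Q^{(n)}\le\varepsilon$ (the definition of $\varepsilon$) and $1-Q^{(n)}\le 1-\delta/2$ (Step 2). Averaging just returns $1-Q\le\varepsilon$ and $1-Q\le 1-\delta/2$. Neither gives $1-Q\le(1-\delta/2)\varepsilon$. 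A multiplicative contraction would require $1-Q^{(n)}$ to be at most a fixed fraction of $\varepsilon$ on a set of definite weight, which is what you are trying to prove. The same objection applies verbatim on a bounded interval. So you never obtain $Q=1$ on an interval, and analytic continuation has nothing to extend.

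The missing idea is to use the hypothesis as an \emph{upper} bound on the weights rather than a lower bound on $Q$. Fix $\xi$. Your Lipschitz estimate gives $|\widehat{\nu_{>n}}(\xi+\lambda)|^2\ge\delta/2$ for all $\lambda\in\Lambda_n$ once $n$ is large. Hence $|\widehat{\nu_n}(\xi+\lambda)|^2\le\frac2\delta|\widehat\mu(\xi+\lambda)|^2$, and your Step 2 identity gives
\[
1-\sum_{\lambda\in\Lambda_n}|\widehat\mu(\xi+\lambda)|^2\le\frac2\delta\sum_{\lambda\in\Lambda}\mathbf 1_{\Lambda_n}(\lambda)\,|\widehat\mu(\xi+\lambda)|^2\bigl(1-|\widehat{\nu_{>n}}(\xi+\lambda)|^2\bigr).
\]
The summands are dominated by $|\widehat\mu(\xi+\lambda)|^2$, which is summable over $\Lambda$ by Bessel's inequality from Step 1. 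For each fixed $\lambda$ they tend to $0$, because $|1-\widehat{\nu_{>n}}(\eta)|\le 2\pi|\eta|/P_n$. By dominated convergence the right side tends to $0$. Since $\Lambda_n\uparrow\Lambda$, this gives $Q(\xi)=1$ for every $\xi$ directly. Step 3, the tail systems, and the analytic continuation are all unnecessary.

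An equally short alternative is the frame route. Write $\mu=\nu_n*\nu_{>n}$ and use Parseval for $\nu_n$ to get $\sum_{\lambda\in\Lambda}|\langle f,e_\lambda\rangle_\mu|^2\ge\delta\|f\|^2_{L^2(\mu)}$ for continuous $f$. Combined with orthonormality, this already forces completeness.
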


The condition \eqref{eq:delta-def} was studied in depth in \cite{AnFuLai2019} and \cite{Li-Miao-Wang} in different situations. In this paper, rather than applying the theory, we only need to directly check that \eqref{eq:delta-def} holds for our $\nu_{>n}$ and $\Lambda_n$. We first begin with a simple lemma.

\begin{lemma}\label{lambda/Pn}
For every $n\ge1$ and every $\lambda\in\Lambda_n$,
\begin{equation}
\abs{\frac{\lambda}{P_n}}<\frac12.
\label{eq:half-bound}
\end{equation}
\end{lemma}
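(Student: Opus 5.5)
The plan is to bound $|\lambda|$ directly from the digit structure of $\Lambda_n$, estimating the contribution of each level $L_k$ separately and summing. Every $\lambda\in\Lambda_n$ can be written as
\[
\lambda=\ell_1+P_1\ell_2+\cdots+P_{n-1}\ell_n,\qquad \ell_k\in L_k .
\]
By \eqref{def-Ln}, each $\ell_k=p_k^{q_k-r_k}j$ with $|j|\le (p_k^{r_k}-1)/2$. Hence
\[
|\ell_k|\le p_k^{q_k-r_k}\cdot\frac{p_k^{r_k}-1}{2}=\frac{N_k-p_k^{q_k-r_k}}{2}<\frac{N_k}{2}.
\]
A sharper integer form of this bound is $|\ell_k|\le (N_k-1)/2$, because $p_k^{q_k-r_k}\ge 1$.

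With this in hand, the triangle inequality gives
\[
|\lambda|\le \sum_{k=1}^n P_{k-1}\frac{N_k-1}{2}=\frac12\sum_{k=1}^n (P_k-P_{k-1}),
\]
using $P_{k-1}N_k=P_k$. The right-hand side telescopes to $\frac12(P_n-P_0)=\frac{P_n-1}{2}$. Since $P_0=1$, this gives $|\lambda|\le (P_n-1)/2<P_n/2$. Dividing by $P_n$ yields \eqref{eq:half-bound}.

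There is no real obstacle here. The only point to check carefully is that the bound on $|\ell_k|$ is exactly $(N_k-1)/2$ or smaller, so that the telescoping produces the strict inequality. This holds because $p_k$ is odd, so $p_k^{r_k}$ is odd and $\mathcal K_n$ is a symmetric set of consecutive integers. The inequality $p_k^{q_k-r_k}(p_k^{r_k}-1)\le p_k^{q_k}-1$ then follows from $p_k^{q_k-r_k}\ge1$.
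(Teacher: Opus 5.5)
Your proof is correct and matches the paper's argument: you bound each $|\ell_k|$ by $(N_k-1)/2$ using $p_k^{q_k-r_k}\ge 1$, telescope $\sum_{k=1}^n P_{k-1}(N_k-1)=P_n-1$, and divide by $P_n$ to get the strict inequality. The oddness of $p_k$ only ensures that $(p_k^{r_k}-1)/2$ is an integer; the inequality itself does not need it.
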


\begin{proof}

Let \(\lambda\in\Lambda_n\). Write $\lambda=\sum_{j=1}^{n}P_{j-1}\ell_j$ for
$ \ell_j\in L_j, 1 \le j \le n $,  where \(P_0=1\). For $ 1 \le j \le n $, by the definition of $L_j$,
\[
|\ell_j|
\le p_j^{q_j-r_j}\frac{p_j^{r_j}-1}{2}
=\frac{N_j-p_j^{q_j-r_j}}{2}
\le\frac{N_j-1}{2}.
\]Since \(P_j=P_{j-1}N_j\), the triangle inequality gives
\[
\begin{aligned}
|\lambda|
&\le\sum_{j=1}^{n}P_{j-1}|\ell_j|\\
&\le\frac12\sum_{j=1}^{n}P_{j-1}(N_j-1)\\
&=\frac12\sum_{j=1}^{n}(P_j-P_{j-1}) = \frac{P_n-1}{2}.
\end{aligned}
\]
Therefore, $$ \left|\frac{\lambda}{P_n}\right| \le\frac12\left(1 - \frac1{P_n}\right) <\frac12.$$ This completes the proof.
\end{proof}

We next estimate the Dirichlet masks and give a universal estimate for the later tail factor.

\begin{lemma}\label{DN(x/N)}
Let $ N $ be an integer with $ N \ge 2 $. For every $x \in \mathbb{R} $ with $\abs{x}\le1/2$, we have
\begin{equation}\label{eq:dirichlet-lower}
\abs{{\mathsf D}_N(x/N)}\ge\frac2\pi.
\end{equation}
\end{lemma}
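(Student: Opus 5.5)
The plan is to use the closed form \eqref{eq:dirichlet} directly. If $x=0$, then ${\mathsf D}_N(0)=1\ge 2/\pi$ and there is nothing to prove. So assume $0<\abs{x}\le 1/2$. Since $N\ge2$, we have $x/N\notin\mathbb Z$, and \eqref{eq:dirichlet} gives
\[
\abs{{\mathsf D}_N(x/N)}=\frac{\abs{\sin(\pi x)}}{N\abs{\sin(\pi x/N)}}.
\]
Both sine factors are odd, so the ratio is even in $x$. We may therefore take $0<x\le 1/2$, in which case both $\pi x$ and $\pi x/N$ lie in $(0,\pi/2]$ and both sines are positive.

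Next, bound the two factors separately. For the numerator, use Jordan's inequality $\sin y\ge (2/\pi)y$ on $[0,\pi/2]$, the same inequality already used in Lemma~\ref{lem:geometric-sum} and \eqref{sin_ineq}. This gives $\sin(\pi x)\ge 2x$. For the denominator, use $\sin y\le y$ for $y\ge0$, which gives $N\sin(\pi x/N)\le \pi x$. Combining the two,
\[
\abs{{\mathsf D}_N(x/N)}\ge \frac{2x}{\pi x}=\frac{2}{\pi}.
\]

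There is no real obstacle here. The only points needing care are the degenerate case $x=0$, which must be handled separately because the closed form does not apply there, and checking that $\pi x\le\pi/2$ so that Jordan's inequality is valid. The latter is exactly where the hypothesis $\abs{x}\le1/2$ is used. The hypothesis $N\ge2$ only guarantees that $x/N\notin\mathbb Z$ for $x\neq0$, and the bound itself is uniform in $N$. This uniformity is what makes the lemma usable for the tail factors $\nu_{>n}$ at the points $\lambda/P_n$, which satisfy $\abs{\lambda/P_n}<1/2$ by Lemma~\ref{lambda/Pn}.
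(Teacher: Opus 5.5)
Your proof is correct and follows essentially the same route as the paper: both use the closed form of $\mathsf D_N$ and bound the denominator by $N\sin(\pi x/N)\le \pi x$. Both then bound $\sin(\pi x)/(\pi x)$ below by $2/\pi$ on $(0,1/2]$; the paper phrases this as monotonicity of $\sin(\pi x)/(\pi x)$, and your use of Jordan's inequality is the same fact. (A minor aside: $x/N\notin\mathbb Z$ already follows from $0<|x|\le 1/2$, so the hypothesis $N\ge 2$ is not really what is needed at that point.)
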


\begin{proof}
The lemma is trivial when $x =0$ where ${\mathsf D}_N(0)=1$. For $x\ne0$, by using the second equation of \eqref{eq:dirichlet} and the fact that $\abs{\sin y}\le\abs{y}$ holds for all $ y \in \mathbb{R} $, we have
\[
\abs{{\mathsf D}_N(x/N)}
=\frac{\abs{\sin(\pi x)}}{N\abs{\sin(\pi x/N)}}\ge \frac{\abs{\sin(\pi x)}}{\pi\abs{x}}.
\]
Note that the function $ x \to \sin(\pi x)/(\pi x)$ is decreasing when $ x \in [0,1/2] $, so its minimum is $2/\pi$ when $ x = 1/2 $. Therefore, \eqref{eq:dirichlet-lower} holds.
\end{proof}

\begin{lemma}
Let $ N $ be an integer with $ N \ge 2 $. Let $\B \subset\{0,1,\dots,N-1\}$ be nonempty and
\[
{\mathsf M}_{\B}(\xi)=\frac1{|\B|}\sum_{b\in \B}e^{-2\pi ib\xi}.
\]
If $\abs{x}\le1/2$, then
\begin{equation}
\abs{{\mathsf M}_\B(x/N)}\ge\cos(\pi \vert x \vert),
\label{eq:cos-bound}
\end{equation}
and consequently
\begin{equation}
1-\abs{{\mathsf M}_\B(x/N)}^2\le\pi^2x^2.
\label{eq:square-bound}
\end{equation}
\end{lemma}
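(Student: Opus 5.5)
The idea is to bound the modulus from below by its real part after a suitable phase rotation, and then use that the phases $2\pi b x/N$ all lie in a short arc. Write $\theta_b = 2\pi b x/N$ for $b\in\B$. Since $0\le b\le N-1$ and $|x|\le 1/2$, every $\theta_b$ lies between $0$ and $2\pi (N-1)x/N$. This interval has length strictly less than $2\pi|x|\le\pi$.

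Let $c=\pi(N-1)x/N$ be the midpoint of that interval, so that $|\theta_b-c|\le \pi(N-1)|x|/N\le \pi|x|\le \pi/2$. We then estimate
\[
\abs{{\mathsf M}_\B(x/N)}=\abs{e^{ic}{\mathsf M}_\B(x/N)}\ge \operatorname{Re}\Bigl(\frac1{|\B|}\sum_{b\in\B}e^{-i(\theta_b-c)}\Bigr)=\frac1{|\B|}\sum_{b\in\B}\cos(\theta_b-c).
\]
Cosine is even and decreasing on $[0,\pi/2]$, and $|\theta_b-c|\le\pi|x|\le\pi/2$. Hence each term satisfies $\cos(\theta_b-c)\ge\cos(\pi|x|)\ge 0$, and averaging gives \eqref{eq:cos-bound}. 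The only care needed is checking that the half-length of the arc is at most $\pi|x|$, which holds because $(N-1)/N<1$.

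For \eqref{eq:square-bound}, the right side of \eqref{eq:cos-bound} is nonnegative when $|x|\le1/2$, so we may square it to get
\[
1-\abs{{\mathsf M}_\B(x/N)}^2\le 1-\cos^2(\pi x)=\sin^2(\pi x)\le \pi^2x^2,
\]
using $|\sin y|\le|y|$. No step here is a real obstacle. The one point to watch is the centering: without rotating by $c$ one only gets the weaker bound $\cos(2\pi|x|)$, which can be negative.
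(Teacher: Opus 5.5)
Your proof is correct and is essentially the paper's argument. The paper multiplies by $e^{\pi i x}$, which centers at $\pi x$ rather than your exact midpoint $\pi(N-1)x/N$; it notes that each phase $\pi x(1-2b/N)$ has absolute value at most $\pi|x|\le\pi/2$, so every term has real part at least $\cos(\pi|x|)$, and then squares exactly as you do.
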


\begin{proof}
Multiplying every summand in ${\mathsf M}_{\B}(x/N)$ by the same  $e^{\pi ix}$,  we obtain
\begin{equation}\label{exponential-sum}
    \left|{\mathsf M}_\B \left( \frac{x}{N} \right)\right| =\frac{1}{|\B|}\left|  \sum_{b\in \B} e^{\pi ix (1-\frac{2b}{N})}\right|.
\end{equation}

Since \(0\le b\le N-1\) and \(\pi|x|\le\pi/2\), each exponential on the right-hand side of \(\eqref{exponential-sum}\) has real part at least \(\cos(\pi|x|)\).  The same is true for their average, proving \eqref{eq:cos-bound}.  Then
\[
1-\abs{{\mathsf M}_\B(x/N)}^2
\le1-\cos^2(\pi \vert x \vert)
=\sin^2(\pi \vert x \vert)
\le\pi^2x^2.
\]
\end{proof}

\noindent{\bf Proof of Theorem \ref{main-theorem-detail} (3).} We have seen that the family $ \{e^{2\pi i\lambda x}:\lambda\in\Lambda\} $ is orthonormal in \(L^2(\mu_s)\). Our proof will be complete if we can show that  there exist $n_0\ge1$ and $c_*>0$ such that for any $ n \ge n_0, \lambda\in\Lambda_n $,
\begin{equation}
\abs{\widehat{\nu_{>n}}(\lambda)}^2\ge c_*.
\label{eq:tail-lower}
\end{equation}
Consequently, $ \delta_{n_0}(\Lambda)>0.$ Hence, we can use Theorem \ref{LaiWang} to conclude that $\mu_s$ is spectral.  

Since $\eta_j\to0$, choose $n_0$ sufficiently large such that for $ j\ge n_0+1 $,
\begin{equation}\label{etapi}
     \eta_j\le\frac1\pi.
\end{equation}
To justify \eqref{eq:tail-lower}, we fix $ n \ge n_0 $ and $\lambda\in\Lambda_n$, and set
$ t=\frac{\lambda}{ N_1 \cdots N_n}. $ By Lemma \ref{lambda/Pn}, it follows $\abs{t}<1/2$. 
Then by applying Lemma \ref{DN(x/N)} with $ N = N_{n+1} $ and $ x = t $, we have
\begin{equation}\label{DN(n+1)_esti}
    \abs{{\mathsf D}_{N_{n+1}}\left(\frac{t}{N_{n+1}}\right)}\ge\frac2\pi.
\end{equation}
Combining \eqref{etapi} with \eqref{DN(n+1)_esti} and using the definition of $ \eta_n $ in \eqref{etan-def}, for $n\ge n_0$, we have
\begin{equation}
\abs{{\mathsf M}_{\B_{n+1}}\left(\frac{t}{N_{n+1}}\right)}
\ge \abs{{\mathsf D}_{N_{n+1}}\left(\frac{t}{N_{n+1}}\right)} - \eta_{n+1}
\ge\frac2\pi-\eta_{n+1}
\ge\frac1\pi.
\label{eq:first-tail}
\end{equation}
For $r\ge1$, define $ t_r= t / (N_{n+1}\cdots N_{n+r}). $
Recall Lemma \ref{main-esti} (iii), we have $N_j\ge27$ for $ j \ge 1 $. By using this together with $ \vert t \vert < 1 /2 $,
\begin{equation}
\abs{t_r}\le\frac1{2\cdot27^r}.
\label{eq:tr}
\end{equation}
Applying \eqref{eq:square-bound} and \eqref{eq:tr} gives
\begin{equation}
\abs{{\mathsf M}_{\B_{n+r+1}}\left(\frac{t_r}{N_{n+r+1}}\right)}^2
\ge1-\frac{\pi^2}{4\cdot27^{2r}}.
\label{eq:later-tail}
\end{equation}
Putting the estimates \eqref{eq:first-tail}, and \eqref{eq:later-tail} into the definition of $\widehat{\nu_{>n}}$ gives,
\begin{align*}
\abs{\widehat{\nu_{>n}}(\lambda)}^2
&=\prod_{j=n+1}^{\infty}
\abs{\mathsf M_{\mathcal B_j} \left(\frac{\lambda}{N_1 \cdots N_j}\right)}^2\\
& = \abs{{\mathsf M}_{\B_{n+1}}\left(\frac{t}{N_{n+1}}\right)}^2\cdot\prod_{r=1}^{\infty}\abs{{\mathsf M}_{\B_{n+r+1}}\left(\frac{t}{ N_{n+1} \cdots N_{n+r+1}}\right)}^2 \\
& =\abs{{\mathsf M}_{\B_{n+1}}\left(\frac{t}{N_{n+1}}\right)}^2\cdot\prod_{r=1}^{\infty}\abs{{\mathsf M}_{\B_{n+r+1}}\left(\frac{t_r}{N_{n+r+1}}\right)}^2 \quad (\textup{By the definition of } t_r)
\\
&\ge\frac1{\pi^2}
\prod_{r=1}^{\infty}
\left(1-\frac{\pi^2}{4\cdot27^{2r}}\right).
\end{align*}
Set
\begin{equation}\label{c*-def}
c_*:=\frac1{\pi^2}
\prod_{r=1}^{\infty}
\left(1-\frac{\pi^2}{4\cdot27^{2r}}\right).
\end{equation}
As every factor in \eqref{c*-def} is positive, and $\sum_{r=1}^{\infty}\frac{\pi^2}{4\cdot27^{2r}}<\infty, $
 the infinite product in \eqref{c*-def} is strictly positive.  Thus \eqref{eq:tail-lower} holds with a constant $c_*>0$ independent of $n\ge n_0$ and $\lambda\in\Lambda_n$. Consequently, $\delta_{n_0}(\Lambda)>0$. The proof is complete. 
\qquad$\Box$

\bibliographystyle{alpha}
\bibliography{universal_bib}

\end{document}